\journal{Journal of Computational Physics}
\newtheorem{thm}{Theorem}[section]
\newtheorem{rem}[thm]{Remark}
\newcommand{\Rr}{\mathbb R}
\newcommand{\bu}{{\bf u}}
\newcommand{\bv}{{\bf v}}
\newcommand{\be}{\begin{equation}}
\newcommand{\ee}{\end{equation}}
\def\Xint#1{\mathchoice
{\XXint\displaystyle\textstyle{#1}}%
{\XXint\textstyle\scriptstyle{#1}}%
{\XXint\scriptstyle\scriptscriptstyle{#1}}%
{\XXint\scriptscriptstyle\scriptscriptstyle{#1}}%
\!\int}
\def\XXint#1#2#3{{\setbox0=\hbox{$#1{#2#3}{\int}$}
\vcenter{\hbox{$#2#3$}}\kern-.5\wd0}}
\def\dashint{\Xint-}
\numberwithin{equation}{section}
\begin{document}

\begin{frontmatter}

\title{In-cell Discontinuous Reconstruction path-conservative methods for non conservative hyperbolic systems - Second-order extension}


\author[uma]{Ernesto Pimentel-Garc\'ia\corref{mycorrespondingauthor}}
\cortext[mycorrespondingauthor]{Corresponding author}
\ead{erpigar@uma.es}

\author[uma]{Manuel J. Castro}
\ead{mjcastro@uma.es}

\author[versailles]{Christophe Chalons}
\ead{christophe.chalons@uvsq.fr}

\author[uco]{Tom\'as Morales de Luna}
\ead{tomas.morales@uco.es}

\author[uma]{Carlos Par\'es}
\ead{pares@uma.es}

\address[uma]{Departamento  de  An\'alisis  Matem\'atico,  Estad\'istica  e  Investigaci\'on  Operativa,  y  Matem\'atica  aplicada,  Universidad  de  M\'alaga,Bulevar Louis Pasteur, 31, 29010, M\'alaga, Spain. }
\address[versailles]{Laboratoire de Math\'ematiques de Versailles, UVSQ, CNRS, Universit\'e Paris-Saclay, 78035 Versailles, France.}
\address[uco]{Departamento de Matem\'aticas, Universidad de C\'ordoba, Campus de Rabanales, 14071,C\'ordoba, Spain.}

\begin{abstract}
We are interested in the numerical approximation of discontinuous solutions in non conservative hyperbolic systems. An extension to second-order of a new strategy based on in-cell discontinuous reconstructions to deal with this challenging topic is presented. This extension is based on the combination of the first-order in-cell reconstruction and the MUSCL-Hancock reconstruction.  The first-order strategy allowed in particular to capture exactly the isolated shocks and this new second-order extension keep this property. We also set the basis of an extension to high-order methods following this in-cell methodology and a way for capturing exactly more than one shock.  Several numerical tests are proposed to validate the methods for the Coupled-Burgers system, Gas dynamics equations in Lagrangian coordinates and the modified shallow water system.
\end{abstract}

\begin{keyword}
In-cell reconstruction, path-conservative methods, shock-capturing methods, finite volume methods, MUSCL-Hancock, nonconservative hyperbolic systems.
\end{keyword}

\end{frontmatter}

\linenumbers

\section{Introduction}\label{S:intro}

  We consider  first
order quasi-linear PDE systems
\begin{equation}\label{sys:nonconservative}
\partial_t {\bf u} + {\mathcal{A}}({\bf u}) \partial_x {\bf u} = 0, \quad x \in \Rr, \quad t \in \Rr^{+},
\end{equation}
in which the unknown $\bu(x,t)$ takes values in an open convex set
$\Omega$ of $\mathbb{R}^N$, and
$\mathcal{A}(\bf u)$ is a smooth locally bounded map from $\Omega$ to $\mathcal{M}_{N\times
N}(\mathbb{R})$. The system is
supposed to be strictly hyperbolic and the characteristic fields
$R_i(\bu)$, $i=1,\ldots ,N$, are supposed to be either genuinely
nonlinear:
$$
\nabla \lambda _i(\bu) \cdot R_i(\bu) \neq 0, \quad \forall \,
\bu\in\Omega,
$$
or linearly degenerate:
$$
\nabla \lambda _i(\bu) \cdot R_i(\bu) = 0, \quad \forall \, \bu \in \Omega.
$$
Here, $\lambda_1(\bu),\ldots,\lambda_N(\bu)$ represent the eigenvalues
of $\mathcal{A}(\bu)$ (in increasing order) and $R_1(\bu), \ldots,
R_N(\bu)$ a set of associated eigenvectors.

Under some hypotheses of regularity for $\mathcal{A}(\bu)$, the
theory introduced by Dal Maso, LeFloch, and Murat
\cite{Maso1995} allows one to define  the nonconservative product
$\mathcal{A}(\bu)\,\partial_x \bu$  as a bounded measure for functions $\bu$ with bounded variation.
To do this,  a family of Lipschitz continuous paths $\Phi: [0,1]
\times \Omega \times \Omega \to \Omega$ has to be prescribed, which must
satisfy certain regularity and  compatibility conditions, in particular
\begin{equation} \label{cond1}
\Phi(0;\bu_l, \bu_r) = \bu_l, \qquad  \Phi(1;\bu_l, \bu_r) = \bu_r,\quad \forall \bu_l, \bu_r \in \Omega,
\end{equation}
and
\begin{equation} \label{cond2}
\Phi(s; \bu, \bu) = \bu, \quad \forall \bu \in \Omega.
\end{equation}
The interested reader
is addressed to \cite{Maso1995}  for a rigorous and complete
presentation of this theory. 
 The family of paths can be understood as a tool to
give a sense to integrals of the form
$$
\int_a^b \mathcal{A}(\bu(x))\,\partial_x \bu(x) \, dx,
$$
for functions $\bu$ with jump discontinuities. More precisely,
given a bounded variation function $\bu:[a,b] \mapsto \Omega$, we define:
\begin{equation}\label{wint}
\dashint_a^b \mathcal{A}(\bu(x))\,\partial_x \bu(x) \, dx =   \int_a^b \mathcal{A}(\bu(x))\,\partial_x \bu(x) \,dx  
+ \sum_m \int_0^1\mathcal{A}(\Phi(s;\bu_m^-,\bu_m^+))\frac{\partial\Phi}{\partial
s}(s;\bu_m^-,\bu_m^+)\,ds.
\end{equation}
In this definition, $\bu_m^-$ and $\bu_m^+$ represent, respectively,  the
limits of $\bu$ to the left and right of its $m$th discontinuity. Observe that, in \eqref{wint}, the family
of paths has been used to determine the Dirac measures placed at
the discontinuities of $\bu$.

If such a mathematical definition of the nonconservative products is assumed to define the concept of weak solution,  the generalized Rankine-Hugoniot condition:
\begin{equation} \label{gen_R-H}
 \int_0^1 \mathcal{A}(\Phi(s;\bu^-\bu^+))
\frac{\partial\Phi}{\partial s}(s;\bu^-,\bu^+)\, ds  = \sigma (\bu^+ - \bu^-)
\end{equation}
has to be satisfied across an admissible discontinuity. Here,
$\sigma $ is the speed of propagation of the discontinuity, and
$\bu^-$ and $\bu^+$ are the left and right limits of the solution at
the discontinuity. 

Once the family of paths has been prescribed, a concept of entropy
is required, as it happens for systems of conservation laws, that may be given by an entropy pair  or by Lax's entropy criterion. 

Since the concept of weak solution depends on the family
of paths, which is a priori arbitrary, the crucial question is
how to choose the 'good' family of paths. In fact, when the
hyperbolic system is the vanishing-viscosity limit of the
parabolic problems
\begin{equation}
\partial_t \bu^\eps + \mathcal{A}(\bu^\eps)\,\partial_x \bu^\eps = \eps
(\mathcal{R}(\bu^\eps)\partial_x \bu^\eps)_x, \label{NC-p}
\end{equation}
where $\mathcal{R}(\bu)$ is any positive-definite matrix, 
the adequate family of paths should be related to the \textit{viscous
profiles}: a function $\bv$ is said to be
a viscous profile for \eqref{NC-p} linking the states $\bu^-$ and  $\bu^+$
if it satisfies
\begin{equation} \label{vpbc}
\lim_{\chi \to -\infty} \bv(\chi ) = \bu^-, \quad  \lim_{\chi \to
+\infty} \bv(\chi ) = \bu^+,  \quad \lim_{\chi \to \pm \infty} \bv'(\chi) = 0
\end{equation}
and there exists $\sigma \in \mathbb{R}$ such that the travelling wave
\begin{equation}
\label{vprofile} \bu^\eps(x,t) = \bv\left( \frac{x - \sigma
t}{\eps}\right),
\end{equation}
is a solution of \eqref{NC-p} for every $\epsilon$.
It can be easily
verified that, in order to be a viscous profile, $\bv$ has to solve the equation
\begin{equation} \label{NC-vpec}
-\xi \bv' + \mathcal{A}(\bv)\,\bv' = (\mathcal{R}(\bv)\,\bv')',
\end{equation}
with boundary conditions \eqref{vpbc}.
If there exists a viscous profile linking the states $\bu^-$ and $\bu^+$, the good choice for the path connecting the
states would be, after a reparameterization, the
viscous profile $\bv$. 

 The main difference with the conservative case is
that now every choice of viscous term $\mathcal{R}$ leads to
different  jump conditions,
while for standard conservative systems the usual Rankine-Hugoniot conditions
are always recovered independently of the choice of
the viscous term.

The definition of numerical methods for system \eqref{sys:nonconservative} that converge to the correct weak solutions is not a simple task. It is well known that,  although Lax's equivalence Theorem ensures that consistency and stability implies convergence for linear systems and methods, this is not the case in general for nonlinear problems. For instance, in the case of systems of conservation laws,  stable conservative methods may converge to solutions that are not admissible weak solutions: this is the case for Roe method that may converge to weak solutions that are not entropy solutions. In order to ensure the convergence to the right weak solutions, besides consistency and stability, entropy has to be well controlled: for instance, entropy-fix techniques have to be added to Roe method (see for example \cite{Harte83}). In the case of nonconservative systems, consistency, stability, and control of the entropy are not enough: the numerical viscosity and, in general, the numerical dissipation effects, have to be well-controlled as well (see \cite{leflochmishra} for a review on this topic).

The design of  finite-difference or finite-volume methods satisfying these four properties is difficult in general. Nevertheless,  different techniques have been introduced to overcome, at least partially, this convergence issue: \cite{Berthon},
\cite{BLFMP}, \cite{audebert2006hybrid}, \cite{berthon2002nonlinear}, \cite{castro2013entropy}, \cite{chalons2005navier}, \cite{chalons2017new}, \cite{fjordholm2012accurate}, \cite{pares2006numerical}, \cite{chalons2019path}.  In particular the path-conservative entropy stable methods introduced in \cite{castro2013entropy} and extended to DG high-order methods in \cite{hiltebrand} significantly reduce the convergence error: to do this, entropy-conservative numerical methods are first introduced that are stabilized by means of a discretization of the viscous term of the regularized equation \eqref{NC-p}. 

More recently, in \cite{chalons2019path}, an in-cell discontinuous reconstruction technique has been added to first-order path-conservative methods that allows one to  capture correctly  weak solutions with isolated shock waves. 

The goal of this article is to extend the in-cell discontinuous reconstruction methods introduced in \cite{chalons2019path} to second-order accuracy. To do this, these numerical methods will be first  written as high-order path-conservative schemes (see for example \cite{castro2006,castro2017well}) and then, depending of the smoothness of the numerical solution, a standard MUSCL-Hancock reconstruction (see \cite{van1974towards} and \cite{van1984relation}) or a discontinuous one is used in the cell to update the numerical solution. 
 
 The paper is organized as follows: In Section 2 a brief introduction to path-conservative methods is given, then in Section 3  the new family of second-order in-cell discontinuous reconstruction methods is presented. First the semi-discrete method is introduced including the description of the reconstructions in the cells; then a temporal discretization based on a second order Taylor development is introduced. The shock-capturing property of the method is then enunciated and proved. Section 4 is devoted to show numerically the efficiency of the proposed numerical scheme. More precisely, first the Coupled-Burgers nonconservative system introduced as a toy problem in
 \cite{CMP} is considered: the application of the method to this system is described and several numerical tests are shown to validate the methods.
 Next, we focus on the Gas dynamics equations in Lagrangian coordinates and the modified shallow water  system introduced in \cite{Castro2008}. These systems were used in \cite{Abgrall2010} and \cite{Castro2008} respectively to  illustrate the convergence issue of path-conservative methods when small-scale effects are not controlled: the method proposed in this paper is applied to these system to put on evidence that the convergence issue is corrected.

\section{Path-conservative methods}
According to \cite{pares2006numerical}, a numerical method for solving \eqref{sys:nonconservative} is said to be path-conservative if  it can be written in the form
\begin{equation}\label{scheme}
\bu_j^{n+1}=\bu_j^n-\frac{\Delta t}{\Delta x}\big(\mathcal{D}_{j-1/2}^{+} +
\mathcal{D}_{j+1/2}^{-}\big),
\end{equation}
where the following notation is used: 
\begin{itemize}

\item $\Delta x$ and $\Delta t$ are the space and time steps respectively. They are supposed to be constant for simplicity. 

\item  $I_j=\left[x_{j-\frac{1}{2}},x_{j+\frac{1}{2}}\right]$ are the computational cells, whose length  is $\Delta x$.

\item $t_n = n \Delta t $, $n = 0, 1 \dots$.

\item $\bu_j^n$ is the approximation of the average of the exact solution at the $j$th cell at time $t_n$, that is, 
\be
\bu_j^n \approx \frac{1}{\Delta x} \int_{x_{j-\frac{1}{2}}}^{x_{j+\frac{1}{2}}}  \bu(x,t_n) \, dr. 
\ee
\item Finally,
$$\mathcal{D}_{j+1/2}^{\pm}=\mathcal{D}^{\pm}\big(\bu_{j}^n,\bu_{j+1}^n \big), $$
where
$\mathcal{D}^-$ and $\mathcal{D}^+$ two Lipschitz continuous functions from $\Omega \times \Omega$ to $\Omega$ that satisfy
\begin{equation}\label{pc1}
\mathcal{D}^{\pm}(\bu,\bu)=0, \quad \forall \bu\in \Omega,
\end{equation}
and
\begin{equation}\label{pc2}
\mathcal{D}^-(\bu_l,\bu_r) + \mathcal{D}^+(\bu_l, \bu_r) = \int _0^1 \mathcal{A}\big(\Phi (s;\bu_l,\bu_r)\big)\frac{\partial
\Phi }{\partial s} (s;\bu_l,\bu_r)\,ds,
\end{equation}
for every  set $\bu_{l}, \bu_r \in \Omega$. 

\end{itemize}

The definition of path-conservative methods is a \textit{formal concept of consistency} for weak solutions defined on the basis of the
family of paths $\Phi$. In fact, this is a natural extension of the definition of conservative methods for systems of conservation
laws: it can be  easily shown that, if \eqref{sys:nonconservative} is a system of conservation laws, i.e. if $\mathcal{A}(\bu)$ is the Jacobian
of a flux function $F(\bu)$, then every method that is path-conservative for any family of paths can be rewritten as a conservative method (see \cite{castro2017well} for a recent review). 

This framework makes it easy to extend many well-known conservative schemes to nonconservative systems . Let us show two examples:

\begin{itemize}


\item  Godunov method: 
\begin{eqnarray} \label{NCFG1}
\mathcal{D}^-_{G} (\bu_l, \bu_r) =  \int_0^1\mathcal{A}(\Phi(s;\bu_l,\bu_0))\frac{\partial\Phi}{\partial
s}(s;\bu_l,\bu_0)\,ds,\\
\label{NCFG2}
\mathcal{D}^+_{G} (\bu_l, \bu_r) =  \int_0^1\mathcal{A}(\Phi(s;\bu_0,\bu_r))\frac{\partial\Phi}{\partial
s}(s;\bu_0,\bu_r)\,ds,
\end{eqnarray}
where $\bu_0$ is the value at $x=0$ of the
self-similar solution of the Riemann problem 
\begin{equation}\label{RiemannPb}
\left\{
\begin{array}{l}
\displaystyle \partial_t {\bf u} + {\mathcal{A}}({\bf u}) \partial_x {\bf u} = 0, \\
\bu(x, 0) = \begin{cases}
\bu_l &\text{if $x < 0$,}\\
\bu_r & \text{otherwise.}
\end{cases}
\end{array}
\right.
\end{equation}
If the family of paths satisfies some conditions of
compatibility with the solutions of the Riemann problems, the method can be interpreted in terms of the averages of the exact solutions of local Riemann problems in the cells,
as it happens for system of conservation laws:  see
\cite{munoz2007godunov}. 

\item Roe methods:
\begin{equation}\label{NCFR} \mathcal{D}_R^\pm (\bu_l, \bu_r) =
\mathcal{A}^\pm_\Phi(\bu_l, \bu_r) \, (\bu_r - \bu_l),
\end{equation}
where  $\mathcal{A}_\Phi(\bu_l, \bu_r)$ is a  Roe linearization of $\mathcal{A}(\bu)$  in
the sense defined by Toumi in \cite{toumi1992weak}, i.e. a function
$\mathcal{A}_\Phi\colon\Omega \times\Omega \mapsto\mathcal{M}_{N\times
N}(\mathbb{R})$ satisfying the following properties:
\begin{itemize}
\item  for each $\bu_l,\bu_r\in\Omega$,
$\mathcal{A}_\Phi(\bu_l,\bu_r)$ has $N$ distinct real eigenvalues
$\lambda_1(\bu_l, \bu_r)$, \dots, $\lambda_N(\bu_l, \bu_r)$;
\item $\mathcal{A}_\Phi(\bu,\bu)=\mathcal{A}(\bu)$, for every
$\bu\in\Omega$;
\item  for any $\bu_l,\bu_r\in\Omega$,
\begin{equation}\label{Roe-gen}
\mathcal{A}_\Phi(\bu_l,\bu_r)\,
(\bu_r-\bu_l)=\int_0^1\mathcal{A}(\Phi(s;\bu_l,\bu_r))\frac{\partial\Phi}{\partial
s}(s;\bu_l,\bu_r)\,ds.
\end{equation}
\end{itemize}
As usual $\mathcal{A}^\pm_\Phi(\bu_l, \bu_r)$ represent  the matrices whose
eigenvalues are the positive/negative parts of $\lambda_1(\bu_l, \bu_r)$, \dots, $\lambda_N(\bu_l, \bu_r)$
with same eigenvectors. 

\end{itemize}

First order path-conservative numerical schemes can be extended to high-order by using reconstruction operators:
\begin{equation}\label{eq:hopc}
    {\bf u}_{j}'(t) = -\frac{1}{\Delta x}\left(\mathcal{D}_{j+\frac{1}{2}}^{-}(t) + \mathcal{D}_{j-\frac{1}{2}}^{+}(t) + \int_{x_{j-\frac{1}{2}}}^{x_{j+\frac{1}{2}}}\mathcal{A}(P^t_{j}(x))\frac{\partial}{\partial x}P^t_{j}(x)\, dx\right),
\end{equation}
where $P^t_j(x)$ is the smooth approximation of the solution at the $j$th-cell provided by a high-order reconstruction operator from the sequence
of cell values $\{\bu_j(t) \}$ and  
$$\mathcal{D}^\pm_{j+1/2}(t) =\mathcal{D}^\pm_{j+1/2}(\bu^-_{j+1/2}(t),\bu^+_{j+1/2}(t)),$$
where $\bu^-_{j + 1/2}(t) = P^t_{j}(x_{j+\frac{1}{2}})$ and $\bu^-_{j+1/2}(t) = P_{j+1}^{t}(x_{j+\frac{1}{2}})$ 
(see \cite{castro2017well} for details).

In \cite{Castro2008} it was shown that, if the numerical solutions provided by a path-conservative method converge uniformly in the sense of graphs as $\Delta x \to 0$, the limit is a weak solution according to the chosen family of  paths. Nevertheless, this notion of  convergence is too strong and the  numerical solutions provided by finite-difference or finite-volume methods do not converge usually in this sense. This is not to say that path-conservative methods do not converge: in practice, it can be observed that numerical methods like the extensions of Godunov or Roe schemes described in the previous section converge in $L^1$-norm under the usual CFL condition. What happens is that the limit may be a weak solution according  to a different family of paths, i.e. it is a classical solution in the smoothness regions but its discontinuities satisfy a jump condition \eqref{gen_R-H} different of the expected one: see \cite{Castro2008}, \cite{Abgrall2010}.
In fact, the family of paths that controls the jump conditions satisfied by the limits of the numerical solutions is related to the viscous profiles of the equivalent equation of the method: see \cite{Castro2008}. If, for instance, the family of paths is based on the viscous profiles related to a regularization \eqref{NC-p}, the leading terms in the equivalent equation that represent the numerical viscosity of the scheme may not match the viscous term in \eqref{NC-p}.

Observe that, as  it has been mentioned before, the definition of path-conservative method is a formal notion of consistency. Nevertheless, as pointed out before, this consistency, together with stability and control of the entropy, is not enough to ensure the convergence towards the correct weak solution:  the numerical viscosity and, in general, the numerical dissipation effects, have to be well-controlled. Let us stress, before finishing this section, that:

\begin{itemize}
    \item The convergence to wrong weak solutions is not due to the consistency property,  but to the lack of control of the small-scale effects in the numerical solutions.
    \item This convergence issue affects to every methods in which the small-scale effects are not controlled, whether its consistency is based on the notion of path-conservative method or not. 
    \item It is possible to design path-conservative methods that overcome, at least partially, this difficulty, as shown in \cite{chalons2019path} or \cite{castro2013entropy}.
\end{itemize}

\section{Second-order in-cell discontinuous reconstruction path-conservative methods}\label{sec:methods}
In this section, a new numerical method of the form \eqref{eq:hopc} is described.  The scheme is based on a first-order path-conservative numerical method with fluctuation functions $\mathcal{D}^\pm$, which is combined with a particular novel reconstruction operator.  A standard second-order reconstruction operator in smoothness regions is used, while a discontinuous reconstruction operator close to discontinuities is performed, so that numerical viscosity is removed in the non-smooth regions. 
\subsection{Semi-discrete method}
Once the numerical approximations $\bu_j^n$
of the averages of the solutions have been computed at time $t_n = n \Delta t$, the first step is to mark the cells $I_j$ where a discontinuity is present. More explicitly, the cells such that the solution of the Riemann problem consisting of
\eqref{sys:nonconservative} with initial conditions
\begin{equation} \label{RPj}
\bu(x,0) = \begin{cases} 
\bu^n_{j-1} & \text{if $x<0$,}\\
\bu^n_{j+1} & \text{if $x >0$,}
\end{cases}
\end{equation}
involves a shock wave.  Let us denote by $\mathcal{M}_n$ the set of indices of the marked cells, i.e.
\begin{equation}\label{markedcells}
\mathcal{M}_n = \{ j \text{ s.t. the solution of the Riemann problem \eqref{sys:nonconservative}, \eqref{RPj} involves a shock wave} \}.
\end{equation}

To advance in time the following semi-discrete numerical method is considered:
\begin{equation}\label{eq:semi-discrete}
    {\bf u}_{j}'(t) = -\frac{1}{\Delta x}\left(\mathcal{D}_{j+\frac{1}{2}}^{-}(t) + \mathcal{D}_{j-\frac{1}{2}}^{+}(t) + \int_{x_{j-\frac{1}{2}}}^{x_{j+\frac{1}{2}}}\mathcal{A}(P_{j}^{n}(x,t))\frac{\partial}{\partial x}P_{j}^{n}(x,t)dx\right),
    \quad t\geq t_n,
\end{equation}
where
$${\bf u}_{j}(t) \approx  \frac{1}{\Delta x}\int_{x_{j-\frac{1}{2}}}^{x_{j+\frac{1}{2}}} {\bf u}(x,t)dx, $$
$$\mathcal{D}^\pm_{j+1/2}(t) =\mathcal{D}^\pm_{j+1/2}(\bu^-_{j+ 1/2}(t),\bu^+_{j+1/2}(t)), $$
with
$$\bu^-_{j+1/2}(t) = P_{j}^{n}(x_{j+\frac{1}{2}},t), \quad \bu^+_{j+1/2}(t) = P_{j+1}^{n}(x_{j+\frac{1}{2}},t),$$
and $P_{j}^{n}(x,t)$ is defined as follows:
    \begin{itemize}
        \item If $j-1,j,j+1 \notin \mathcal{M}_n$ then $P_j^n$ is the approximation of the first degree Taylor polynomial of the solution  given by: 
        $$P_{j}^{n}(x,t)= \bu_{j}^{n} + \widetilde{\partial_x\bu}_{j}^{n}(x-x_{j})- \mathcal{A}(\bu_{j}^{n})\widetilde{\partial_x\bu}_{j}^{n}(t-t^{n}).$$
        Here, $\widetilde{\partial_x\bu}_{j}^{n}$ is the \textit{minmod} approximation of the first order spacial derivative of 
        $\bu$ at $x_j$ at time $t_n$, whose $k$th component is given by
        $$
        \left(\widetilde{\partial_x\bu}_{j}^{n}\right)_k = \rm{minmod}\left(\alpha \frac{u_{j+1,k}^{n}-u_{j,k}^{n}}{\Delta x}, \frac{u_{j+1,k}^{n}- u_{j-1,k}^{n}}{2\Delta x}, \alpha\frac{u_{j,k}^{n}-u_{j-1,k}^{n}}{\Delta x}\right),$$
        where $u_{j,k}^n$ represents the $k$th component of $\bu_j^n$, $\alpha$ is a parameter with $1\leq \alpha<2$ and 
$$\rm{minmod}(a,b,c) = \begin{cases}
     \min\{a,b,c\} & \text{if $a,b,c>0$,}  \\
     \max\{a,b,c\} & \text{if $a,b,c<0$,}  \\
     0 & \text{otherwise.}
     \end{cases}$$
     
        Observe that for the Taylor polynomial we have used the approximation:
        $$
        \partial_t \bu(x_i, t_n) = - \mathcal{A}(\bu(x_i, t_n))\partial_x \bu(x_i, t_n) \approx - \mathcal{A}(\bu_{j}^{n})\widetilde{\partial_x\bu}_{j}^{n}.
        $$

\item If $j \in \mathcal{M}_n$ then
        $$P_{j}^{n}(x,t) = 
\begin{cases}
\bu^n_{j,l} & \text{ if $x \leq x_{j - 1/2} + d_{j}^{n} \Delta x + \sigma_j^n (t -t_n) $,}\\
\bu^n_{j,r} & \text{otherwise}.
\end{cases}.$$
where
 $d_j^n$ is chosen so that
\begin{equation}\label{conservation}
d_j^n u_{j,l, k}^n + (1 -  d_j^n) u_{j,r, k}^n = u_{j, k}^n, 
\end{equation}
for some index $k \in \{ 1, \dots, N \}$; and $\sigma_j^n$, $\bu^n_{j,l}$, and $\bu^n_{j,r}$ are chosen so that if $\bu^n_{j-1}$ and
$\bu^n_{j+1}$ may be  linked by an admissible discontinuity with speed $\sigma$, then
\begin{equation}\label{speedstates}
\bu^n_{j,l} = \bu^n_{j-1}, \quad \bu^n_{j,r} = \bu^n_{j+1}, \quad \sigma_j^n = \sigma.
\end{equation}
 Observe that this in-cell discontinuous reconstruction can only be done
if $0 \leq d_j^n \leq 1$, i.e. if
$$
0 \leq \frac{u_{j,r,k}^n - u_{j,k}^n}{u_{j,r,k}^n - u_{j,l,k}^n} \leq 1,
$$
otherwise the index $j$ is removed from the set $\mathcal{M}_n$ and the MUSCL-Hancock reconstruction is applied in the cell.
Moreover, if $d_j^n = 1$ and $\sigma_j^n > 0$ (resp. $d_j^n = 0$ and $\sigma_j^n < 0$) the  cell is unmarked and the cell $I_{j+1}$ (resp. $I_{j-1}$) is marked if necessary: note that in these cases, the discontinuity leaves the cell $I_j$ for any $t > t_n$.
       
        \item Otherwise (i.e. if $j \notin \mathcal{M}_n$ but $j-1 \in \mathcal{M}_n$ or $j+1 \in \mathcal{M}_n$) then
        $$P_{j}^{n}(x,t)= \bu_{j}^{n}.$$
\end{itemize}

\begin{rem} In the case $j \in \mathcal{M}_n$, if one of the equations of system \eqref{sys:nonconservative}, say the $k$th one,  is a conservation law,   the index $k$ is selected in  \eqref{conservation},  so that the corresponding variable is conserved. Moreover, if there is a linear combination of the unknowns $\sum_{k=1}^N \alpha_k u_k$ that is conserved, \eqref{conservation} may be replaced by:
\begin{equation}\label{conservation2}
d_j^n \sum_{k = 1}^N \alpha_k u_{j,l, k}^n + (1 -  d_j^n) \sum_{k = 1}^N \alpha_k u_{j,r, k}^n = \sum_{k = 1}^N \alpha_k u_{j, k}^n. 
\end{equation}
If there are more than one conservation laws, the index $k$ corresponding to one of them is selected in \eqref{conservation}.

\end{rem}

\subsection{Choice of $\sigma_j^n$, $\bu^n_{j,l}$, $\bu^n_{j,r}$}\label{ss:strategy}
Two different strategies are considered here: the first one is based on the exact solutions of the Riemann problems and the second one on a Roe linearization:

\begin{itemize}

\item \textbf{First strategy:}
 Assume that the solutions for Riemann problems are explicitly known. Then, for any given marked cell $j$, we may choose  $\sigma_j^n$, $\bu^n_{j,l}$, $\bu^n_{j,r}$  as the speed,  the left, and the right states of (one of the) discontinuous waves appearing in the solution of the Riemann problem with initial data $\bu^n_{j-1}$, $\bu^n_{j+1}$.

\item \textbf{Second strategy:} If a Roe matrix is available, for any given  marked cell $j$, we may choose $\sigma_j^n$, $\bu^n_{j,l}$, $\bu^n_{j,r}$ as the speed,  the left, and the right states of one of the discontinuities appearing in the solution of the linearized Riemann problem with initial data $\bu^n_{j-1}$, $\bu^n_{j+1}$. More explicitly, an index $k^*$ is selected and then
$$
\sigma_j^n = \lambda_{k^*}(\bu^n_{j-1}, \bu^n_{j+1}), \quad \bu^n_{j,l} = \bu^n_{j-1} +  \sum_{k=1}^{k^*-1} \alpha_k R_k(\bu^n_{j-1}, \bu^n_{j+1}),
\quad \bu^n_{j,r} = \bu^n_{j,l} + \alpha_{k^*} R_{k^*}(\bu^n_{j-1}, \bu^n_{j+1}),
$$
where $\alpha_k$, $k = 1, \dots,N$ represent the coordinates of $\bu^n_{j+1} - \bu^n_{j-1}$ on the basis of eigenvectors of $\mathcal{A}_\Phi(\bu^n_{j-1}, \bu^n_{j+1})$, i.e.
$$
\bu^n_{j+1} - \bu^n_{j-1} = \sum_{k=1}^N \alpha_k R_k(\bu^n_{j-1}, \bu^n_{j+1}).
$$

\end{itemize}

It can be easily checked  that both strategies satisfy \eqref{speedstates} if the solution of the Riemann problem with initial data
$\bu^n_{j-1}$, $\bu^n_{j+1}$ consist of only one discontinuous wave. These two strategies can be easily extended  to any approximate Riemann solver.

\subsection{Time step}

The time step $\Delta t_n$ is chosen as follows:
\begin{equation}
    \Delta t_n = \min(\Delta t^c_n, \Delta t^r_n).
\end{equation}
Here
\begin{equation}
    \Delta t_n^c = CFL \min\left( \frac{ \Delta x}{ \max_{j,l}|\lambda_{j,l}|} \right)
\end{equation}
where $CFL \in (0,1)$ is the stability parameter and $\lambda_{j,l}, \dots, \lambda_{j,N}$ represent  the eigenvalues of
$\mathcal{A}(\bu_j^n)$; and
\begin{equation}
    \Delta t_n^r = \min_{j \in \mathcal{M}_n} \begin{cases}
    \displaystyle \frac{1 - d_j^n}{|\sigma_j^n|}\Delta x, & \quad if \quad \sigma_j^n>0, \\ \\
    \displaystyle \frac{d_j^n}{|\sigma_j^n|}\Delta x, & \quad if \quad \sigma_j^n<0.
    \end{cases} 
\end{equation}
Observe that, besides the stability requirement, this choice of time step ensures that no discontinuous reconstruction leaves a marked cell. 

\subsection{Fully discrete method}
Once the time step is chosen, (\ref{eq:semi-discrete}) is integrated in the interval $[t^{n}, t^{n+1}]$, with $t^{n+1}= t^n + \Delta t_n$, to
obtain: 
$$ \bu_{j}^{n+1} = \bu_j^n -\frac{1}{\Delta x}\int_{t^{n}}^{t^{n+1}}\left(\mathcal{D}_{j+\frac{1}{2}}^{-}(t) + \mathcal{D}_{j-\frac{1}{2}}^{+}(t) + \dashint_{x_{j-\frac{1}{2}}}^{x_{j+\frac{1}{2}}}\mathcal{A}(P_{j}^{n}(x,t)) \partial_x P_{j}^{n}(x,t)dx\right)dt,$$
and the mid-point rule is used to approximate the integrals in time: 
\begin{equation}
    \bu_{j}^{n+1} = \bu_{j}^{n} - \frac{\Delta t_n}{\Delta x}\left(\mathcal{D}_{j+\frac{1}{2}}^{-}(t^{n+\frac{1}{2}}) + \mathcal{D}_{j-\frac{1}{2}}^{+}(t^{n+\frac{1}{2}}) +  \dashint_{x_{j-\frac{1}{2}}}^{x_{j+\frac{1}{2}}}\mathcal{A}(P_{j}^{n}(x,t^{n+1/2}))
    \partial_x P_{j}^{n}(x,t^{n+1/2})\, dx\right).
\end{equation}

The computation of the dashed integral in this expression depends  on the cell:
\begin{enumerate}
    \item If $j-1, j, j+1 \notin \mathcal{M}_n$ the mid-point rule is  used again to approximate the integral:
 \begin{equation}   
  \int_{x_{j-\frac{1}{2}}}^{x_{j+\frac{1}{2}}}\mathcal{A}(P_{j}^{n}(x,t^{n+1/2}))  \partial_x P_{j}^{n}(x,t^{n+1/2}) \,dx \approx \Delta x \mathcal{A}(\bu_{j}^{n+\frac{1}{2}})\widetilde{\partial_x\bu}_{j}^{n},
    \end{equation} 
    where 
    $$\bu_{j}^{n+\frac{1}{2}} = P_{j}^{n}(x_{j}, t^{n+\frac{1}{2}}) = \bu_{j}^{n} - \frac{\Delta t}{2}\mathcal{A}(\bu_{j}^{n})\widetilde{\partial_x\bu}_{j}^{n}.$$
    
    \item If $j \in \mathcal{M}_n$, taking into account the definition of the  dashed integrals \eqref{wint}, one has:
     \begin{equation}   
  \dashint_{x_{j-\frac{1}{2}}}^{x_{j+\frac{1}{2}}}\mathcal{A}(P_{j}^{n}(x,t^{n+1/2}))  \partial_x P_{j}^{n}(x,t^{n+1/2}) \, dx=
  \int_0^1 \mathcal{A}(\Phi(s; \bu^n_{j,l}, \bu^n_{j,r}))\partial_s \Phi(s; \bu^n_{j,l}, \bu^n_{j,r})\, ds. 
    \end{equation}    
    Observe that, if $\bu^n_{j,l}$ and $\bu^n_{j,r}$ can be linked by a shock whose speed is $\sigma_j^n$, then  
    the generalized Rankine-Hugoniot condition \eqref{gen_R-H} leads to
     \begin{equation}   
  \dashint_{x_{j-\frac{1}{2}}}^{x_{j+\frac{1}{2}}}\mathcal{A}(P_{j}^{n}(x,t^{n+1/2}))  \partial_x P_{j}^{n}(x,t^{n+1/2}) \, dx= \sigma_j^n \left(\bu^n_{j,r}- \bu^n_{j,l}\right). 
    \end{equation}

    \item If $j\notin \mathcal{M}_n$ but $j-1 \in \mathcal{M}_n$ or $j + 1 \in \mathcal{M}_n$ then
    \begin{equation}
          \int_{x_{j-\frac{1}{2}}}^{x_{j+\frac{1}{2}}}\mathcal{A}(P_{j}^{n}(x,t^{n+1/2}))  \partial_x P_{j}^{n}(x,t^{n+1/2}) \,dx  = 0.
    \end{equation}
\end{enumerate}

The final expression of the fully discrete numerical method is then as follows:
\begin{equation}\label{eq:discrete1}
    \bu_{j}^{n+1} = \bu_{j}^{n} - \frac{\Delta t_n}{\Delta x}\left(\mathcal{D}_{j+\frac{1}{2}}^{-}(t^{n+\frac{1}{2}}) + \mathcal{D}_{j-\frac{1}{2}}^{+}(t^{n+\frac{1}{2}}) + \mathcal{D}_j\right),
\end{equation}
where
\begin{equation}\label{eq:discrete2}
 \mathcal{D}_j = 
 \begin{cases}
 \Delta x \mathcal{A}(\bu_{j}^{n+\frac{1}{2}})\widetilde{\partial_x\bu}_{j}^{n} & \text{if $j-1, j, j+1 \notin \mathcal{M}_n$;}\\
\displaystyle  \int_0^1 \mathcal{A}(\Phi(s; \bu^n_{j,l}, \bu^n_{j,r}))\partial_s \Phi(s; \bu^n_{j,l}, \bu^n_{j,r})\, ds & \text{if $j \in \mathcal{M}_n$;}\\
 0 &\text{otherwise.}
 \end{cases}
\end{equation}

Observe that the numerical method satisfies the following properties: 
\begin{itemize}
\item Far from discontinuities it coincides with the standard MUSCL-Hancock
\item Close to discontinuities it corresponds to the numerical method introduced in \cite{chalons2019path}. Nevertheless, there is a slight variation when compared with \cite{chalons2019path}: in that reference, the discontinuities are allowed to leave the marked cells and the contribution to the neighbor cells are then taken into account. While this technique allows one to avoid additional restrictions to the time step, it makes more difficult the implementation of the numerical method. Nevertheless, the technique proposed here may as well be implemented as in \cite{chalons2019path}. 
\end{itemize}

\subsection{Shock-capturing property}

Let us prove that isolated shock waves are
exactly captured by the scheme and contain no spurious numerical diffusion. Although the proof is essentially the same as in \cite{chalons2019path}, it is included for the sake of completeness. 

\begin{thm}\label{th}
Assume that $\bu_l$ and $\bu_r$ can be joined by an entropy shock of speed $\sigma$. Then, the numerical method  provides an exact numerical solution
of the Riemann problem with initial conditions
$$
\bu(x,0) = \begin{cases}
\bu_l & \text{if $x < 0$,} \\
\bu_r & \text{otherwise,}
\end{cases} 
$$
in the sense that
\begin{equation}\label{presave}
\bu_j^n = \frac{1}{\Delta x}\int_{x_{j-1/2}}^{x_{j+1/2}} \bu(x,t^n) \, dx, \quad \forall j, n
\end{equation}
where $\bu(x,t)$ is the exact solution.

\end{thm}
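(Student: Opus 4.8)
The plan is to argue by induction on $n$, with the identity \eqref{presave} itself as the induction hypothesis at time $t^n$. For $n=0$ it holds by construction, since the scheme is initialised with the exact cell averages of the piecewise‑constant Riemann data. So assume $\bu_j^n=\frac{1}{\Delta x}\int_{x_{j-1/2}}^{x_{j+1/2}}\bu(x,t^n)\,dx$ for all $j$, where $\bu(x,t)$ is the exact self‑similar solution, equal to $\bu_l$ for $x<\sigma t$ and to $\bu_r$ for $x>\sigma t$. Since $\bu_l\neq\bu_r$, there is a unique cell $I_{j_0}$ containing the shock position $x_s^n=\sigma t^n$, at fractional abscissa $d=(x_s^n-x_{j_0-1/2})/\Delta x\in[0,1)$ (the degenerate values $d=0,1$ being absorbed by the unmarking/relabelling rules stated after \eqref{conservation}). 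Thus $\bu_j^n=\bu_l$ for $j<j_0$, $\bu_j^n=\bu_r$ for $j>j_0$, and $\bu_{j_0}^n=d\,\bu_l+(1-d)\,\bu_r$.

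First I would pin down the reconstruction in $I_{j_0}$. Because $\bu_{j_0-1}^n=\bu_l$ and $\bu_{j_0+1}^n=\bu_r$, the Riemann problem \eqref{RPj} attached to cell $j_0$ has data $(\bu_l,\bu_r)$, whose solution is by hypothesis a single entropy shock of speed $\sigma$; hence $j_0\in\mathcal{M}_n$ and, by \eqref{speedstates}, $\bu^n_{j_0,l}=\bu_l$, $\bu^n_{j_0,r}=\bu_r$, $\sigma^n_{j_0}=\sigma$. The constraint \eqref{conservation} then reads $d^n_{j_0}u_{l,k}+(1-d^n_{j_0})u_{r,k}=u^n_{j_0,k}=d\,u_{l,k}+(1-d)\,u_{r,k}$, and picking an index $k$ with $u_{l,k}\neq u_{r,k}$ (one exists since $\bu_l\neq\bu_r$) gives $d^n_{j_0}=d$. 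Consequently $P^n_{j_0}(\cdot,t)$ is $\bu_l$ to the left of $x_{j_0-1/2}+d\,\Delta x+\sigma(t-t^n)$ and $\bu_r$ to its right, i.e. it coincides with the exact solution on $\overline{I_{j_0}}$ for every $t\in[t^n,t^{n+1}]$; moreover the time‑step bound $\Delta t^r_n$ guarantees that this internal jump never leaves $I_{j_0}$, so $x_s^{n+1}\in\overline{I_{j_0}}$ as well.

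Next I would assemble the fully discrete update \eqref{eq:discrete1}–\eqref{eq:discrete2}. For the marked cell, the generalised Rankine--Hugoniot relation \eqref{gen_R-H} gives $\mathcal{D}_{j_0}=\int_0^1\mathcal{A}(\Phi(s;\bu_l,\bu_r))\partial_s\Phi\,ds=\sigma(\bu_r-\bu_l)$. At any interface lying at least two cells away from $I_{j_0}$ the two one‑sided traces coincide (both $\bu_l$, or both $\bu_r$), so $\mathcal{D}^\pm$ vanish there by \eqref{pc1}, and $\mathcal{D}_j$ vanishes too (a minmod slope of constant data, or the ``otherwise'' case). The trace of $P^n_{j_0}$ at $x_{j_0-1/2}$ (resp. $x_{j_0+1/2}$) equals $\bu^n_{j_0,l}=\bu_l$ (resp. $\bu^n_{j_0,r}=\bu_r$) because at $t^{n+1/2}$ the internal jump is strictly interior to $I_{j_0}$; the same fact makes the relevant integrands constant in $t$ on $[t^n,t^{n+1}]$, so the midpoint quadrature in time is exact. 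Granting that the fluctuations $\mathcal{D}^+_{j_0-1/2}$ and $\mathcal{D}^-_{j_0+1/2}$ vanish and that no other cell average is altered, \eqref{eq:discrete1} yields $\bu_j^{n+1}=\bu_j^n$ for $j\neq j_0$ and $\bu_{j_0}^{n+1}=\bu_{j_0}^n-\frac{\Delta t_n}{\Delta x}\,\sigma(\bu_r-\bu_l)$. On the other hand, between $t^n$ and $t^{n+1}$ the exact average over $I_{j_0}$ changes only across the strip swept by the shock, of signed width $\sigma\Delta t_n$, where $\bu$ switches between $\bu_r$ and $\bu_l$; evaluating this change gives exactly $-\frac{\sigma\Delta t_n}{\Delta x}(\bu_r-\bu_l)$, so the new averages match and the induction closes.

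The main obstacle is precisely the bracketed claim just used: that nothing around $I_{j_0}$ contributes except $\mathcal{D}_{j_0}$. The only other cells that can be flagged are $I_{j_0\pm1}$, since the Riemann problems \eqref{RPj} of all cells at distance $\geq2$ from $j_0$ have constant data; and these carry data $(\bu_l,\bu_{j_0}^n)$ and $(\bu_{j_0}^n,\bu_r)$, which may themselves contain a shock. One has to check that, once the appropriate shock is selected there, the reconstruction parameter produced by \eqref{conservation} together with the relabelling rules following it either makes such a cell revert to the constant/MUSCL reconstruction (the reconstructed discontinuity leaving the cell for the admissible time steps), or else leaves at $x_{j_0\mp1/2}$ a one‑sided fluctuation that is exactly cancelled by the corresponding in‑cell term by virtue of the consistency identity \eqref{pc2}. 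This case analysis, carried out in \cite{chalons2019path}, is the only genuinely delicate point; everything else is bookkeeping with \eqref{gen_R-H}, \eqref{pc1} and \eqref{pc2}.
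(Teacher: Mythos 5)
Your argument is essentially the paper's own: the same induction in $n$, the same identification of the shock cell and of $d^n_{j_0}=d$ via \eqref{conservation}, the same use of the generalized Rankine--Hugoniot condition \eqref{gen_R-H} to evaluate the in-cell term as $\sigma(\bu_r-\bu_l)$, the same vanishing of the interface fluctuations through \eqref{pc1}, and the same use of the time-step restriction to keep the reconstructed jump inside the cell so that the updated average matches the exact one. The one place where you depart from the paper is the step you single out at the end and leave open, namely that the neighbouring cells $I_{j_0\pm1}$ contribute nothing. In the paper this is not treated as a delicate case analysis: it is simply stated that, for $0<d<1$, the only marked cell is the one containing the shock (there denoted $I_{j^*}$), so the neighbouring reconstructions are the constants $\bu_l$ and $\bu_r$, both one-sided traces at $x_{j^*\pm 1/2}$ coincide, every fluctuation other than the in-cell term of the marked cell vanishes by \eqref{pc1}, and the degenerate cases $d=0,1$ are handled by shifting the marked cell. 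What you defer to \cite{chalons2019path} is therefore exactly the content the paper supplies (by assertion) to close the proof.

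Because you introduce that claim with ``granting that'' and never discharge it, your write-up has a gap at precisely that point. Your caution is not unreasonable: under the literal criterion \eqref{markedcells} the Riemann problems attached to $I_{j_0\pm1}$, with data $(\bu_l,\bu^n_{j_0})$ and $(\bu^n_{j_0},\bu_r)$, can indeed involve shocks, and one must then invoke \eqref{conservation} (which, when the selected shock shares a state --- or at least the conserved component of a state --- with the constant neighbouring average, pushes $d^n_{j_0\pm1}$ to $0$ or $1$) together with the unmarking rules to conclude that these cells end up with constant reconstructions and zero in-cell terms. The paper does not spell this out either, but it does commit to the statement $\mathcal{M}_0=\{j^*\}$ and deduces the rest from \eqref{pc1} and \eqref{eq:discrete1}--\eqref{eq:discrete2}; your proposal instead leaves it as an unproven lemma borrowed from \cite{chalons2019path}. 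Supplying that localization argument (or at least asserting and verifying it for the marking and selection strategies of Section 3, as the paper does) is what is needed to turn your proposal into a complete proof.
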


\begin{proof} Let us suppose that $ 0 \in I_{j^*}$ and $ 0 = x_{j^*- 1/2} + d \Delta x$, with $0 \leq d \leq 1$.  Then the initial cell averages are:
$$
\bu^0_j = \begin{cases}
\bu_l & \text{if $j< j^*$;}\\
d \bu_l + (1- d) \bu_r  & \text{if $j =  j^*$;}\\
\bu_r & \text{otherwise.}
\end{cases}
$$
If $0 < d < 1$ the only marked cell at time $t^0 = 0$  is $I_{j^*}$, i.e. $\mathcal{M}_0 = \{ j^* \}.$ The only non-constant reconstruction is then $P_0^0$ and the equalities
$$
\bu_j^1 = \bu_j^0 = \frac{1}{\Delta x}\int_{x_{j-1/2}}^{x_{j+1/2}} \bu(x,t^1) \, dx, \quad \forall j \not= j^*
$$
can be easily deduced from the definition of the numerical method.

Let us compute $\bu_{j^*}^1$. Observe that, in order to 
have \eqref{conservation}, necessarily 
$d_0^0 = d$. Therefore, since  $\bu_l$ and $\bu_r$ can be linked by an admissible discontinuity of speed $\sigma$, using 
\eqref{speedstates} one has:
$$
P_0^0(x,t) = 
\begin{cases}
\bu_{l} & \text{ if $x \leq  \sigma t $,}\\
\bu_{r} & \text{otherwise}.
\end{cases}
$$
Observe  that $P_0^0$ coincides with the exact solution. We have now:
\begin{eqnarray*}
\bu^1_{j^*} & = & \bu_{j^*}^{0} - \frac{\Delta t_0}{\Delta x}\left(\mathcal{D}_{\frac{1}{2}}^{-}(t^{\frac{1}{2}}) + \mathcal{D}_{-\frac{1}{2}}^{+}(t^{\frac{1}{2}}) + \mathcal{D}_0\right)\\
& = & \bu_{j^*}^0  - \frac{\Delta t_0}{\Delta x} \mathcal{D}_0\\
& = & \bu_{j^*}^0 - \frac{\Delta t_0}{\Delta x} \sigma (\bu_r - \bu_l)\\
& = & \left(d + \frac{\sigma \Delta t}{\Delta x} \right) \bu_l+  \left( 1 - d - \frac{\sigma \Delta t}{\Delta x} \right)\bu_r .
\end{eqnarray*}
where it has been used that
$$
\bu^-_{j-1+2}(t^{1/2}) = \bu^+_{j-1+2}(t^{1/2}) = \bu_l, 
$$
$$
\bu^-_{j+1+2}(t^{1/2}) = \bu^+_{j+1+2}(t^{1/2}) = \bu_r, 
$$
so that 
$$
\mathcal{D}_{-\frac{1}{2}}^{+}(t^{\frac{1}{2}}) = \mathcal{D}_{\frac{1}{2}}^{-}(t^{\frac{1}{2}}) = 0.
$$
On the other hand,  due to the time step restrictions one has
$$
x_{j^*-1/2} \leq x_{j^*-1/2} + d\Delta x + \sigma \Delta t  = \sigma \Delta t \leq x_{j^* + 1/2}.
$$
Thus, it can be easily checked that:
$$
\frac{1}{\Delta x} \int_{x_{j^*-1/2}}^{x_{j^* + 1/2}}\bu(x,t^1) \, dx= \bu^1_{j^*},
$$
and \eqref{presave} has been proved for $n = 1$.

If $d = 1$ (resp. $d = 0$) the only marked cell is $I_{j+1}$ (resp. $I_{j-1}$) and the proof  is similar. 

The proof of the equality \eqref{presave} for $n \geq 2$ is similar to the case $n = 1$.

\end{proof}

\section{Numerical tests}
The following numerical methods will be applied here to three nonconservative systems:
\begin{itemize}
\item O1\_noDisRec: standard first-order path-conservative Roe or Godunov (it will be indicated between parentheses) methods;
    \item O1\_DisRec: first-order path-conservative method with discontinuous reconstruction;
 \item O2\_noDisRec: second-order extension standard of the first order path-conservative  method based on the MUSCL-Hancock reconstruction;
    \item O2\_DisRec: second-order path-conservative method that combines MUSCL-Hancock and discontinuous reconstruction;
\end{itemize}

\subsection{Coupled Burgers system}
Let us first consider the toy system
\begin{equation} \label{syst_CB}
\left\{
\begin{array}{rcl}
\displaystyle \partial_t u + \partial_x \left( \frac{u^2}{2} \right) + u \partial_x v &=& 0, \\
\displaystyle \partial_t v + \partial_x \left( \frac{v^2}{2} \right) + v \partial_x u &=& 0, \\
\end{array}
\right.
\quad (x,t) \in \mathbb{R} \times \mathbb{R}^+,
\end{equation}
introduced in \cite{CMP}, 
where ${\bf u} = (u,v)^T$ belongs to the state space $\Omega=\{{\bf u} \in \mathbb{R}^2, u+v > 0\}$. This system can be written in 
the form \eqref{sys:nonconservative} with 
$$
\mathcal{A}(\bu) = \left[ \begin{array}{cc} u  & u \\ v & v \end{array} \right].
$$
The system is strictly hyperbolic in $\Omega$ with eigenvalues
$$
\lambda_1 (\bu) = 0, \quad \lambda_2 (\bu) = u + v.
$$
whose characteristic fields, given by the eigenvectors
$$
R_1(\bu) = [1 , -1]^T, \quad R_2(\bu) = [u, v]^T,
$$
are respectively linearly degenerate and genuinely nonlinear. 

The sum $u + v$ satisfies the standard Burgers equation
$$
\partial_t (u+ v) + \partial_x\left( \frac{1}{2}(u+ v)^2 \right) = 0,
$$
and thus the variable $u+v$ is conserved. 

Once the family of paths has been chosen, the simple waves of this system are:
\begin{itemize}
    \item  Stationary contact discontinuities linking states $\bu_l$, $\bu_r$ such that
    $$ u_l + v_l = u_r + v_r.$$
    
    \item Rarefactions waves joining states $\bu_l$, $\bu_r$ such that
    $$ u_l + v_l < u_r + v_r, \quad \frac{u_l}{v_l} = \frac{u_r}{v_r}.$$
    
    \item Shock waves joining states $\bu_l$ and $\bu_r$ such that
    $$ u_l + v_l > u_r + v_r$$
    that satisfy the jump condition:
    \begin{eqnarray*}
    \sigma[u] & = & \left[\frac{u^2}{2} \right] + \int_0^1 \phi_u(s; \bu_l, \bu_r) 
    \partial_s \phi_v(s; \bu_l, \bu_r) \,d s,\\
    \sigma[v] & = & \left[\frac{v^2}{2} \right] +  \int_0^1 \phi_v(s; \bu_l, \bu_r) 
    \partial_s \phi_u(s; \bu_l, \bu_r) \,ds.
    \end{eqnarray*}
    As usual, for any variable $\phi$, $[\phi]$ stands for the jump on the variable $\phi_r -\phi_l$. 
    Remark that this leads, independently of the choice of the family of paths, to    
    $$
    \sigma = \frac{u_l + v_l + u_r + v_r}{2},
    $$    
\end{itemize}

If, for instance, the family of straight segments is chosen
\begin{equation}\label{CB:segments}
\phi_u(s; \bu_l, \bu_r) = u_l + s(u_r - u_l); \quad \phi_v(s; \bu_l, \bu_r) = v_l + s(v_r - v_l),
\end{equation}
the jump conditions reduce to:
    \begin{eqnarray*}
    \sigma[u] & = & \left(\frac{u_l + u_r}{2} \right) (u_r - u_l + v_r - v_l),\\
    \sigma[v] & = & \left(\frac{v_l + v_r}{2} \right) (u_r - u_l + v_r - v_l),
    \end{eqnarray*}
   and two states can be joined by an admissible shock if
$$
 u_l + v_l > u_r + v_r, \quad \frac{u_l}{v_l} = \frac{u_r}{v_r}.
 $$
A Roe matrix is given in this case by:
\begin{equation}\label{CB:Roe}
\mathcal{A}(\bu_l, \bu_r) = \left[ \begin{array}{cc} 0.5(u_l + u_r) & 0.5(u_l + u_r) \\ 0.5 (v_l + v_r) & 0.5 (v_l + v_r)
\end{array} \right].
\end{equation}

As it will be seen in Test \hyperref[CBtest1]{1}, the corresponding Roe method captures correctly the discontinuities of the weak solutions, what puts on evidence that being path-conservative is not in itself a barrier to the convergence to the right solutions. Nevertheless this is not true for other choices of family of paths. Let us consider, for instance, the family of paths given by the viscous profiles of the regularized system:
\begin{equation} \label{syst_CB_visc}
\left\{
\begin{array}{rcl}
\displaystyle \partial_t u_ + \partial_x \left( \frac{u^2}{2} \right)+ u \partial_x v &=& \epsilon u_{xx}, \\
\displaystyle \partial_t v + \partial_x \left( \frac{v^2}{2} \right) + v \partial_x u &=& \epsilon v_{xx}, \\
\end{array}
\right.
\quad (x,t) \in \mathbb{R} \times \mathbb{R}^+,
\end{equation}
introduced in \cite{Berthon}: see this reference for the expression of the corresponding family of paths.

It will be seen in Test \hyperref[CB:test2]{2} that Godunov's method does not converge to the right weak solutions. In
\cite{chalons2019path} the in-cell discontinuous reconstruction technique has been used to correct this issue with good results. To apply this technique, a cell is marked if
$$
u^n_{j-1} + v^n_{j-1} > u^n_{j+1} + v^n_{j+1}.
$$
Strategy 1 (based on the exact solutions of the Riemann problems) is followed here to select the discontinuous reconstruction (see Subsection \ref{ss:strategy}). More precisely, in a marked cell 
the left and right states are chosen as follows:
$$
\sigma_j^n = \frac{1}{2} (u^n_{j-1} + v^n_{j-1} + u^n_{j+1} + v^n_{j+1}), \quad  \bu^n_{j,l} = \bu^*(\bu^n_{j-1}, \bu^n_{j+1} ) , \quad \bu^n_{j,r} = \bu^n_{j+1},
$$
where $ \bu^*(\bu^n_{j-1}, \bu^n_{j+1})$ represents the state at the left of the shock wave appearing in the solution of the Riemann problem. Finally, 
the conserved variable $u + v$ is chosen to determine $d_j^n$, i.e.
$$
d_j^n (u^n_{j,l} + v^n_{j,l}) + (1 -  d_j^n) (u^n_{j,r} + v^n_{j,r})= (u^n_j + v^n_j).
$$
This method is extended here to second order following Section \ref{sec:methods}.

\subsubsection*{Test 1: Coupled Burgers' equations with straight segment paths}  \label{CBtest1}
In this test case we consider the definition of weak solution related to the  family of straight segments \eqref{CB:segments} and the corresponding Roe matrix \eqref{CB:Roe}. Let us consider the following initial condition
$$\bu_{0}(x) = (u,v)_{0}(x) = \begin{cases}
     (2.0,2.0) & \text{if $x<0.5$,}  \\
     (1.0,1.0) & \text{otherwise.}
\end{cases}$$
The solution of the Riemann problem in this case consists of a shock wave joining the left and right states.

Figures \ref{fig:Burgers_Test1_O1_vs_O2_DisRec_vs_noDisRec_Roe_u} and  \ref{fig:Burgers_Test1_O1_vs_O2_DisRec_vs_noDisRec_Roe_v} compare the exact solution with the numerical approximations at time $t = 0.1$ obtained with 
O$p$\_noDisRec and O$p$\_DisRec, $p = 1,2$ using a 1000-cell mesh and CFL=0.5:  as it can be seen, in spite of the numerical diffusion added by the CFL parameter's choice,  the four  methods capture correctly the exact solution. The same comparison has been  done for a number of different Riemann problems and, in all cases, the numerical solutions converge to the weak solution. As we said before this test puts on evidence that being path-conservative is not in itself a barrier to the convergence to the right solutions.

\begin{figure}[h]
		\begin{subfigure}{0.5\textwidth}
			\includegraphics[width=1.1\linewidth]{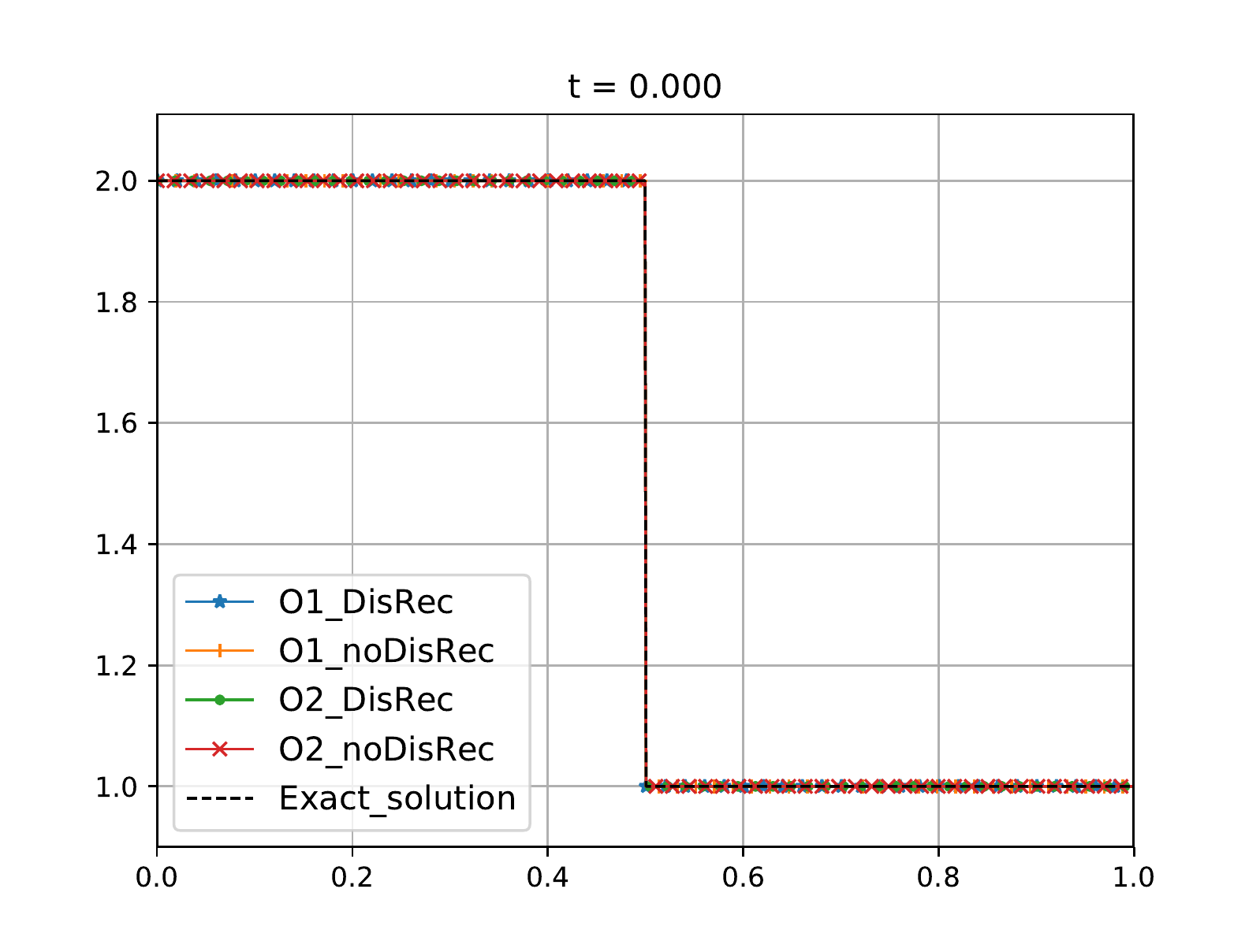}
		\end{subfigure}
		\begin{subfigure}{0.5\textwidth}
				\includegraphics[width=1.1\linewidth]{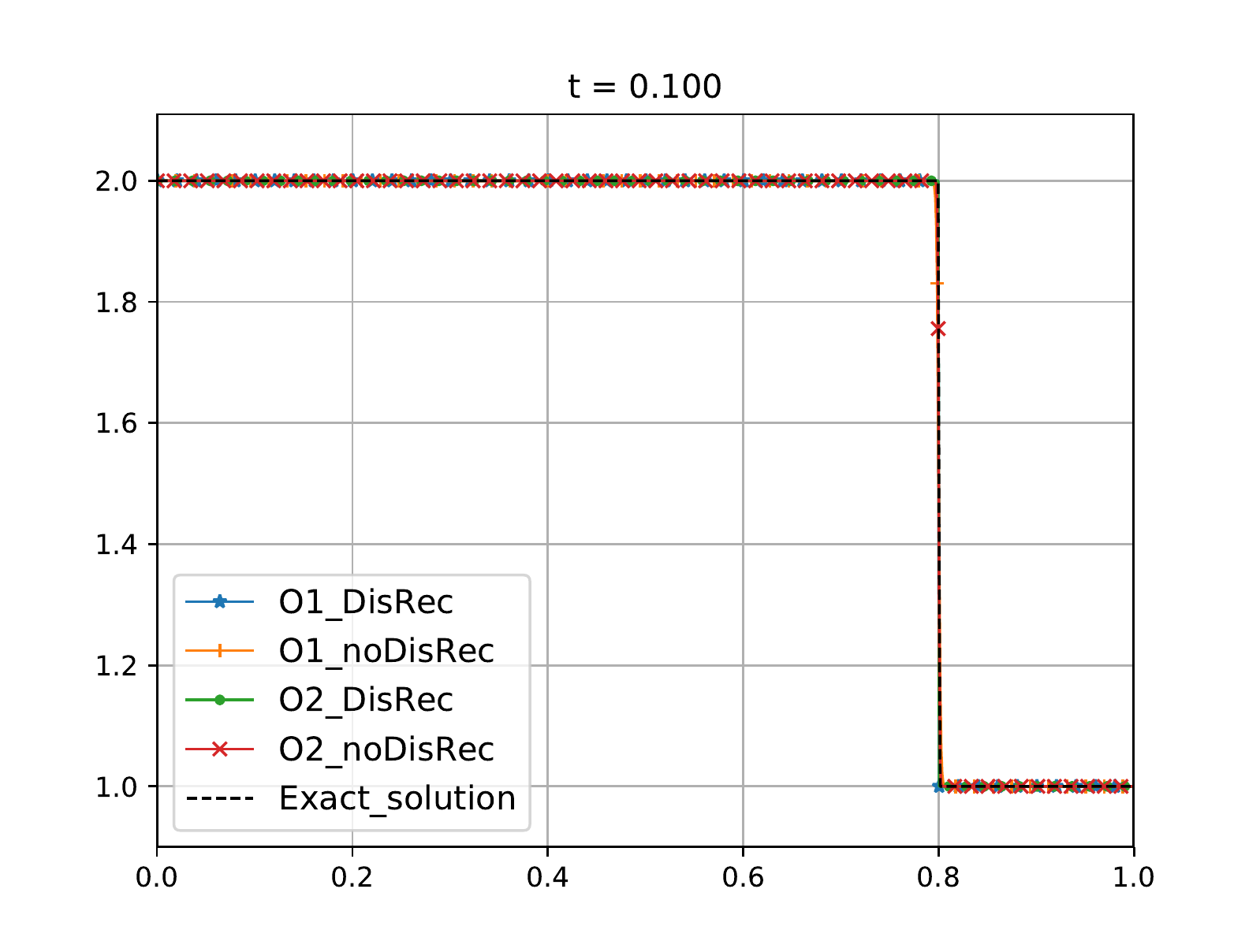}
		\end{subfigure}
		\caption{Coupled Burgers system. Test 1: variable $u$. Left: initial condition. Right: exact solution and numerical solutions obtained at time $t = 0.1$ with 1000 cells.}
		\label{fig:Burgers_Test1_O1_vs_O2_DisRec_vs_noDisRec_Roe_u}
	\end{figure}
	
	\begin{figure}[h]
		\begin{subfigure}{0.5\textwidth}
			\includegraphics[width=1.1\linewidth]{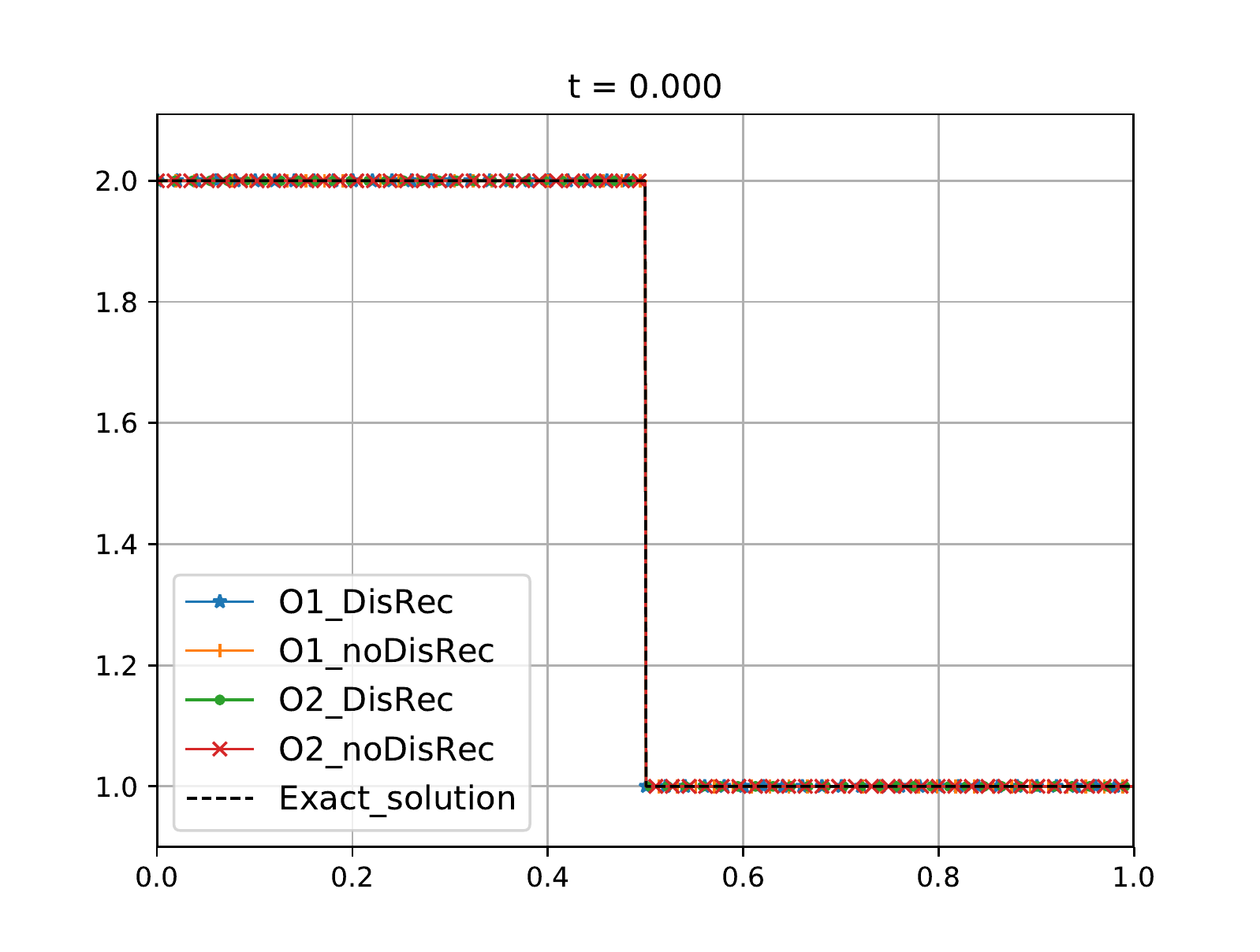}
		\end{subfigure}
		\begin{subfigure}{0.5\textwidth}
				\includegraphics[width=1.1\linewidth]{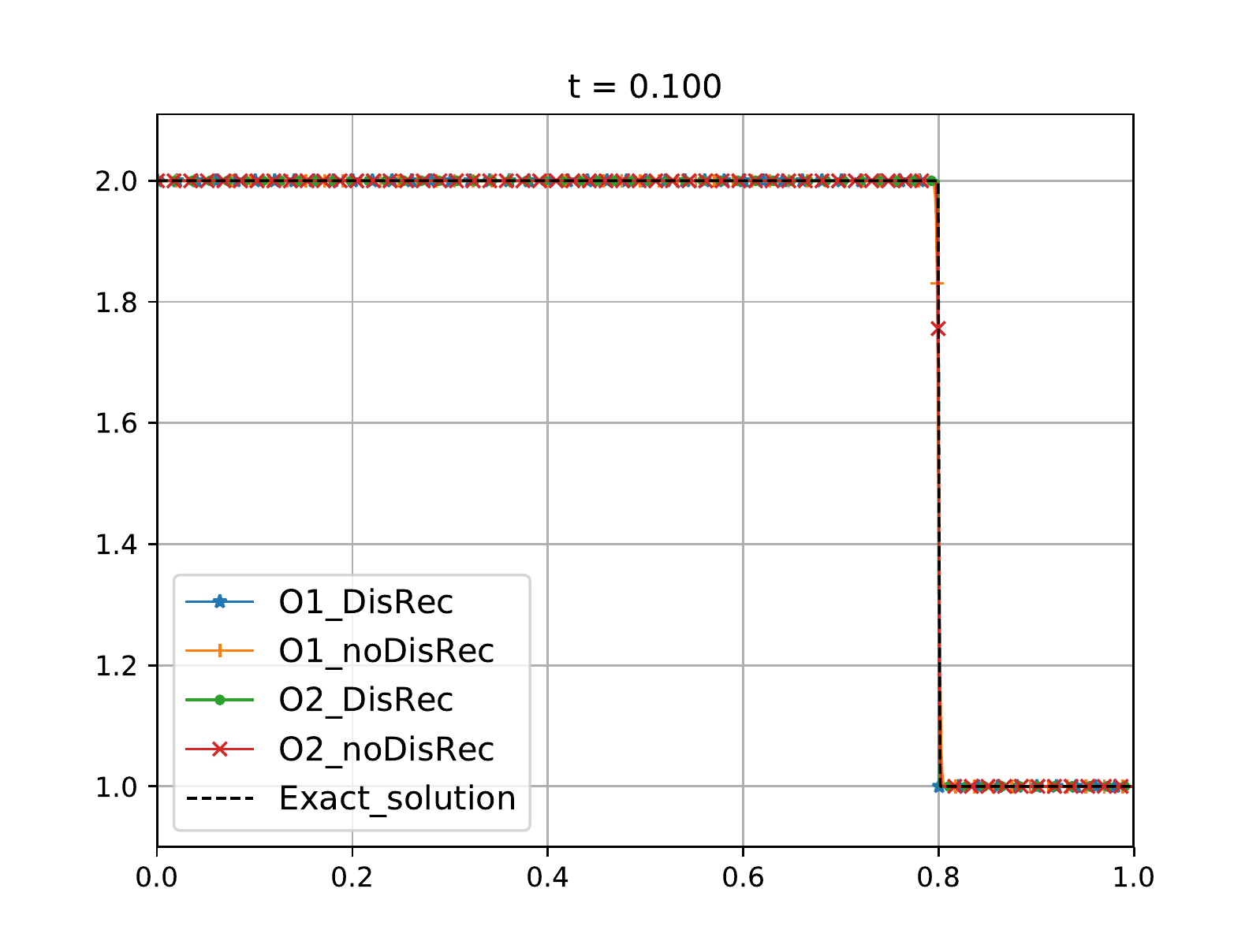}
		\end{subfigure}
		\caption{Coupled Burgers system. Test 1: variable $v$. Left: initial condition. Right: exact solution and numerical solutions obtained at time $t = 0.1$ with 1000 cells.}
		\label{fig:Burgers_Test1_O1_vs_O2_DisRec_vs_noDisRec_Roe_v}
	\end{figure}

\subsubsection*{Test 2: Isolated shock wave} \label{CB:test2}

From now on, the family of paths given by the viscous profiles of the regularized equation  \eqref{syst_CB_visc} is considered. Let us consider the following initial condition taken from \cite{castro2013entropy}
$$\bu_{0}(x) = (u,v)_{0}(x) = \begin{cases}
     (7.99,11.01) & \text{ if $x<0.5$,}  \\
     (0.25,0.75) & \text{othewise}.
\end{cases}$$
The solution of the Riemann problem consists of a shock wave joining the left and right states.

Figures \ref{fig:Burgers_Test1_O1_vs_O2_DisRec_vs_noDisRec_Godunov_u_100} and  \ref{fig:Burgers_Test1_O1_vs_O2_DisRec_vs_noDisRec_Godunov_v_100} compare the exact solution with the numerical approximations at time $t = 0.03$ obtained with O$p$\_noDisRec(Godunov) and O$p$\_DisRec(Godunov), $p = 1,2$  using a 100-cell mesh: as it can be seen Godunov's method and its second order extension do not capture the discontinuity properly what is not the case for the methods based on the discontinuous reconstruction. In Figures \ref{fig:Burgers_Test1_O1_vs_O2_DisRec_vs_noDisRec_Godunov_u} and  \ref{fig:Burgers_Test1_O1_vs_O2_DisRec_vs_noDisRec_Godunov_v} we compare the exact solution with the numerical approximations obtained with the same methods but using a 1000-cell mesh and again Godunov's method and its second order extension are not able to capture the right solution. CFL = 0.5 has been considered.

\begin{figure}[h]
		\begin{subfigure}{0.5\textwidth}
			\includegraphics[width=1.1\linewidth]{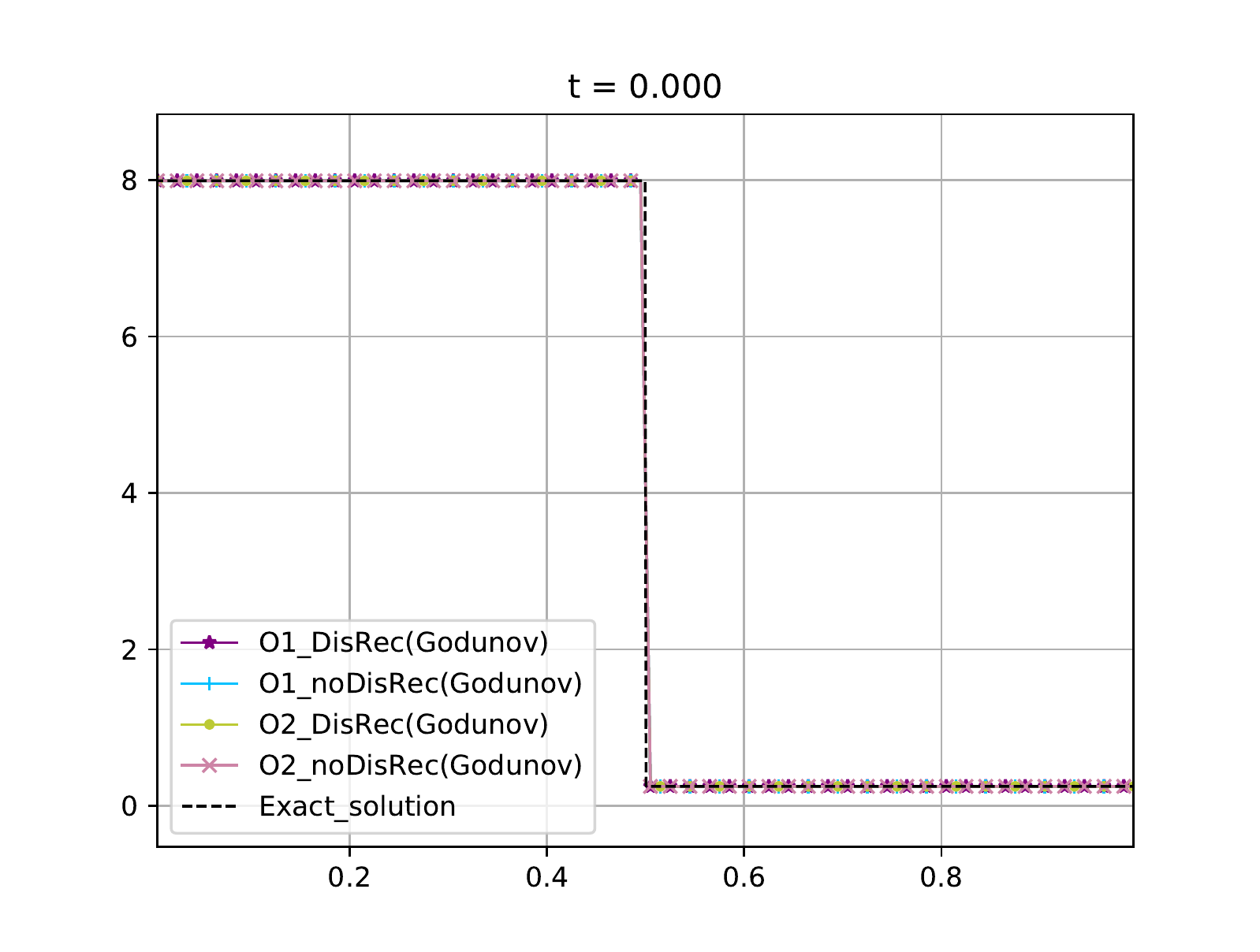}
		\end{subfigure}
		\begin{subfigure}{0.5\textwidth}
				\includegraphics[width=1.1\linewidth]{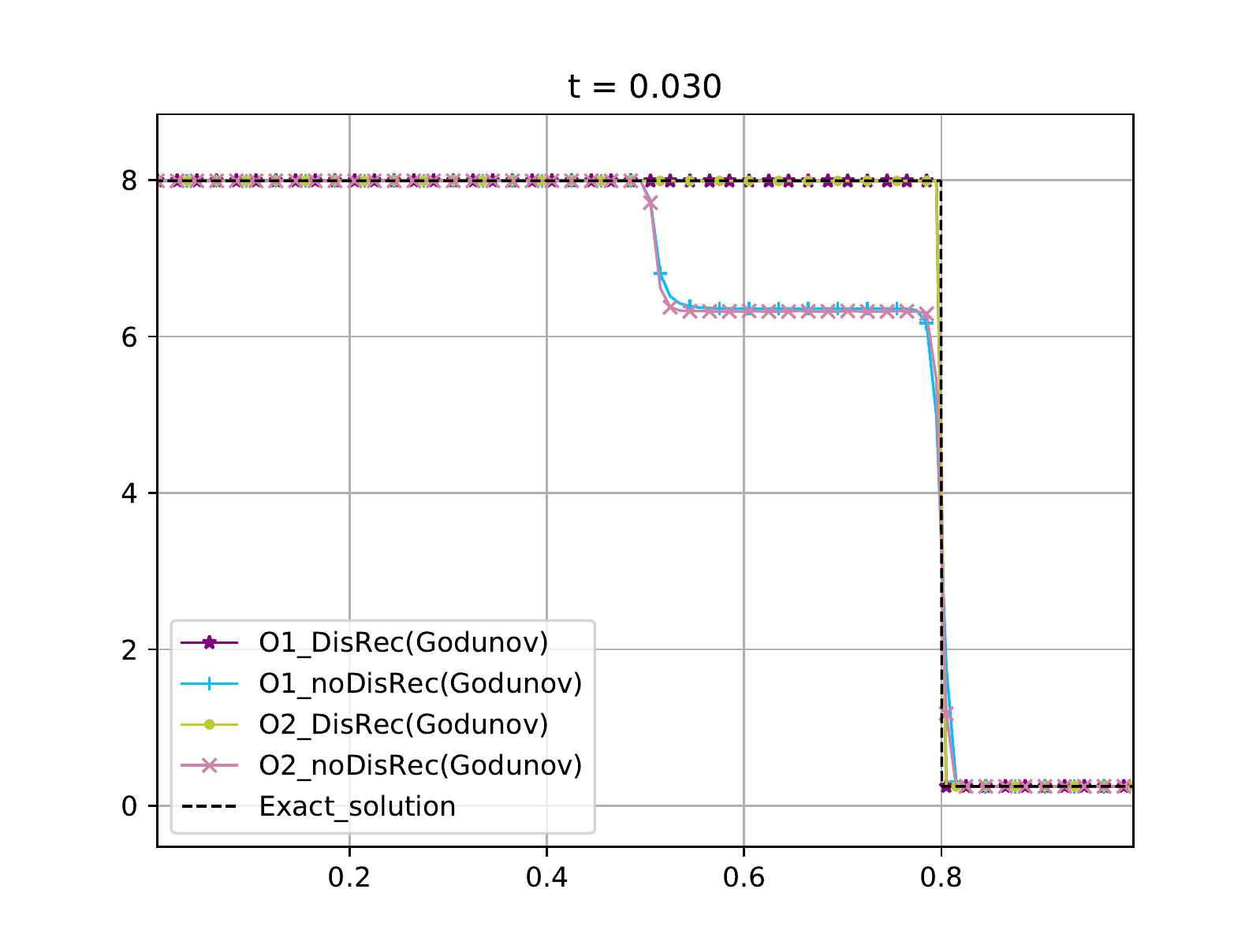}
		\end{subfigure}
		\caption{Coupled Burgers system. Test 2: variable $u$. Left: initial condition. Right: exact solution and numerical solutions obtained at time $t = 0.03$ with 100 cells.}
		\label{fig:Burgers_Test1_O1_vs_O2_DisRec_vs_noDisRec_Godunov_u_100}
	\end{figure}
	
	\begin{figure}[h]
		\begin{subfigure}{0.5\textwidth}
			\includegraphics[width=1.1\linewidth]{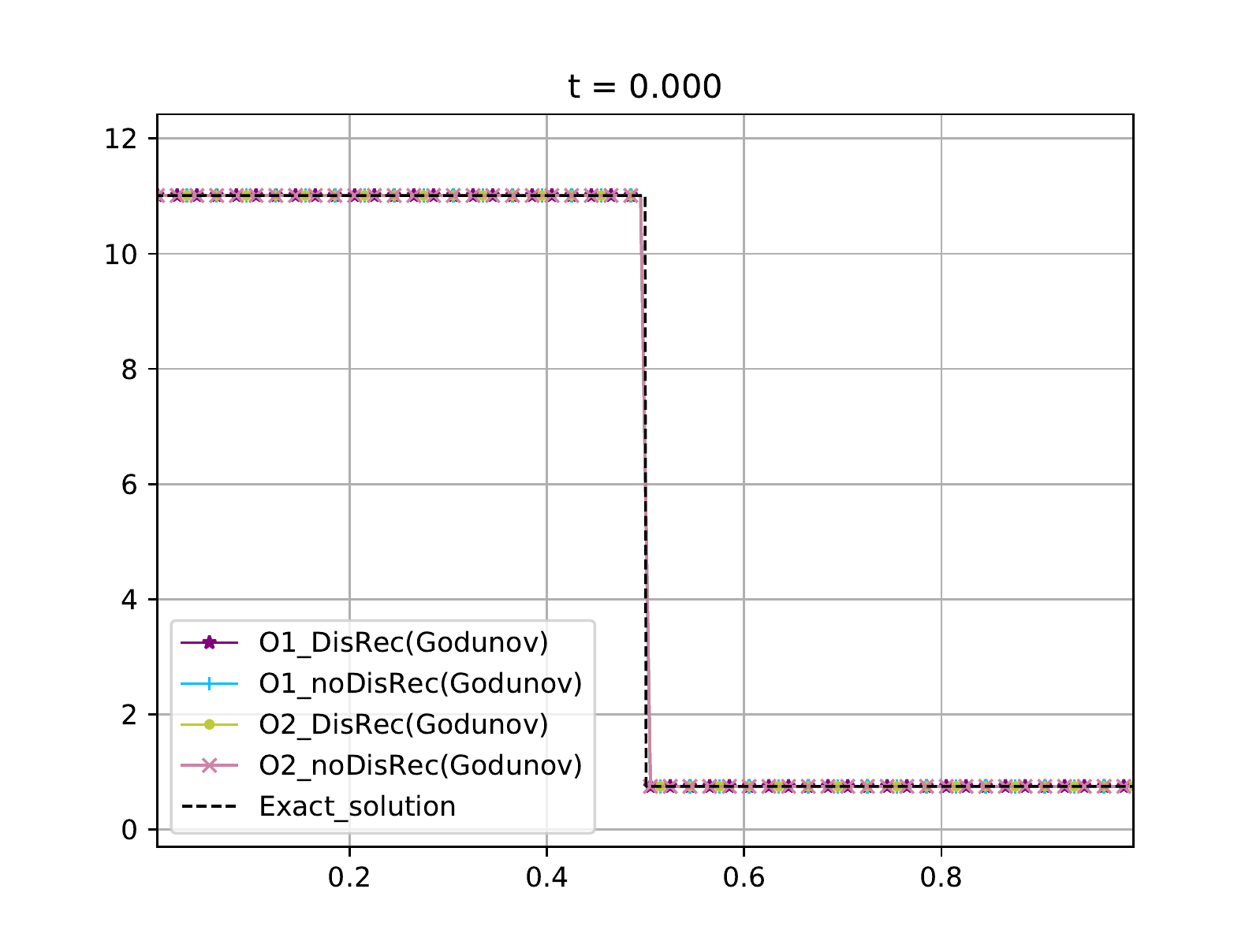}
		\end{subfigure}
		\begin{subfigure}{0.5\textwidth}
				\includegraphics[width=1.1\linewidth]{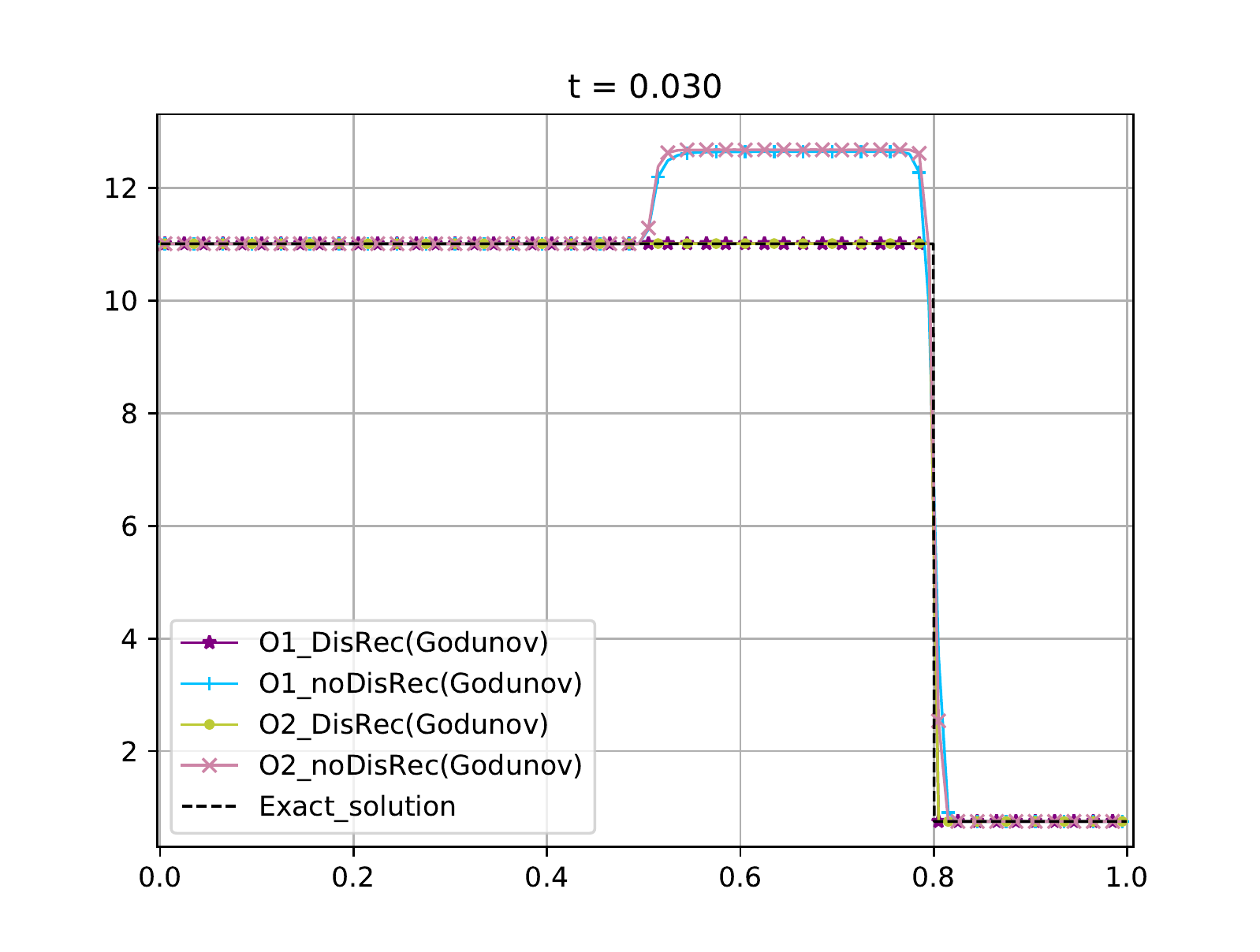}
		\end{subfigure}
		\caption{Coupled Burgers system. Test 2: variable $v$. Left: initial condition. Right: exact solution and numerical solutions obtained at time $t = 0.03$ with 100 cells.}
		\label{fig:Burgers_Test1_O1_vs_O2_DisRec_vs_noDisRec_Godunov_v_100}
	\end{figure}

\begin{figure}[h]
		\begin{subfigure}{0.5\textwidth}
			\includegraphics[width=1.1\linewidth]{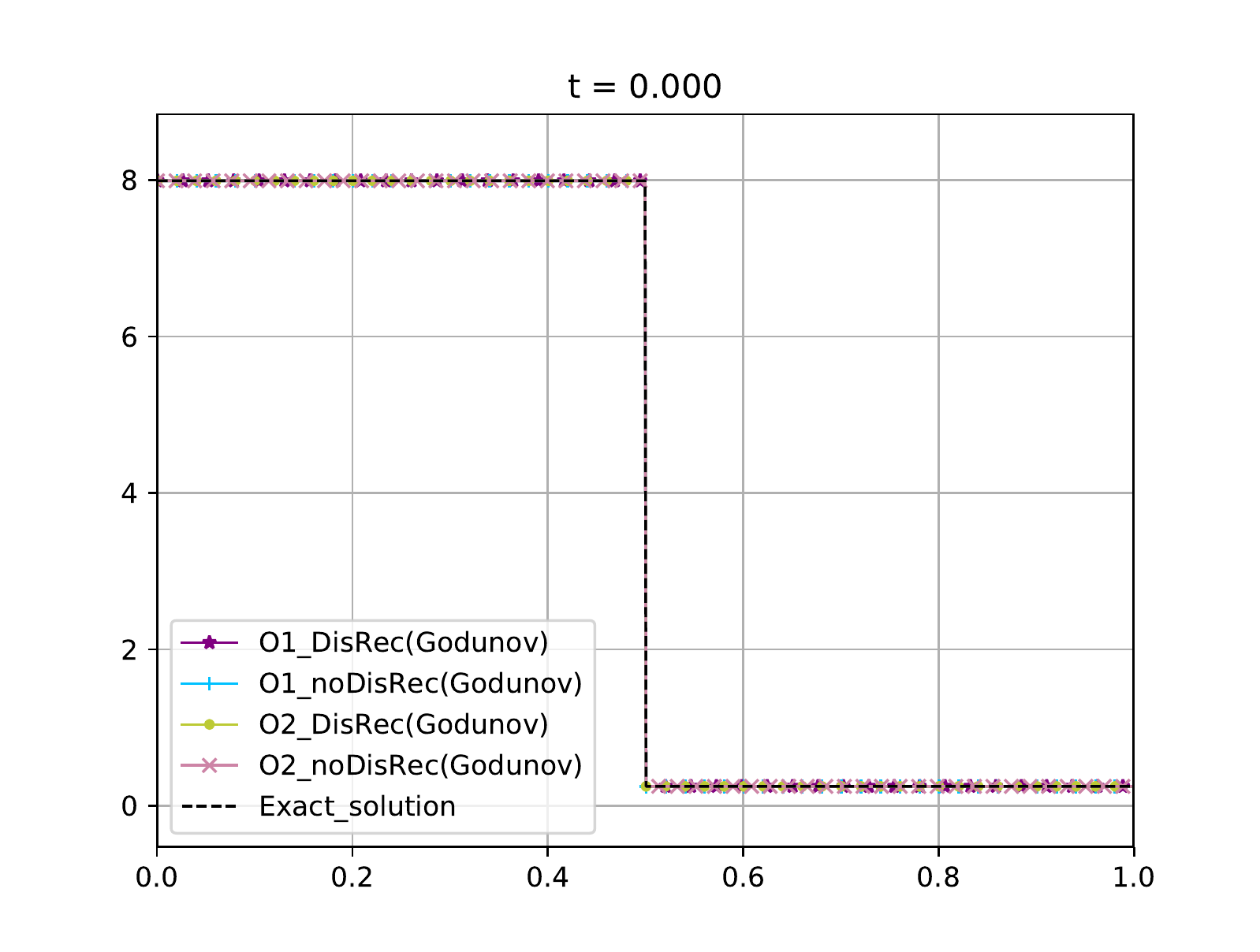}
		\end{subfigure}
		\begin{subfigure}{0.5\textwidth}
				\includegraphics[width=1.1\linewidth]{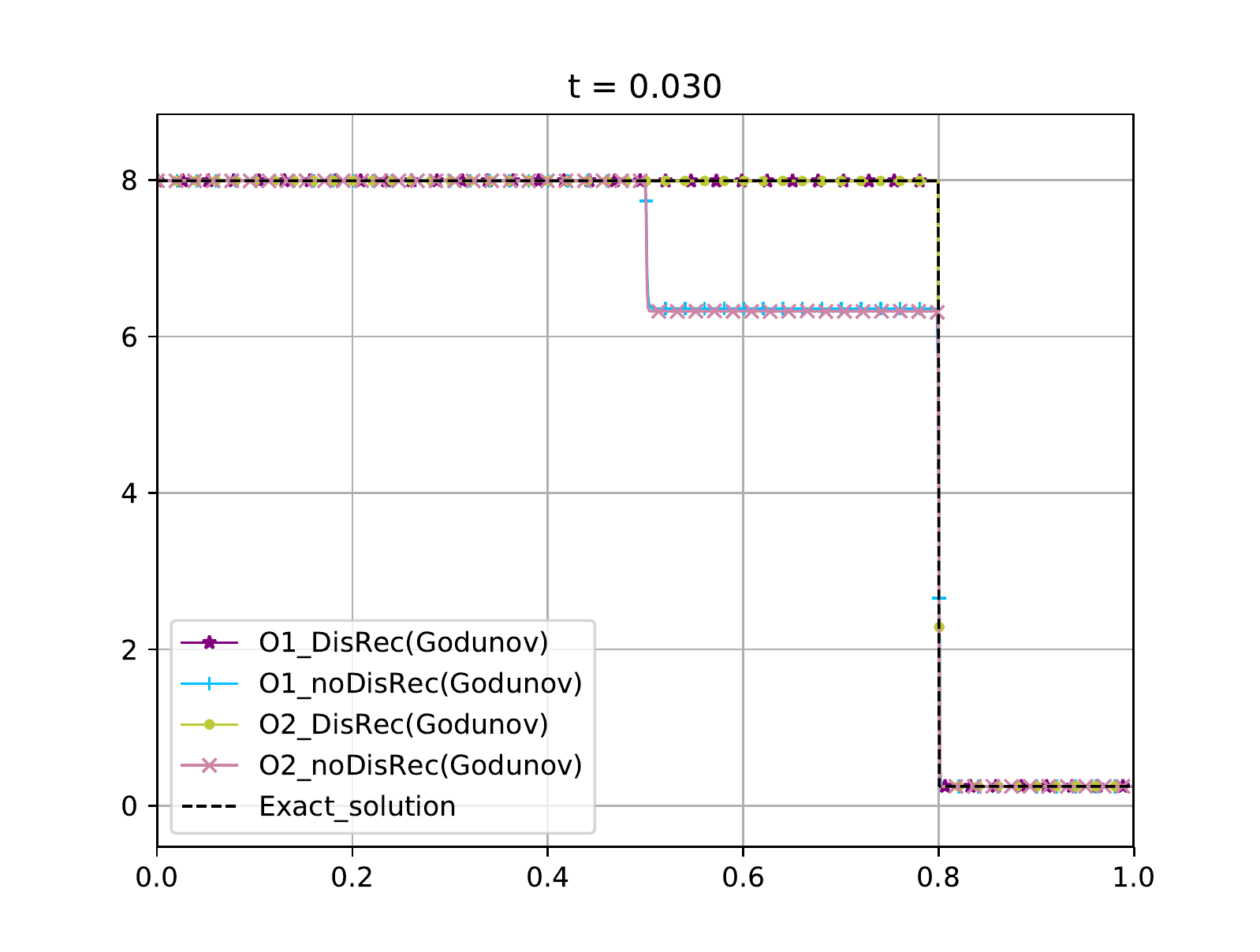}
		\end{subfigure}
		\caption{Coupled Burgers system. Test 2: variable $u$. Left: initial condition. Right: exact solution and numerical solutions obtained at time $t = 0.03$ with 1000 cells.}
		\label{fig:Burgers_Test1_O1_vs_O2_DisRec_vs_noDisRec_Godunov_u}
	\end{figure}
	
	\begin{figure}[h]
		\begin{subfigure}{0.5\textwidth}
			\includegraphics[width=1.1\linewidth]{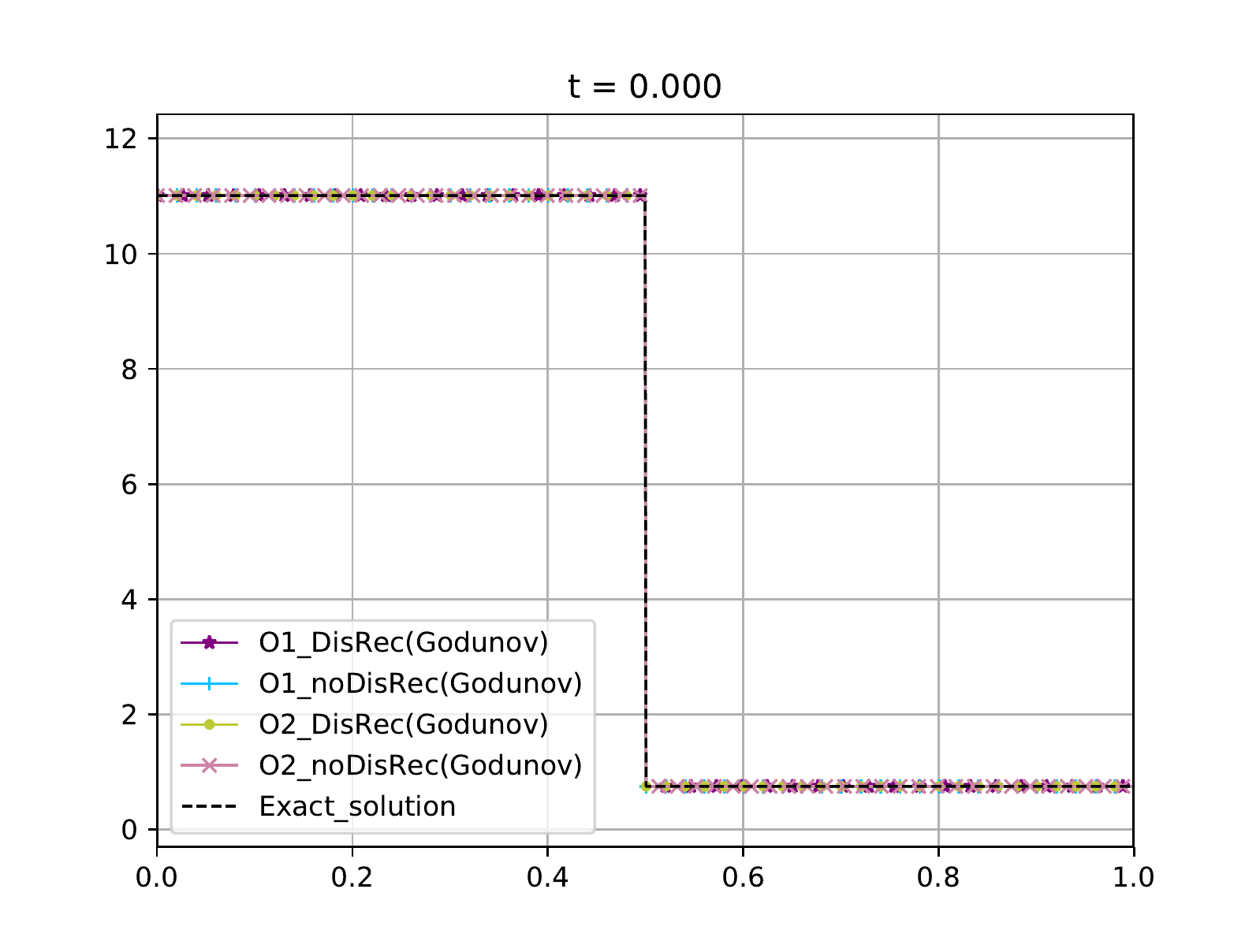}
		\end{subfigure}
		\begin{subfigure}{0.5\textwidth}
				\includegraphics[width=1.1\linewidth]{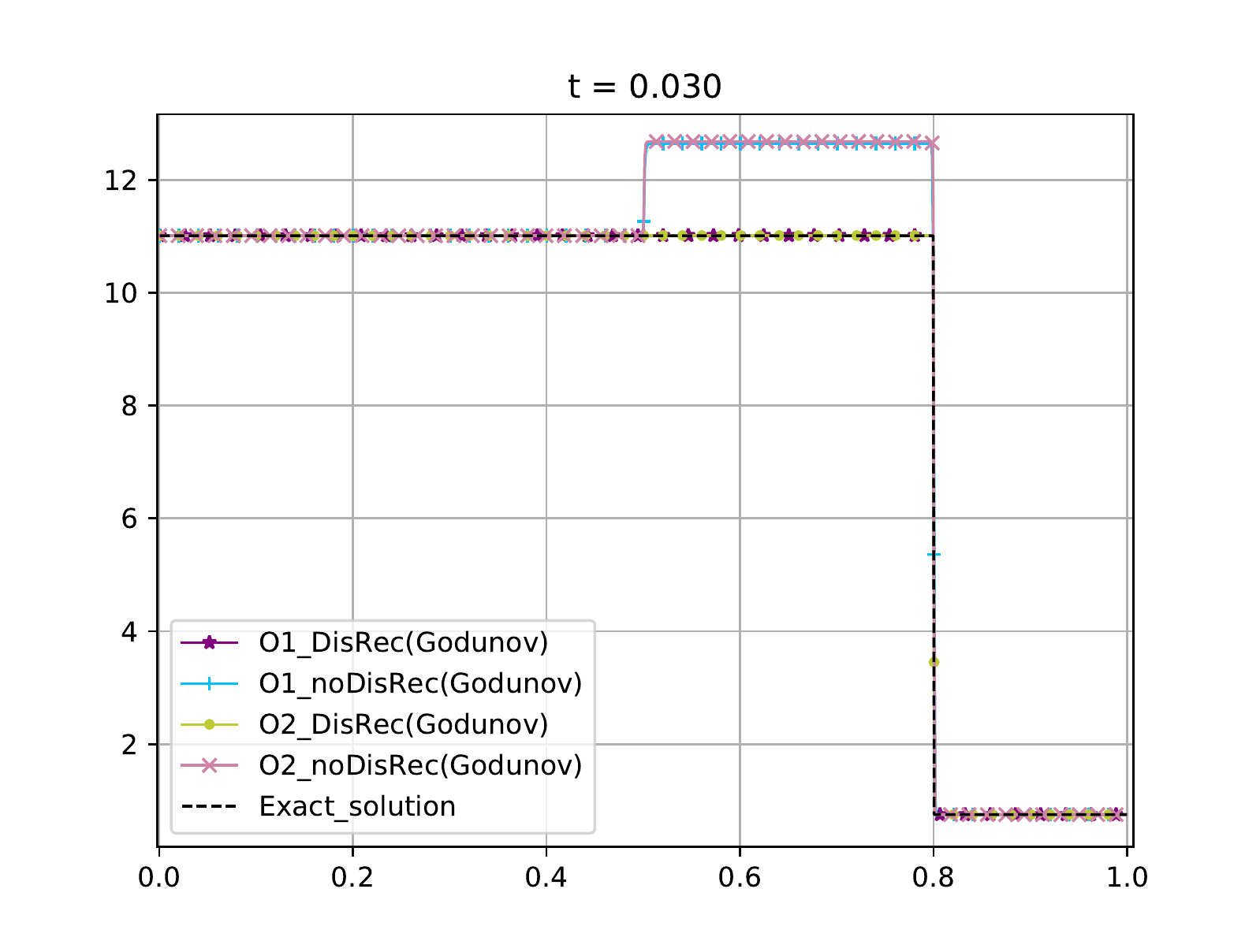}
		\end{subfigure}
		\caption{Coupled Burgers system. Test 2: variable $v$. Left: initial condition. Right: exact solution and numerical solutions obtained at time $t = 0.03$ with 1000 cells.}
		\label{fig:Burgers_Test1_O1_vs_O2_DisRec_vs_noDisRec_Godunov_v}
	\end{figure}

\subsubsection*{Test 3: Contact discontinuity + shock wave}
We  consider now the initial condition
$$\bu_{0}(x) = (u,v)_{0}(x) = \begin{cases}
     (5,1) & \text{if $x<0.5$,}  \\
     (1,2) & \text{otherwise.}
\end{cases}$$
The solution of the corresponding Riemann problems consists of a stationary contact discontinuity followed by a shock.
Figures \ref{fig:Burgers_Test2_O1_vs_O2_DisRec_vs_noDisRec_Godunov_u} and  \ref{fig:Burgers_Test2_O1_vs_O2_DisRec_vs_noDisRec_Godunov_v} show the exact and the numerical solutions at time
$t = 0.05$ using a 1000-cell mesh  and CFL = 0.5. The conclusions are the same: the in-cell discontinuous reconstruction methods of order 1 and 2 get the exact solution while the standard Godunov methods do not.

\begin{figure}[h]
		\begin{subfigure}{0.33\textwidth}
			\includegraphics[width=1.1\linewidth]{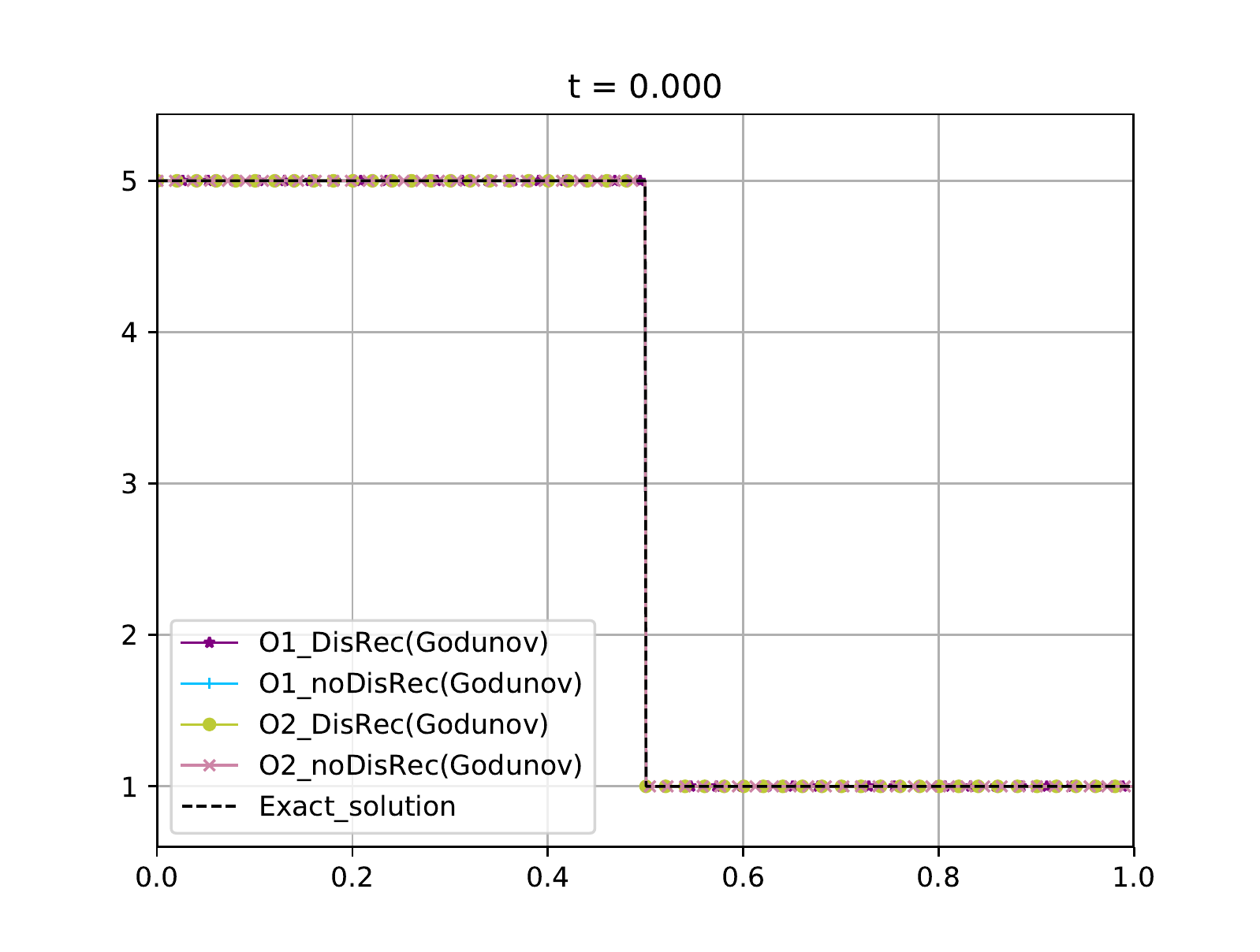}
		\end{subfigure}
		\begin{subfigure}{0.33\textwidth}
				\includegraphics[width=1.1\linewidth]{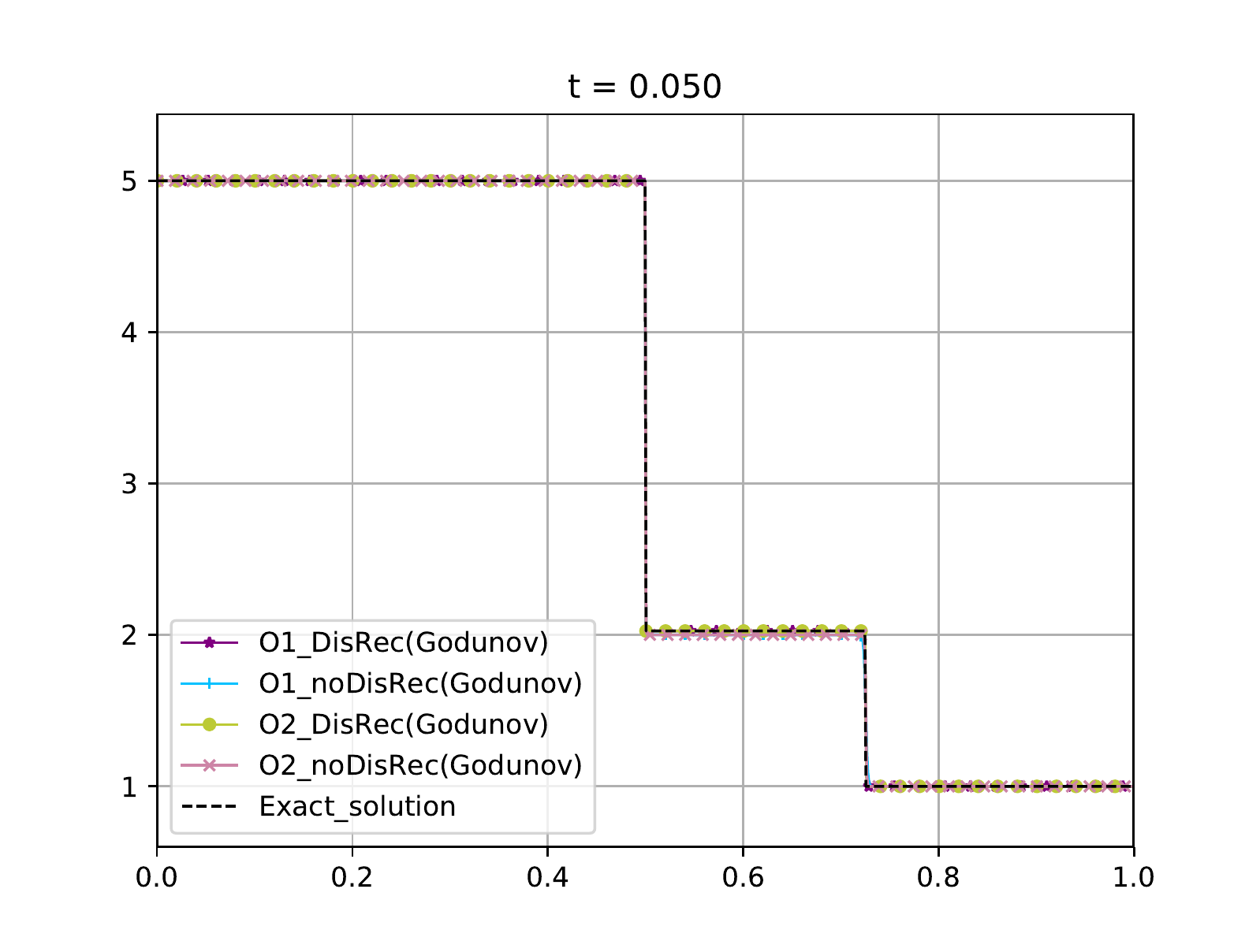}
		\end{subfigure}
		\begin{subfigure}{0.33\textwidth}
				\includegraphics[width=1.1\linewidth]{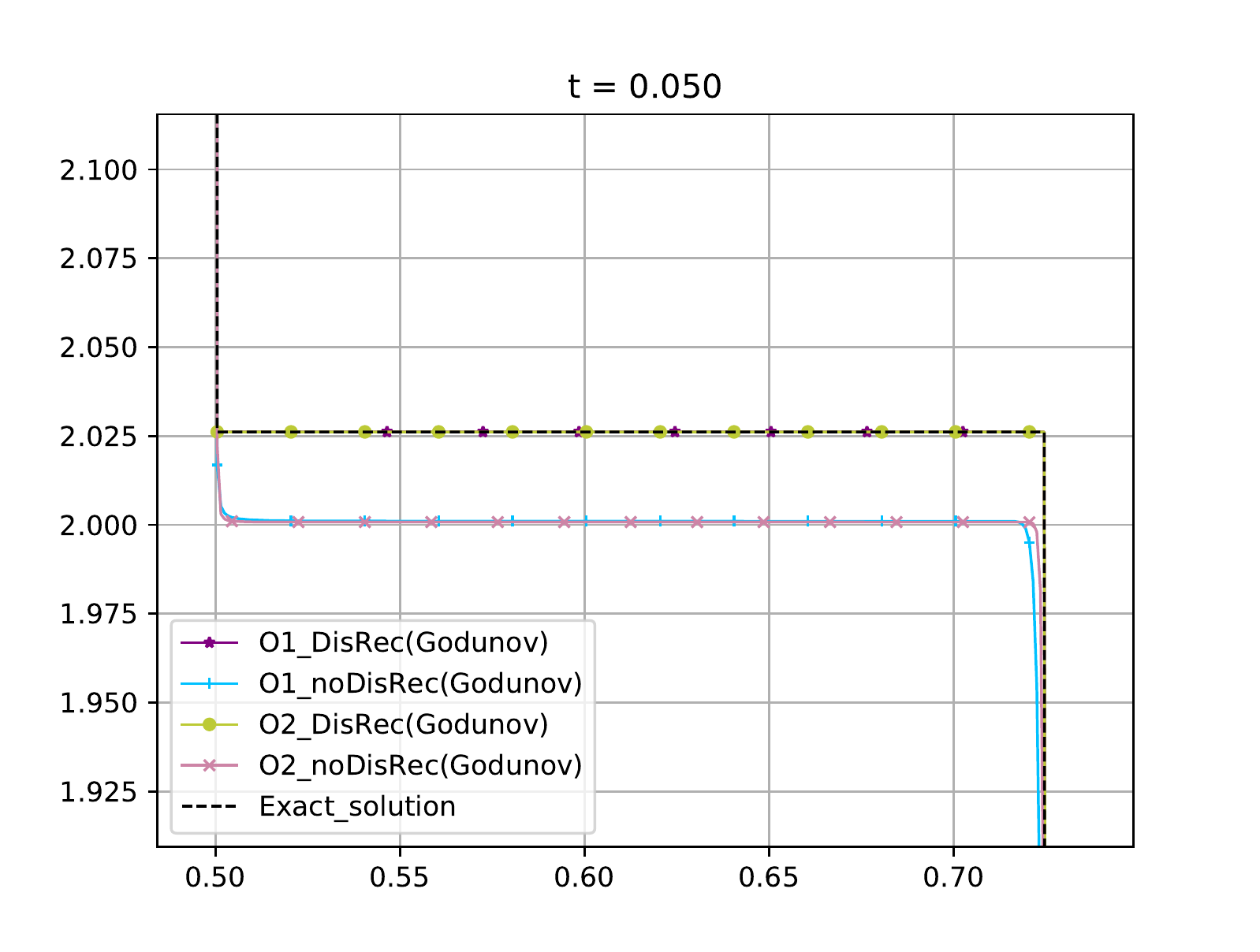}
				\caption*{Zoom}
		\end{subfigure}
		\caption{Coupled Burgers system. Test 3: variable $u$. Left: initial condition. Center: exact solution and numerical solutions obtained at time $t = 0.05$ with 1000 cells. Right: zoom.}
		\label{fig:Burgers_Test2_O1_vs_O2_DisRec_vs_noDisRec_Godunov_u}
	\end{figure}
	
	\begin{figure}[h]
		\begin{subfigure}{0.33\textwidth}
			\includegraphics[width=1.1\linewidth]{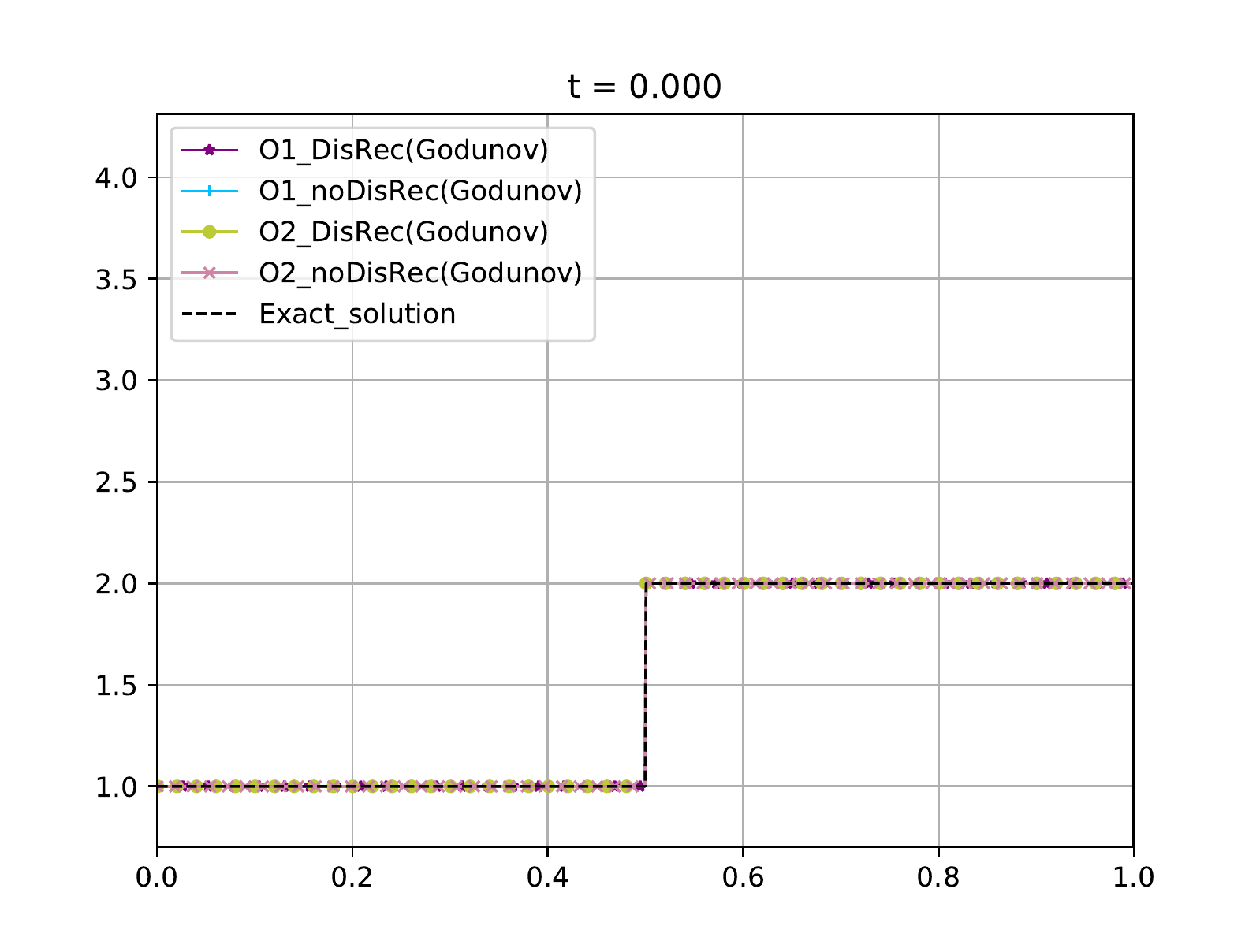}
		\end{subfigure}
		\begin{subfigure}{0.33\textwidth}
				\includegraphics[width=1.1\linewidth]{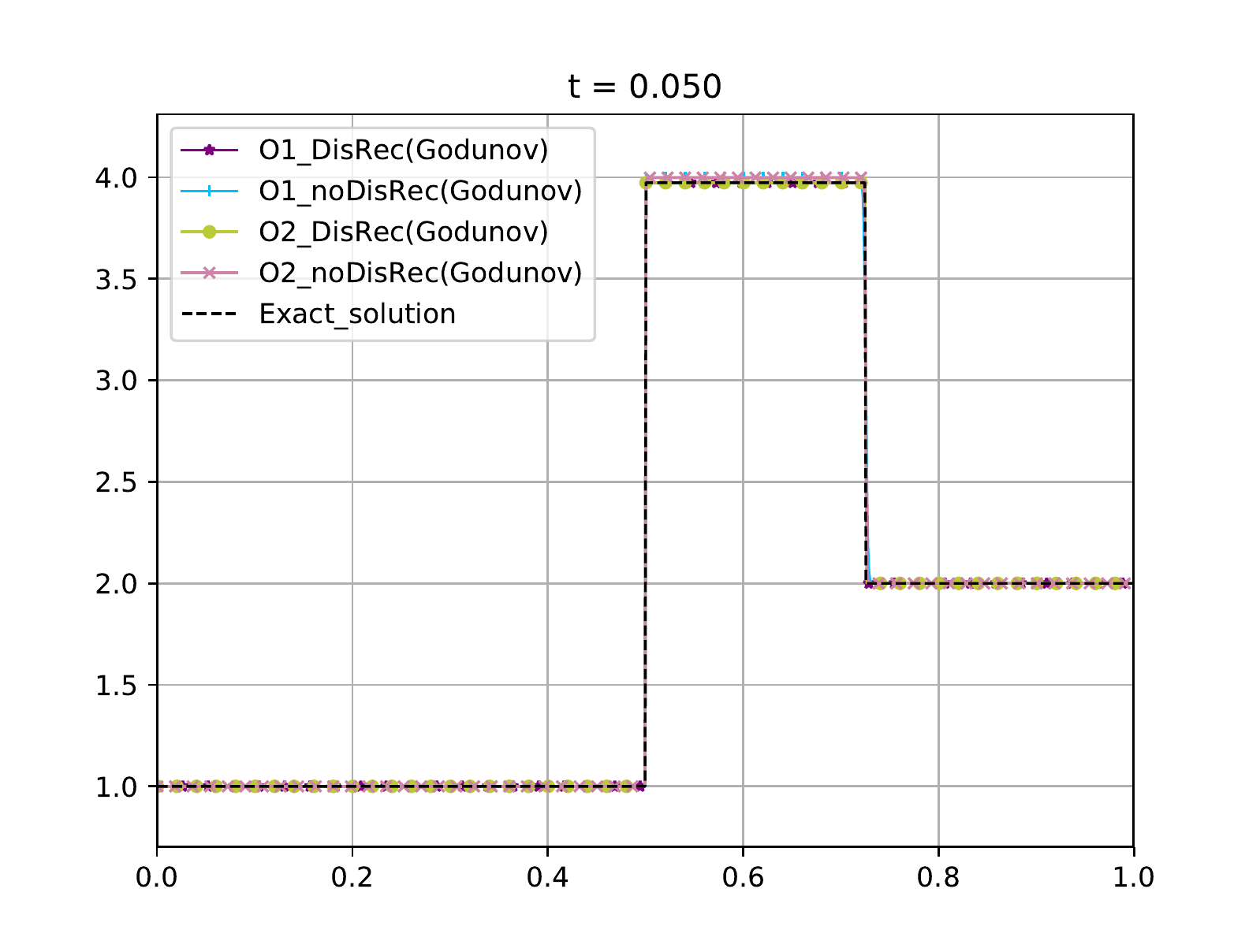}
		\end{subfigure}
		\begin{subfigure}{0.33\textwidth}
				\includegraphics[width=1.1\linewidth]{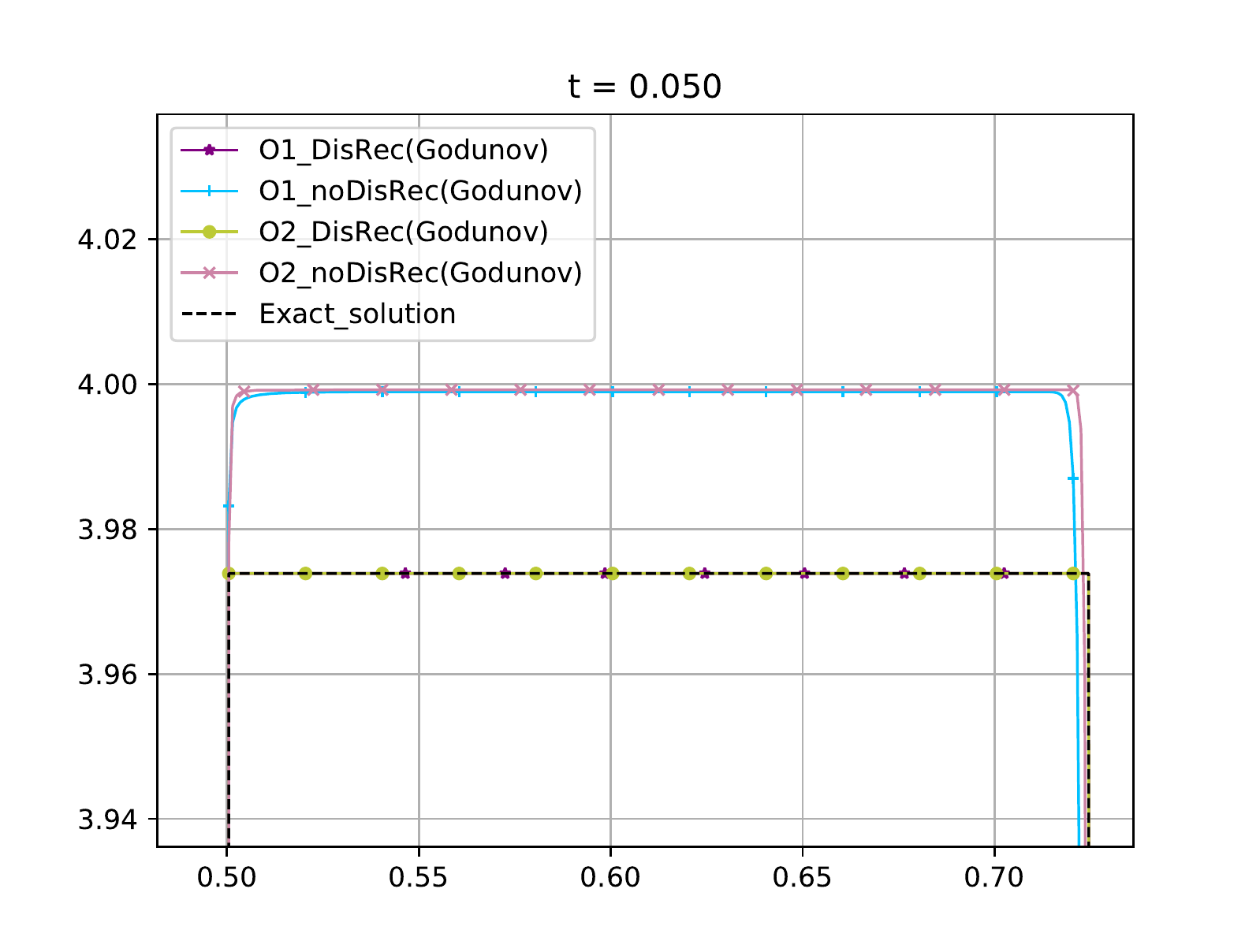}
				\caption*{Zoom}
		\end{subfigure}
		\caption{Coupled Burgers system. Test 3: variable $u$. Left: initial condition. Center: exact solution and numerical solutions obtained at time $t = 0.05$ with 1000 cells. Right: zoom.}
		\label{fig:Burgers_Test2_O1_vs_O2_DisRec_vs_noDisRec_Godunov_v}
	\end{figure}
	
\subsubsection*{Test 4: Contact discontinuity + rarefaction}
We consider the initial condition
$$\bu_{0}(x) = (u,v)_{0}(x) =\begin{cases}
     (1,2) & \text{ if $x<0.5$,}  \\
     (5,1) & \text{otherwise.}
\end{cases}$$
The solution of the corresponding Riemann problem consists  of a stationary contact discontinuity followed by a rarefaction.

Figures \ref{fig:Burgers_Test3_O1_vs_O2_DisRec_vs_noDisRec_Godunov_u} and  \ref{fig:Burgers_Test3_O1_vs_O2_DisRec_vs_noDisRec_Godunov_v} show the exact and the numerical solutions at time
$t = 0.05$ using a 1000-cell mesh. In this case all the methods converge to the exact solution but the second order one captures better the solution, as expected. 

\begin{figure}[h]
		\begin{subfigure}{0.33\textwidth}
			\includegraphics[width=1.1\linewidth]{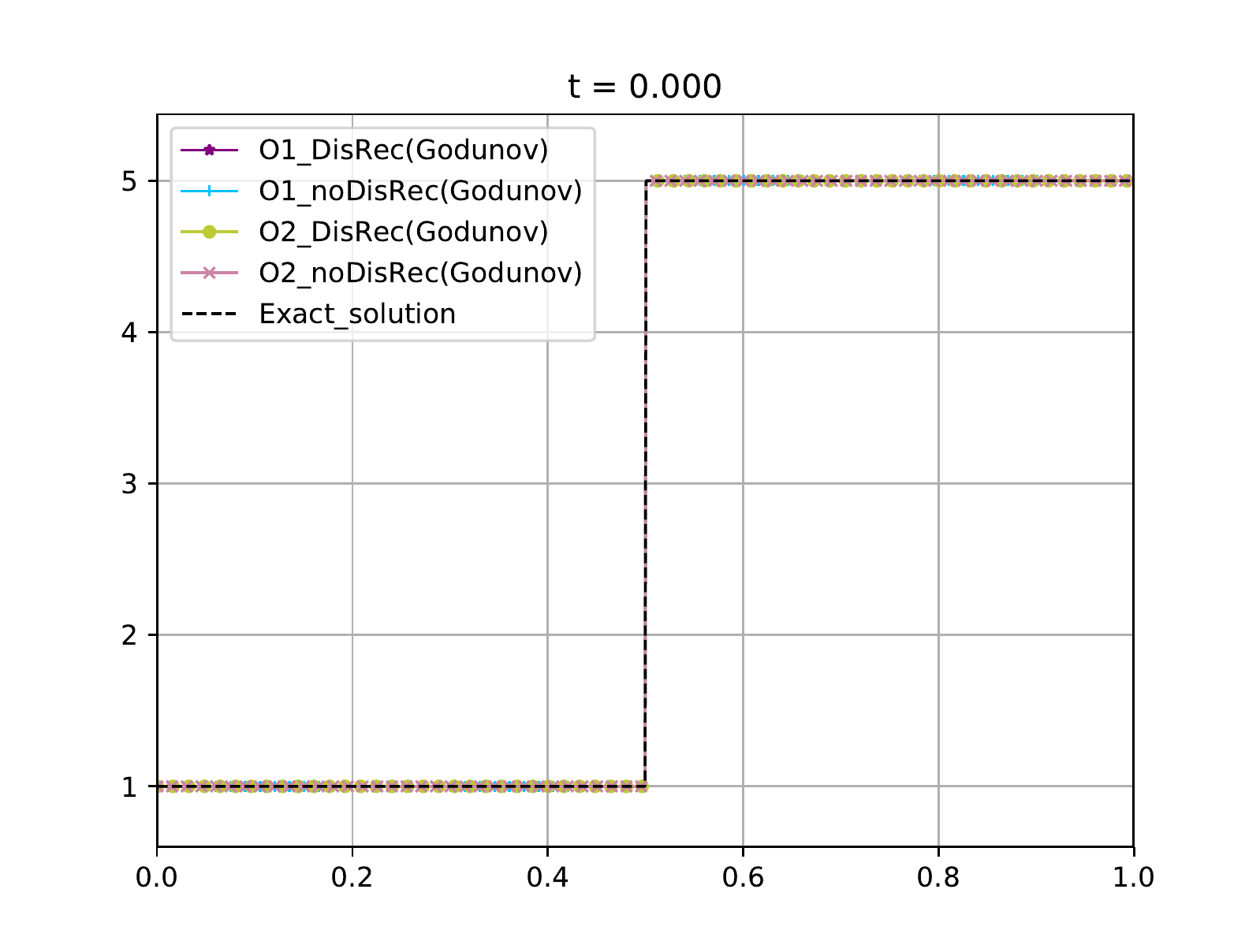}
		\end{subfigure}
		\begin{subfigure}{0.33\textwidth}
				\includegraphics[width=1.1\linewidth]{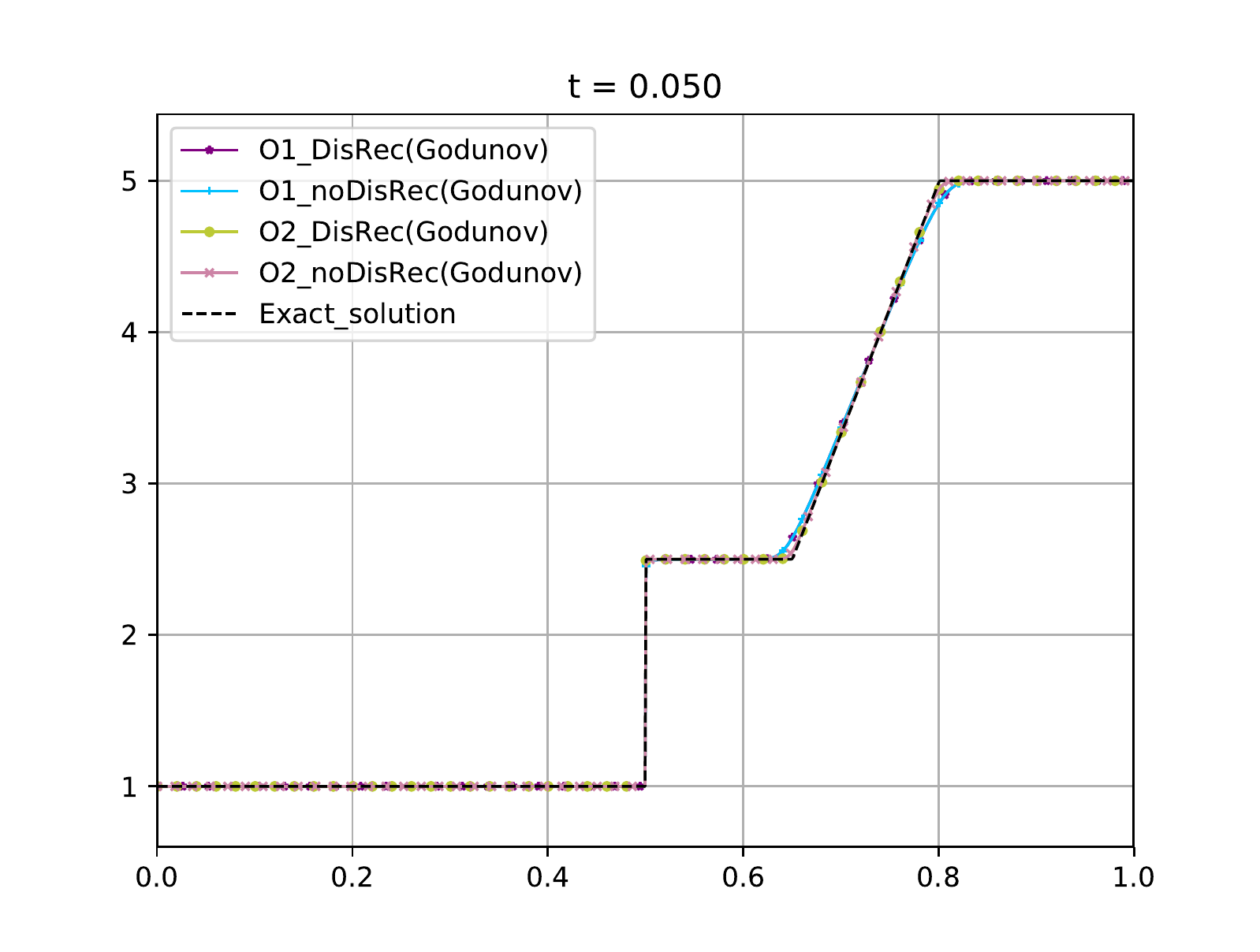}
		\end{subfigure}
		\begin{subfigure}{0.33\textwidth}
				\includegraphics[width=1.1\linewidth]{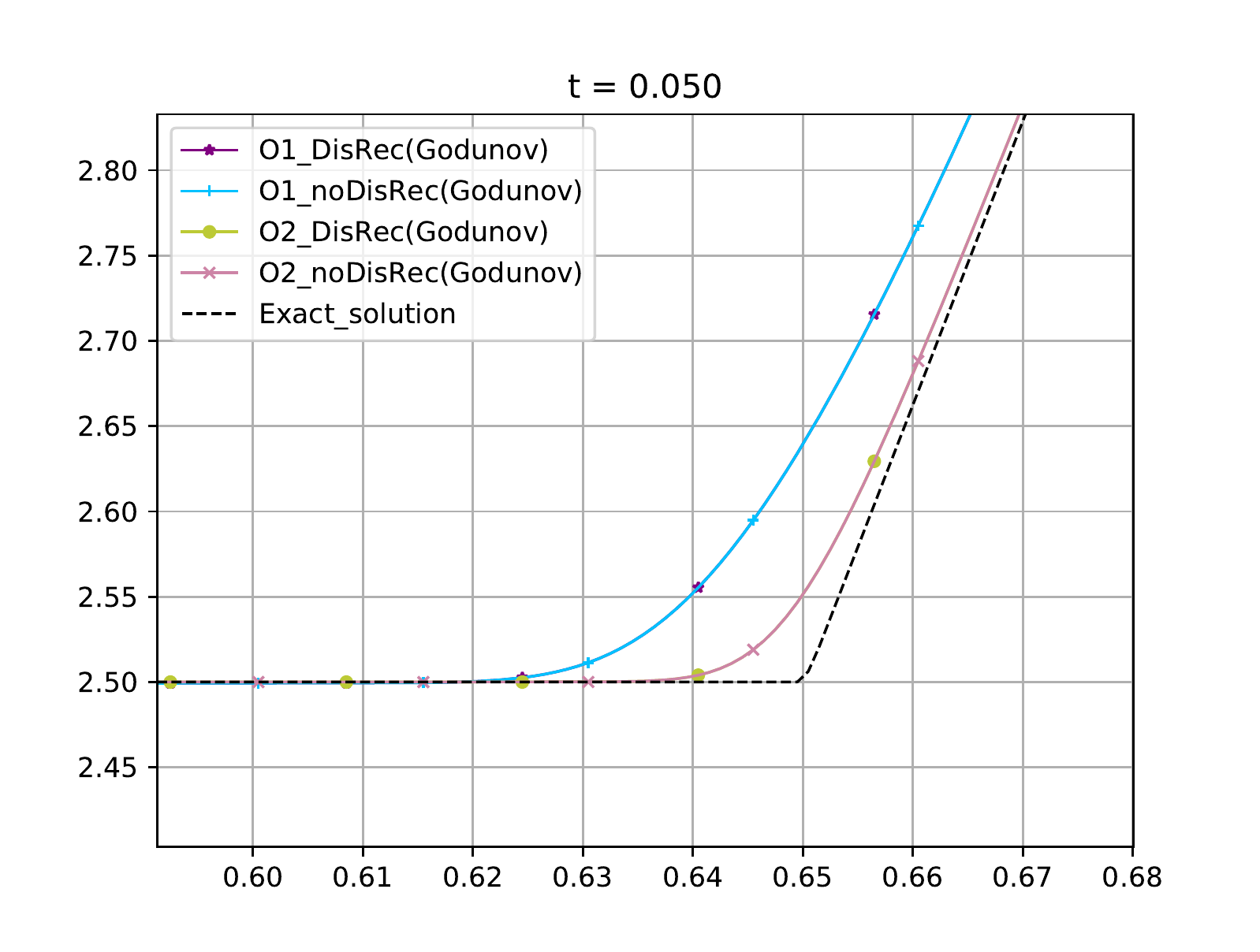}
				\caption*{Zoom}
		\end{subfigure}
		\caption{Coupled Burgers system. Test 4: variable $u$. Left: initial condition. Center: exact solution and numerical solutions obtained at time $t = 0.05$ with 1000 cells. Right: zoom.}
		\label{fig:Burgers_Test3_O1_vs_O2_DisRec_vs_noDisRec_Godunov_u}
	\end{figure}
	
	\begin{figure}[h]
		\begin{subfigure}{0.33\textwidth}
			\includegraphics[width=1.1\linewidth]{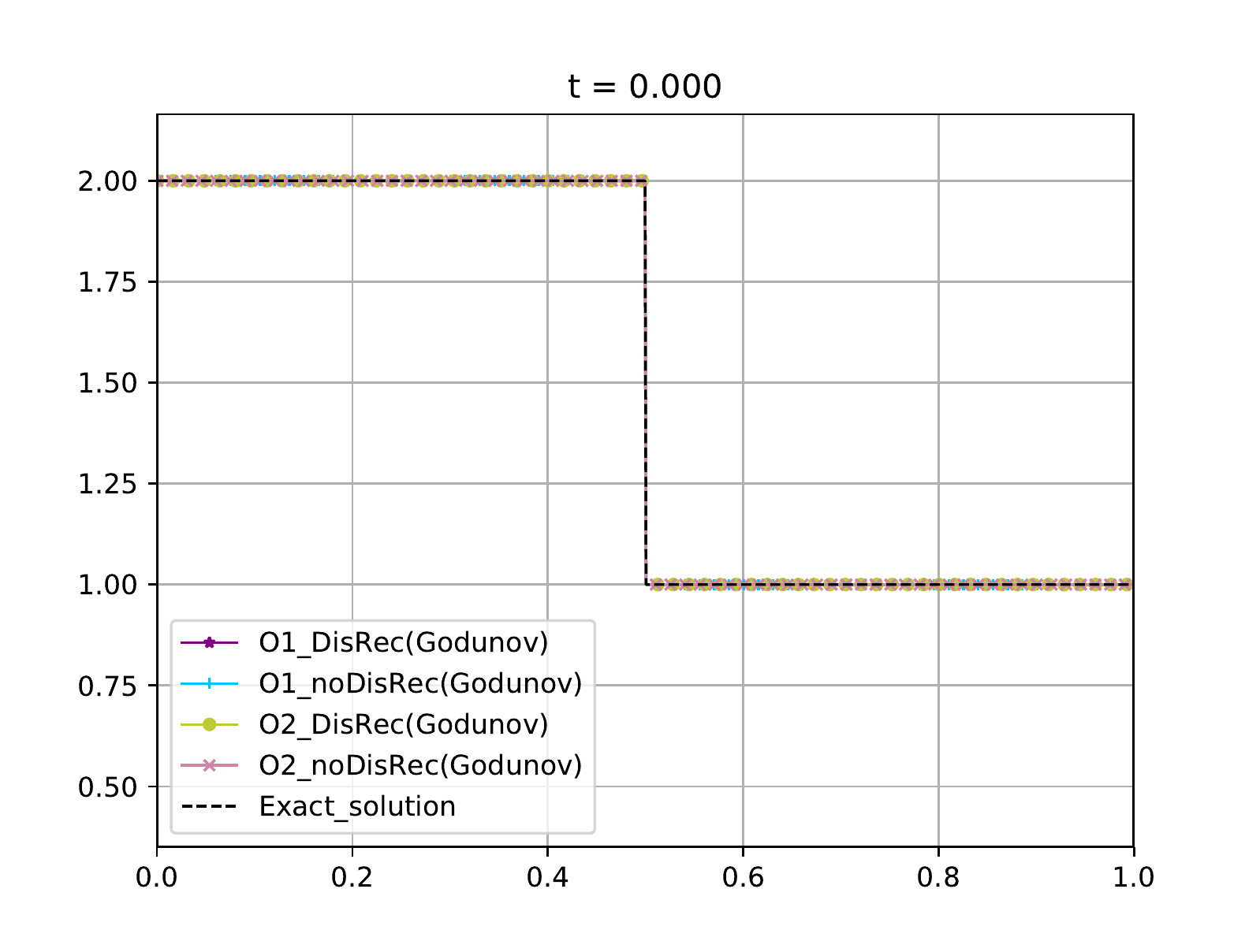}
		\end{subfigure}
		\begin{subfigure}{0.33\textwidth}
				\includegraphics[width=1.1\linewidth]{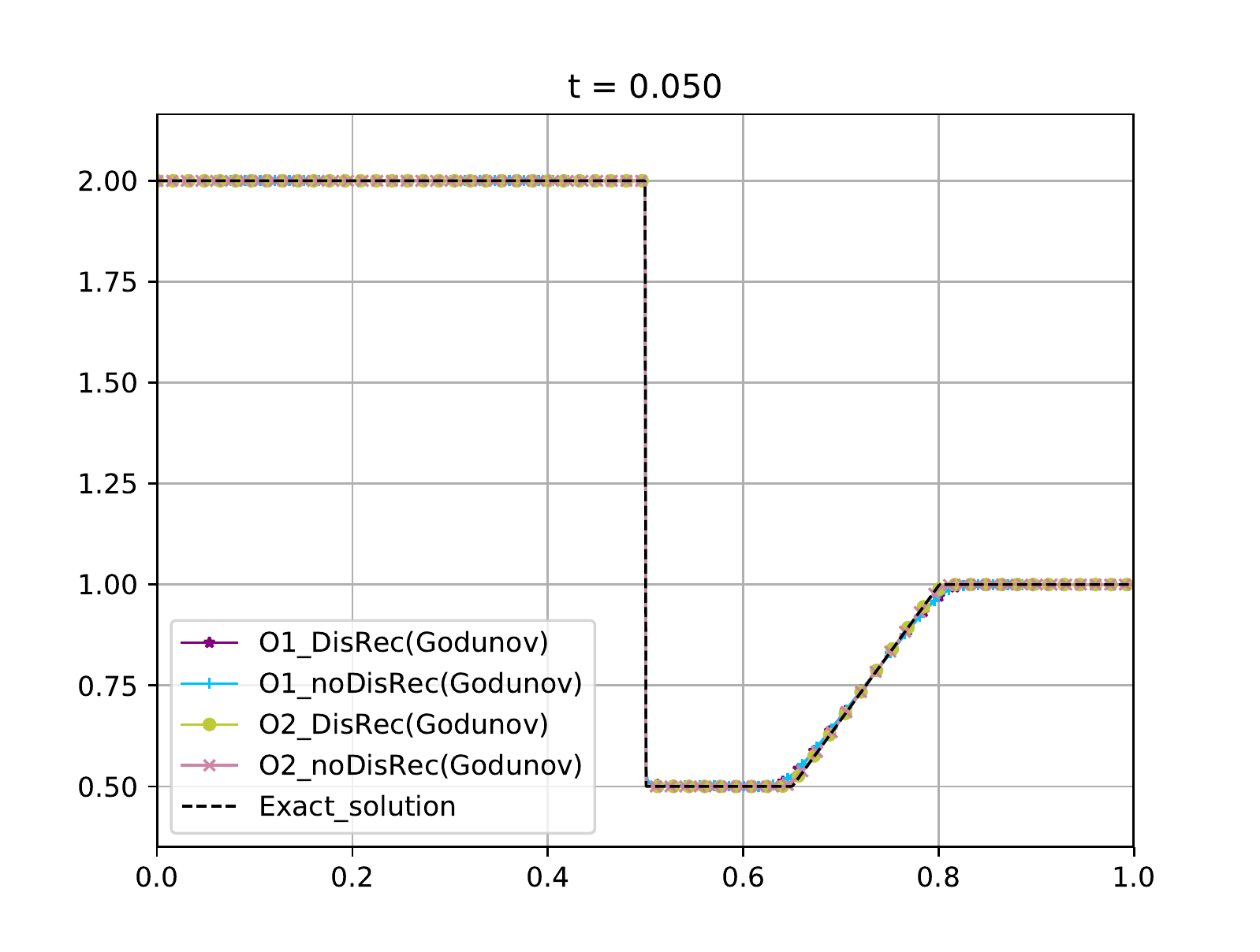}
		\end{subfigure}
		\begin{subfigure}{0.33\textwidth}
				\includegraphics[width=1.1\linewidth]{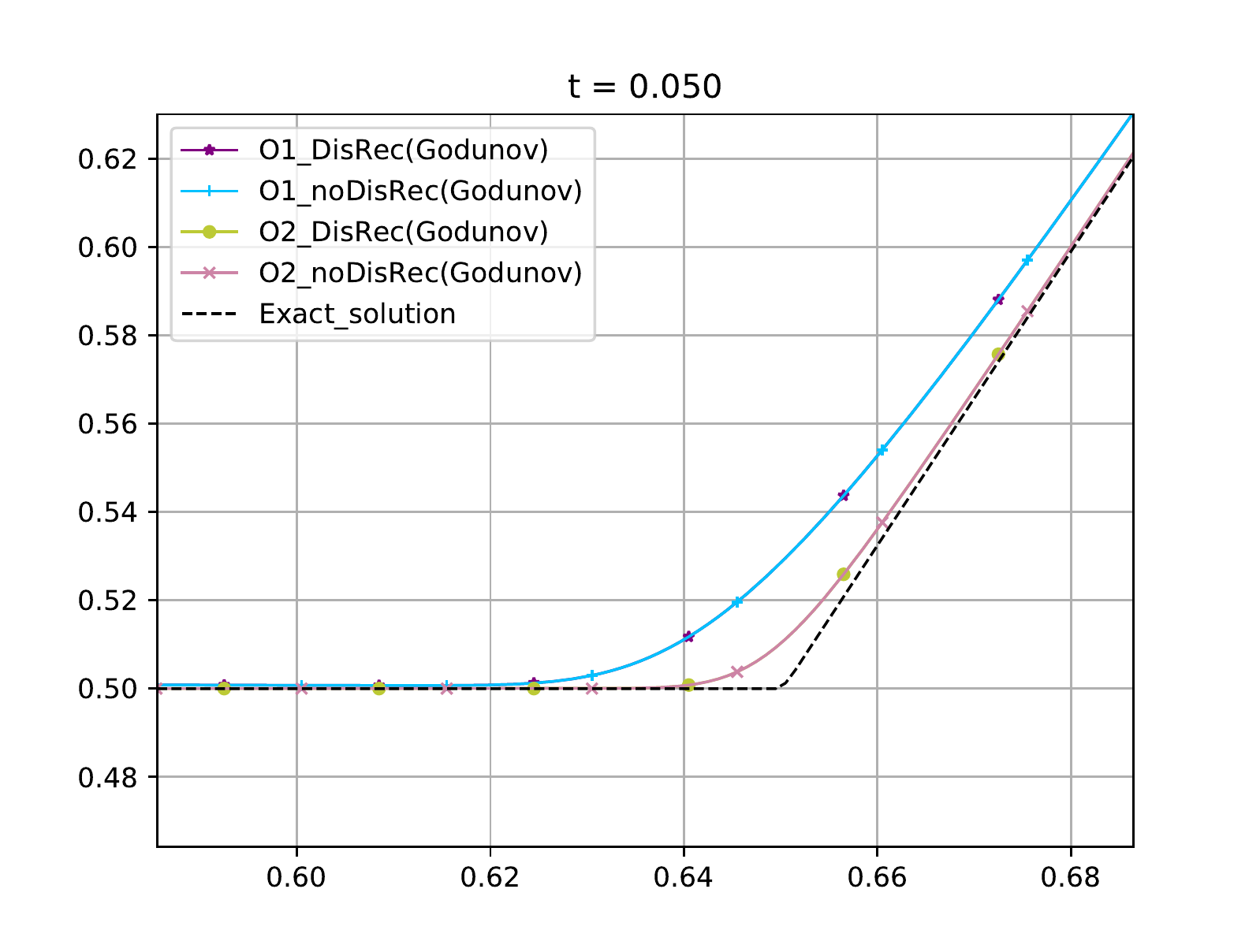}
				\caption*{Zoom}
		\end{subfigure}
		\caption{Coupled Burgers system. Test 4: variable $v$. Left: initial condition. Center: exact solution and numerical solutions obtained at time $t = 0.05$ with 1000 cells. Right: zoom.}
		\label{fig:Burgers_Test3_O1_vs_O2_DisRec_vs_noDisRec_Godunov_v}
	\end{figure}


\subsubsection*{Test 5: Stationary solution}

We consider the initial condition
\begin{equation}\label{TestWB}
    \bu_{0}(x) = (u,v)_{0}(x) =(\sin(x),1-\sin(x)),
\end{equation}
that is a stationary solution of the system (\ref{syst_CB}). We show in Figure \ref{fig:Burgers_TestWB_O1_O2_DisRec_Godunov_1000} the numerical solution obtained with the first and second order discontinuous in-cell reconstruction using a 1000-mesh. The results in Figure \ref{fig:Burgers_TestWB_O1_O2_DisRec_Godunov_error_comparison} and Table \ref{tab:Error_TestWB} show that the both schemes are well-balanced.

\begin{figure}[h]
		\begin{subfigure}{0.5\textwidth}
			\includegraphics[width=1.1\linewidth]{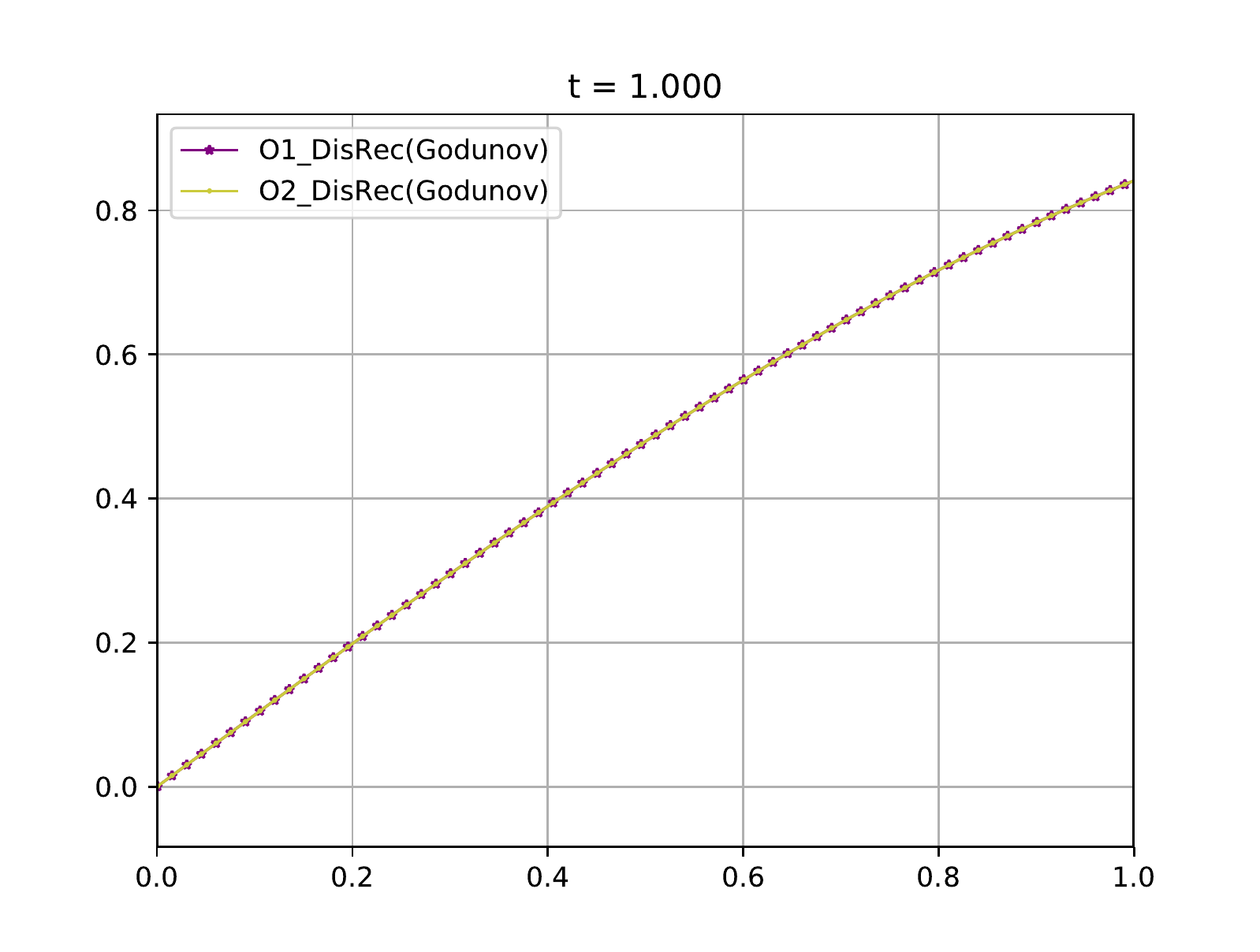}
		\end{subfigure}
		\begin{subfigure}{0.5\textwidth}
				\includegraphics[width=1.1\linewidth]{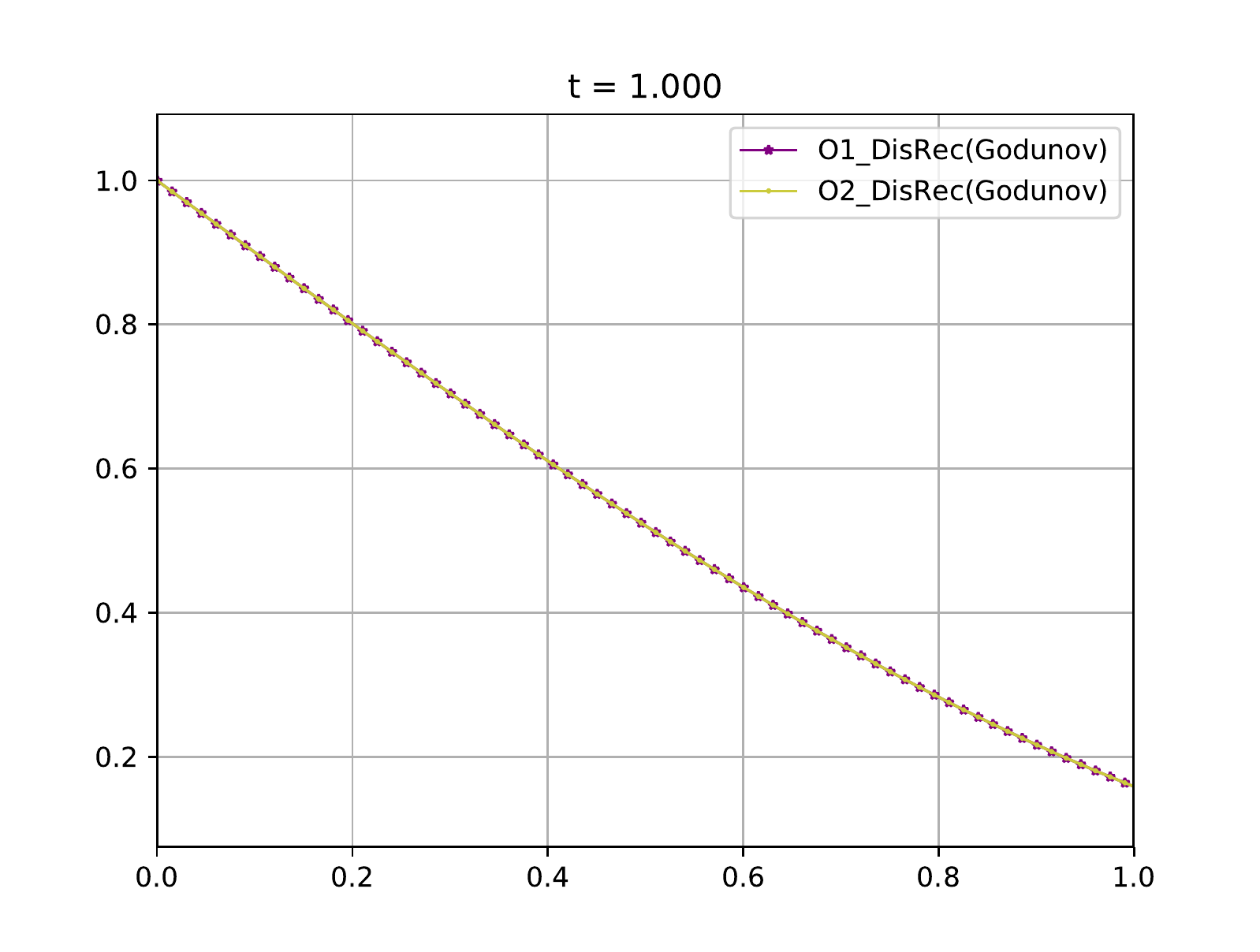}
		\end{subfigure}
		\caption{Coupled Burgers system. Test 5: numerical solution of (\ref{TestWB}) at time $t=1.00$ with 1000 cells. Left: variable $u$. Right: variable $v$.}
		\label{fig:Burgers_TestWB_O1_O2_DisRec_Godunov_1000}
	\end{figure}
	
\begin{figure}[h]
		\begin{subfigure}{0.5\textwidth}
			\includegraphics[width=1.1\linewidth]{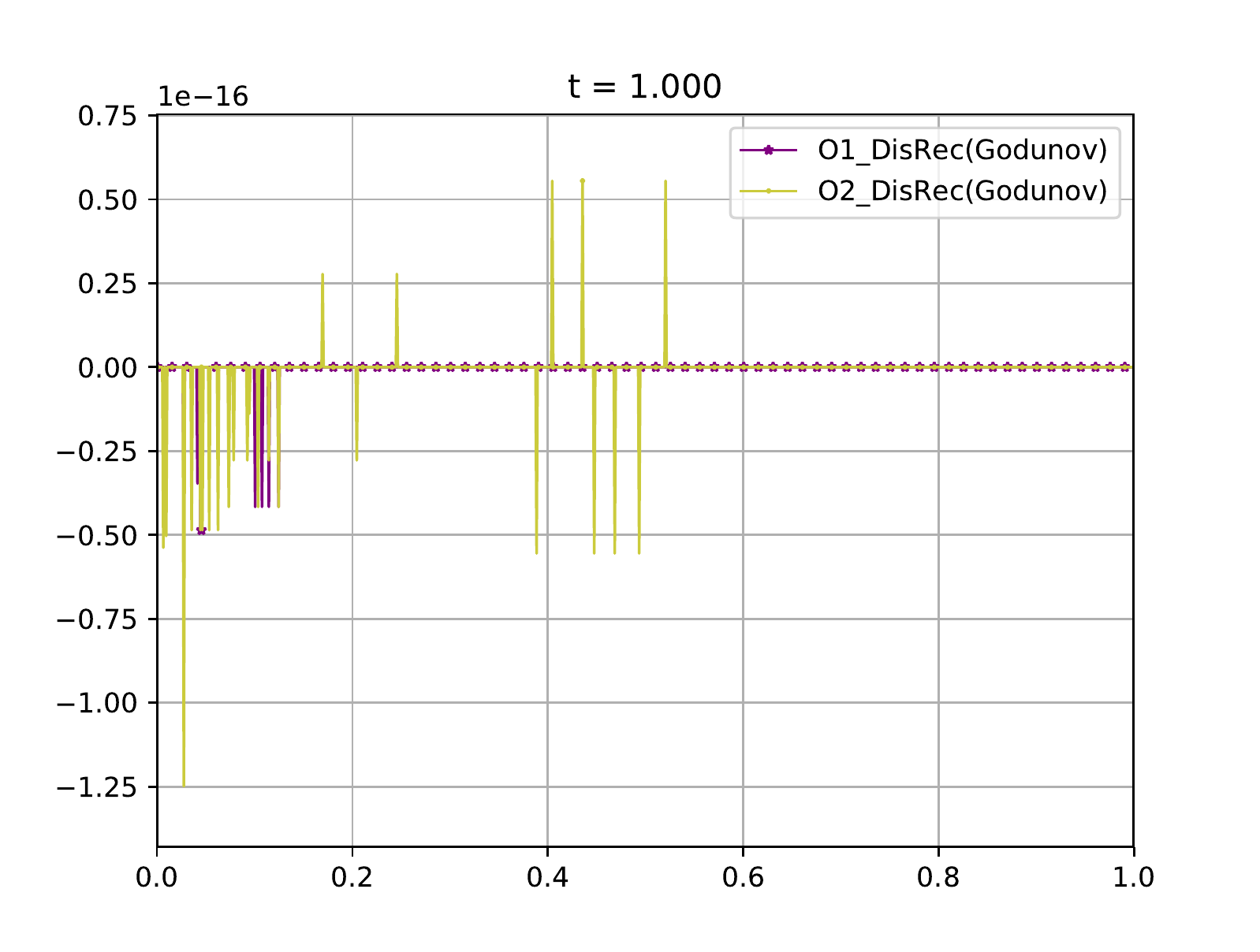}
		\end{subfigure}
		\begin{subfigure}{0.5\textwidth}
				\includegraphics[width=1.1\linewidth]{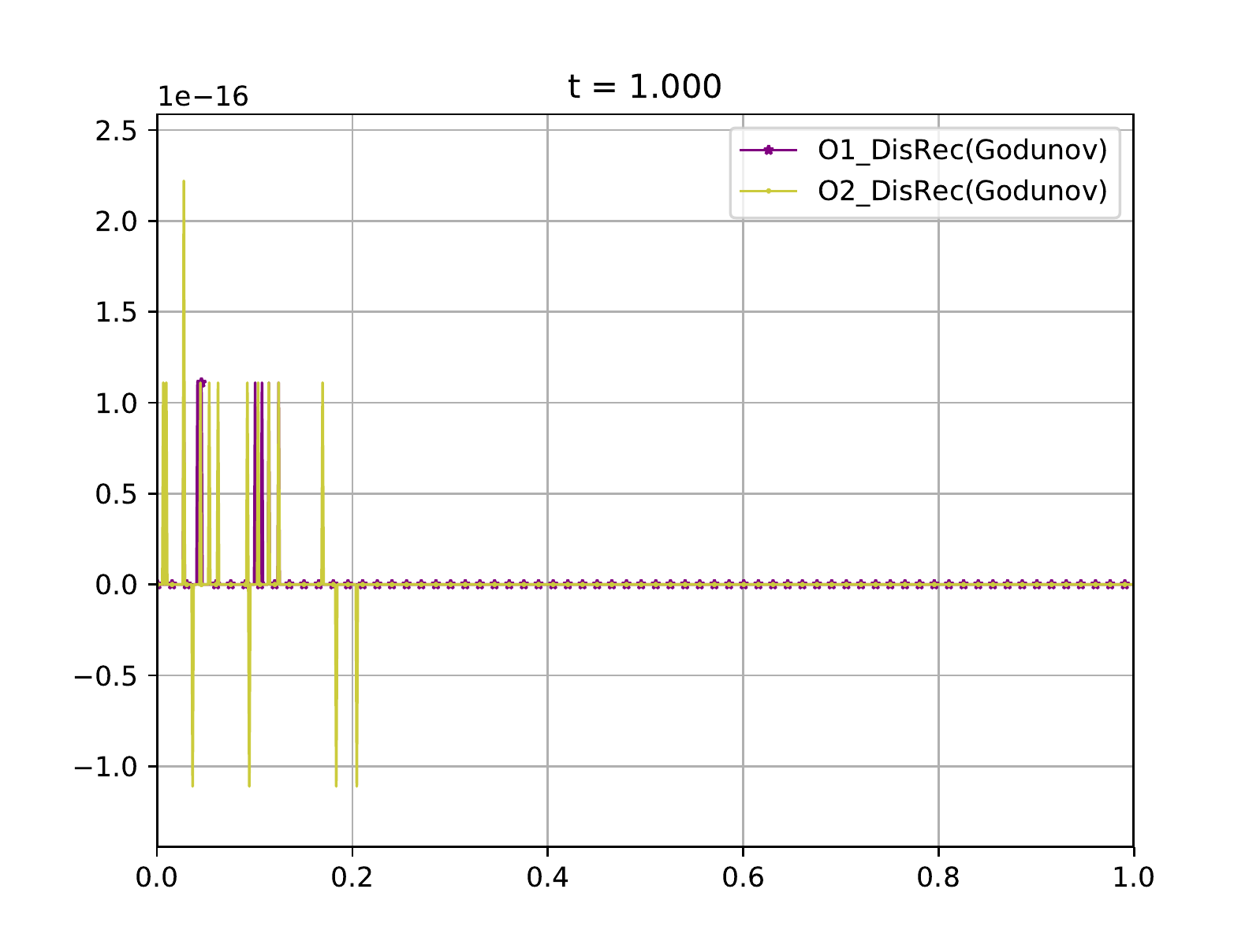}
		\end{subfigure}
		\caption{Coupled Burgers system. Test 5: difference between the numerical solution at $t=1.00$ and the stationary solution. Left: variable $u$. Right: variable $v$.}
		\label{fig:Burgers_TestWB_O1_O2_DisRec_Godunov_error_comparison}
	\end{figure}

\begin{table}[ht]
  	\centering
  	\begin{tabular}{|c|c|c|c|}
  		\hline 
  		$||\Delta u||_1$ (1st) & $||\Delta v||_1$ (1st) & $||\Delta u||_1$ (2nd) & $||\Delta v||_1$ (2nd) \\   
  		\hline 
  	    3.40e-19 & 8.88e-19 & 1.17e-18 & 1.78e-18 \\
  		\hline 
  	\end{tabular} 
	  	\caption{$L^{1}$ errors $||\Delta \cdot||_1$ at time $t=1$ for the Coupled Burgers model with initial conditions (\ref{TestWB}).}
  	\label{tab:Error_TestWB}
\end{table} 

\subsubsection*{Test 6: Perturbed stationary solution}

We consider finally the initial condition
\begin{equation}\label{TestPerturbedWB}
    \bu_{0}(x) = (u,v)_{0}(x) =(\sin(x)+0.2e^{-2000(r-0.5)^{2}},1-\sin(x)),
\end{equation}
that is the stationary solution \eqref{TestWB} with a perturbation in the variable $u$. Figures \ref{fig:Burgers_Test6_O1_vs_O2_DisRec_vs_noDisRec_Godunov_u} and \ref{fig:Burgers_Test6_O1_vs_O2_DisRec_vs_noDisRec_Godunov_v} show the numerical solutions obtained at time $t=0.2$ and $t=1$ using a 1000-cell mesh together with a reference solution obtained with the first order in-cell discontinuous reconstruction Godunov scheme using a 10000-cell mesh. As it can be seen the second order methods capture better the smooth parts of the solution and the ones with the in-cell reconstruction capture better the shock appearing in the perturbation. Observe that, in this case, the stationary solution \eqref{TestWB} is not restored: a different equilibrium with a stationary bump placed at the initial location of the perturbation is obtained once the waves generated by the perturbation leaves the computational domain.

\begin{figure}[h]
		\begin{subfigure}{0.5\textwidth}
			\includegraphics[width=1.1\linewidth]{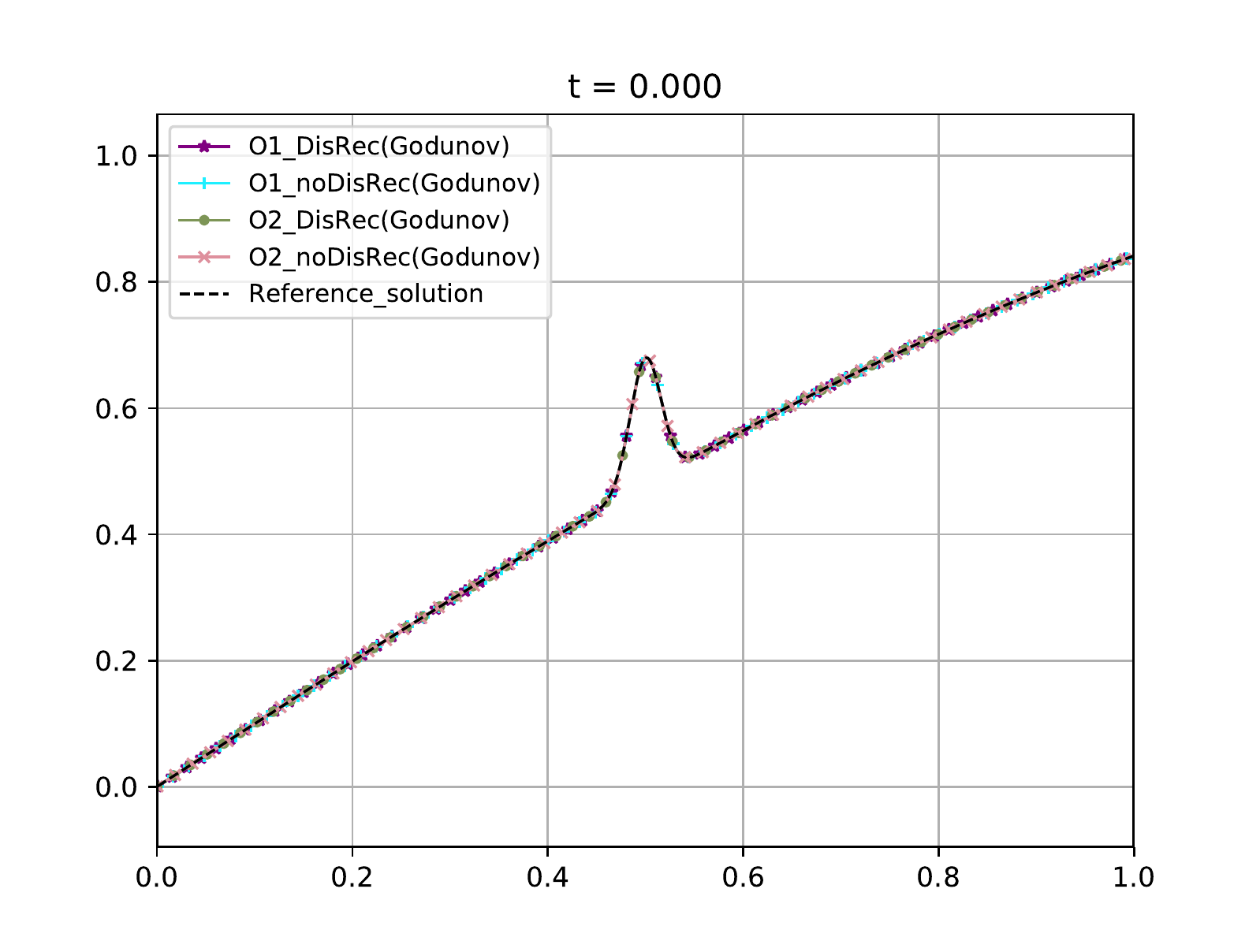}
		\end{subfigure}
		\begin{subfigure}{0.5\textwidth}
				\includegraphics[width=1.1\linewidth]{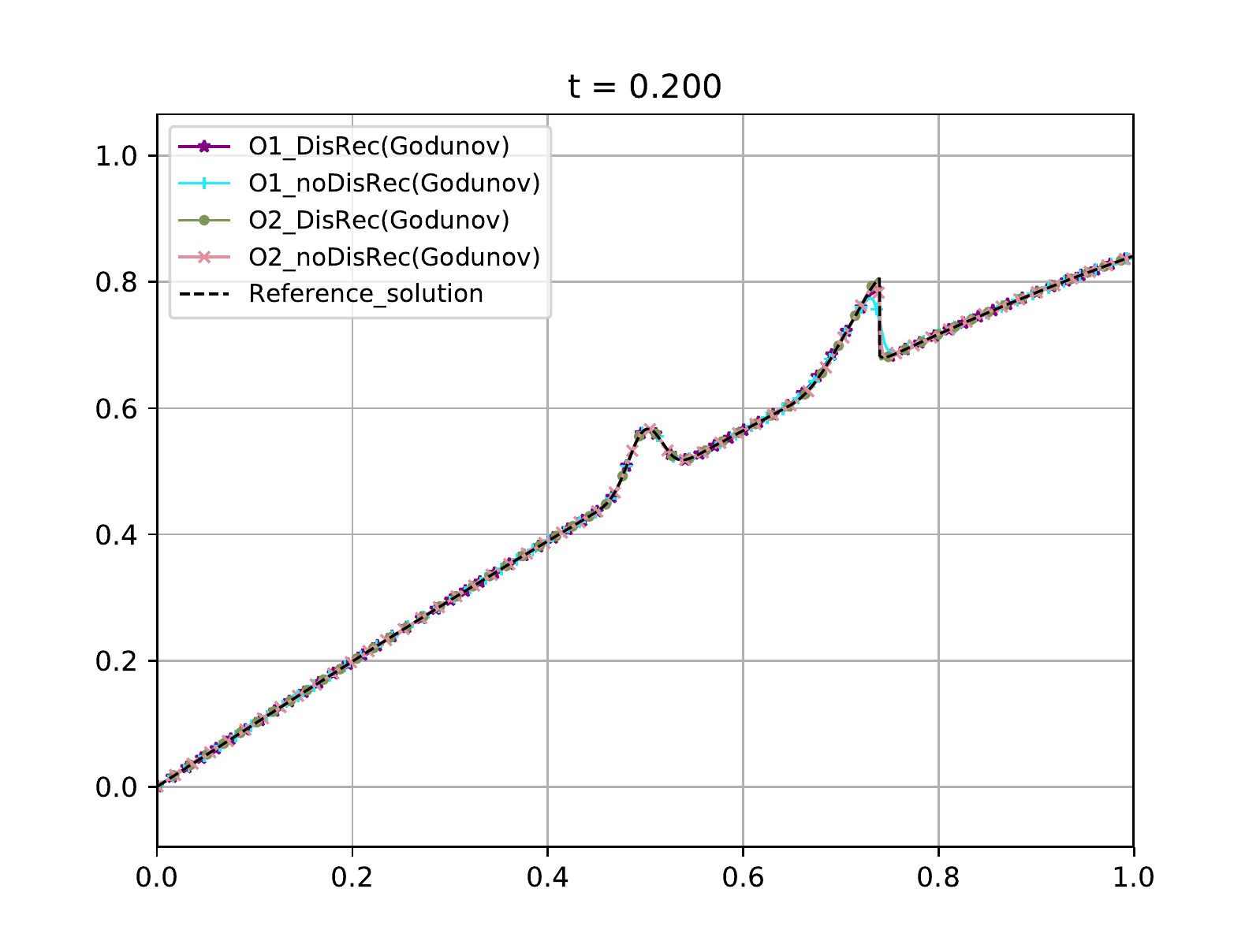}
		\end{subfigure}
		\newline
		\begin{subfigure}{0.5\textwidth}
			\includegraphics[width=1.1\linewidth]{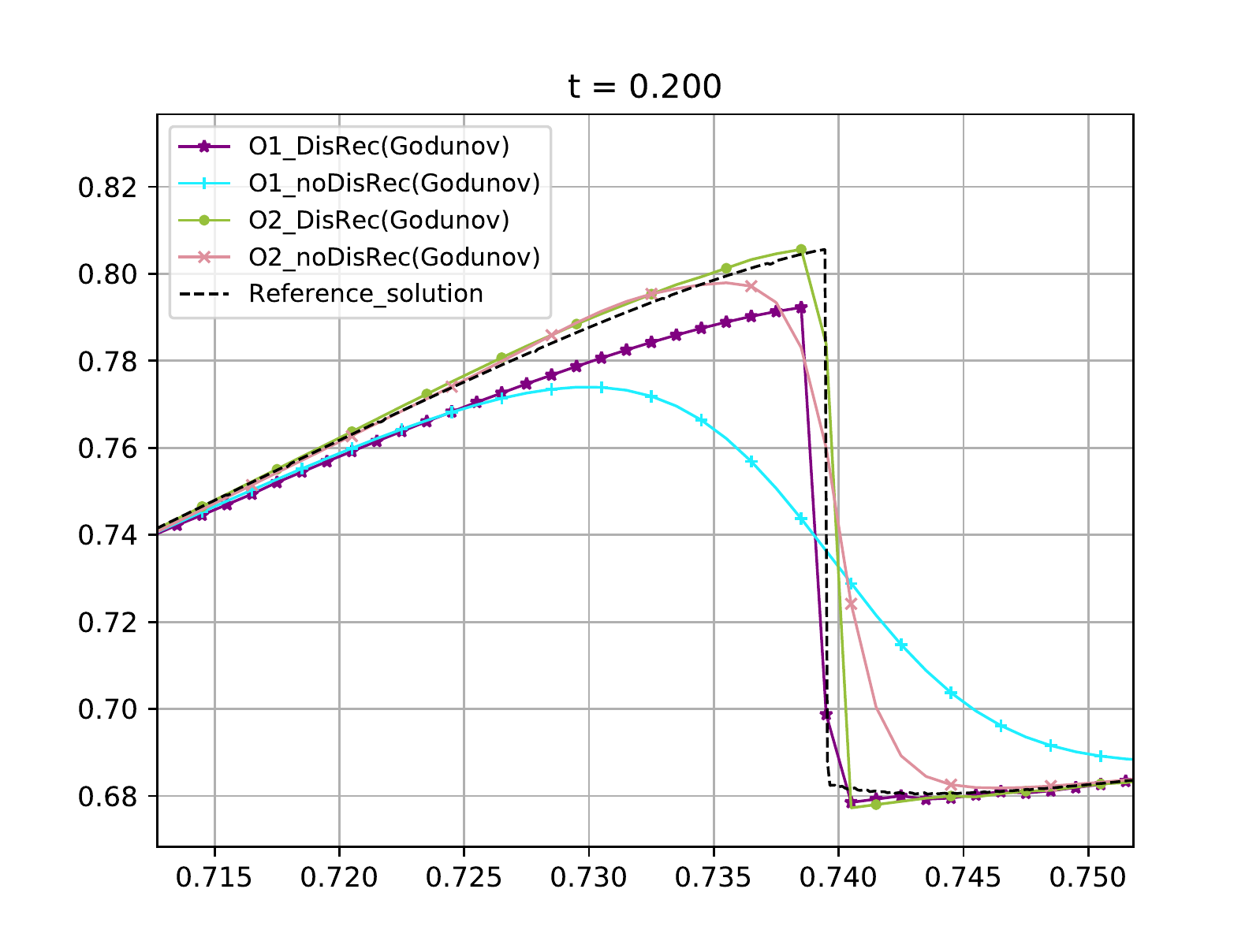}
			\caption{zoom}
		\end{subfigure}
		\begin{subfigure}{0.5\textwidth}
				\includegraphics[width=1.1\linewidth]{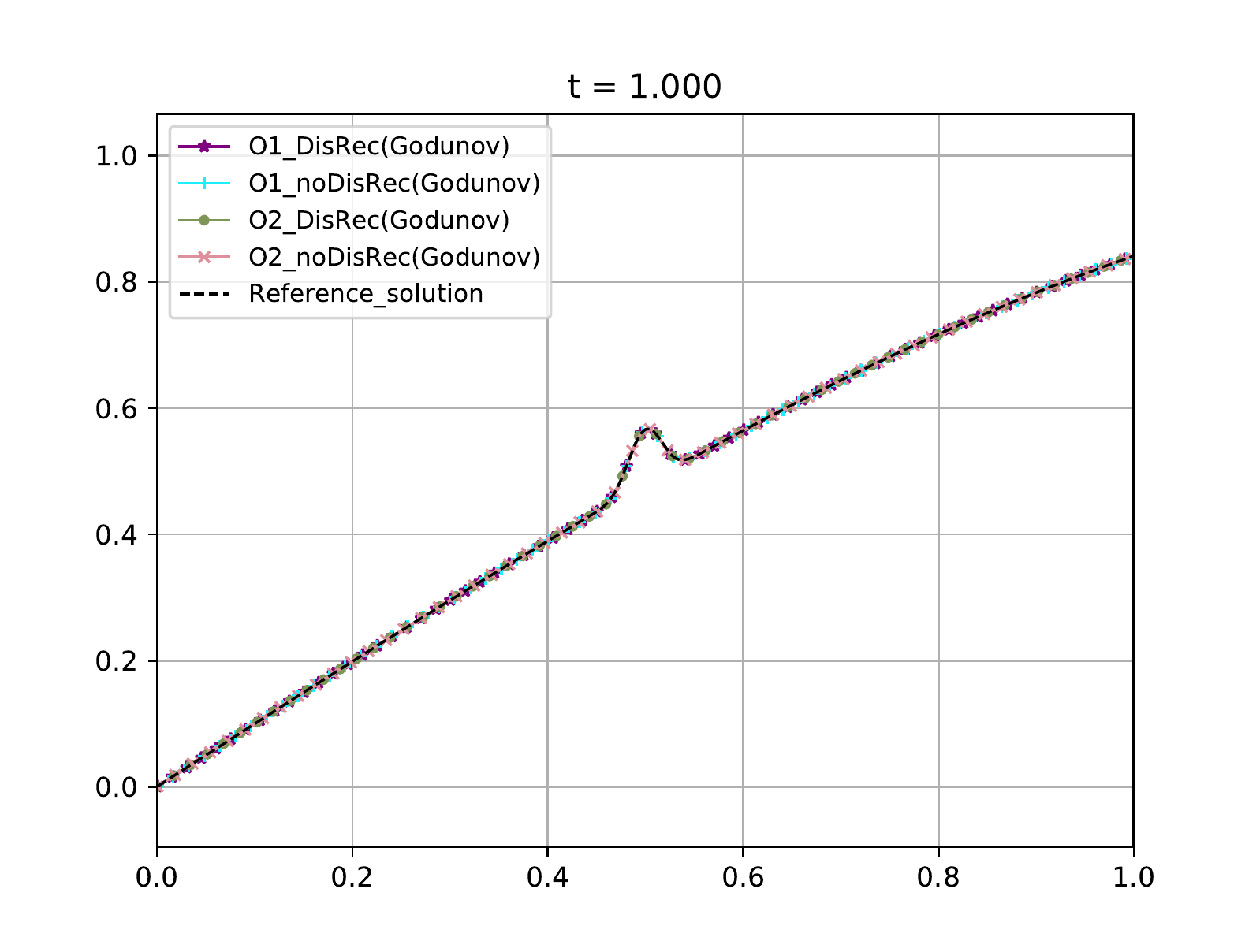}
		\end{subfigure}
		\caption{Coupled Burgers system. Test 6: variable $u$. Top: initial condition (left),  reference and numerical solutions obtained at time $t = 0.2$ with 1000 cells (right). Down: zoom of the perturbation area at time $t = 0.2$ (left), reference and numerical solutions obtained at time $t = 1$ (right).}
		\label{fig:Burgers_Test6_O1_vs_O2_DisRec_vs_noDisRec_Godunov_u}
	\end{figure}
	
\begin{figure}[h]
		\begin{subfigure}{0.5\textwidth}
			\includegraphics[width=1.1\linewidth]{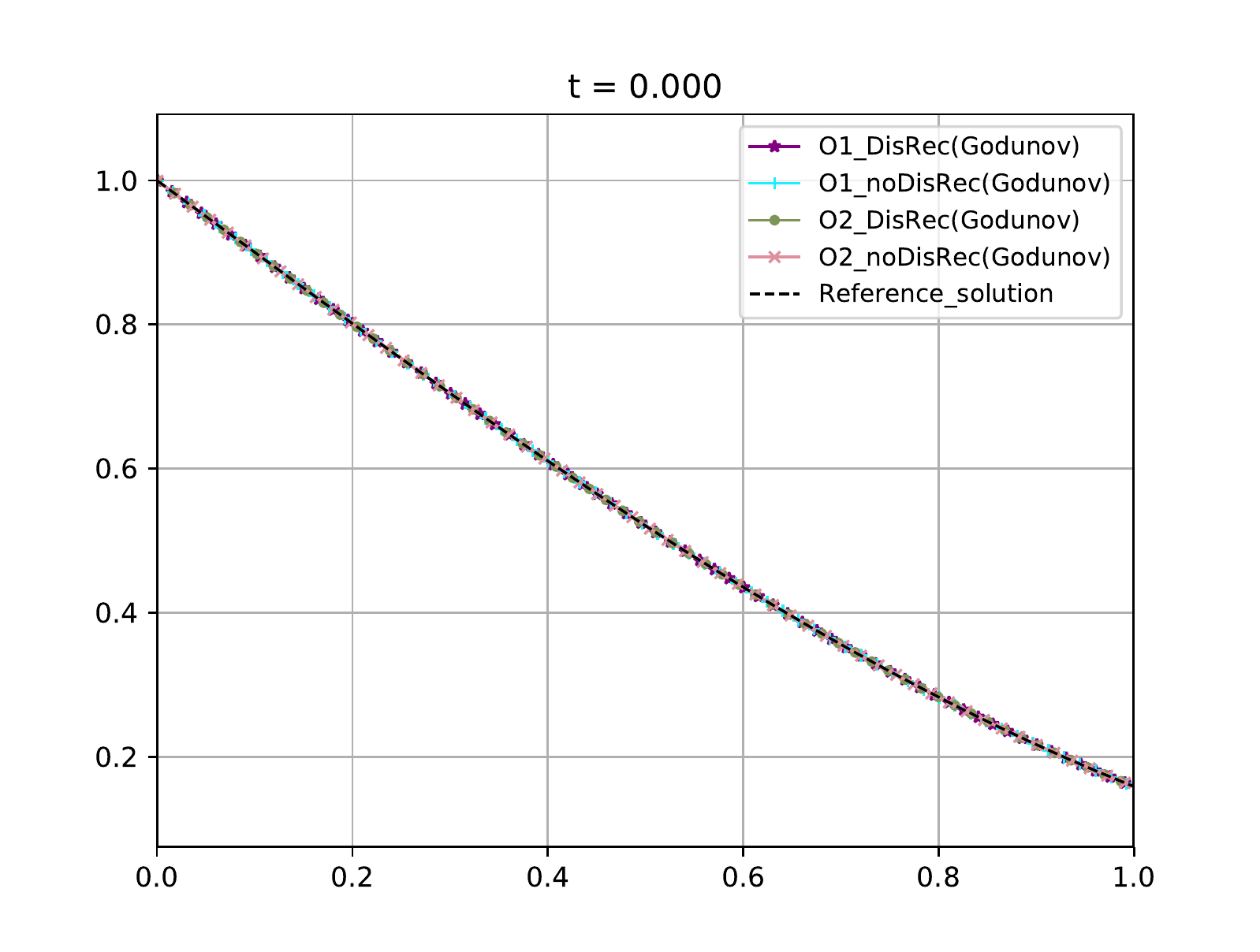}
		\end{subfigure}
		\begin{subfigure}{0.5\textwidth}
				\includegraphics[width=1.1\linewidth]{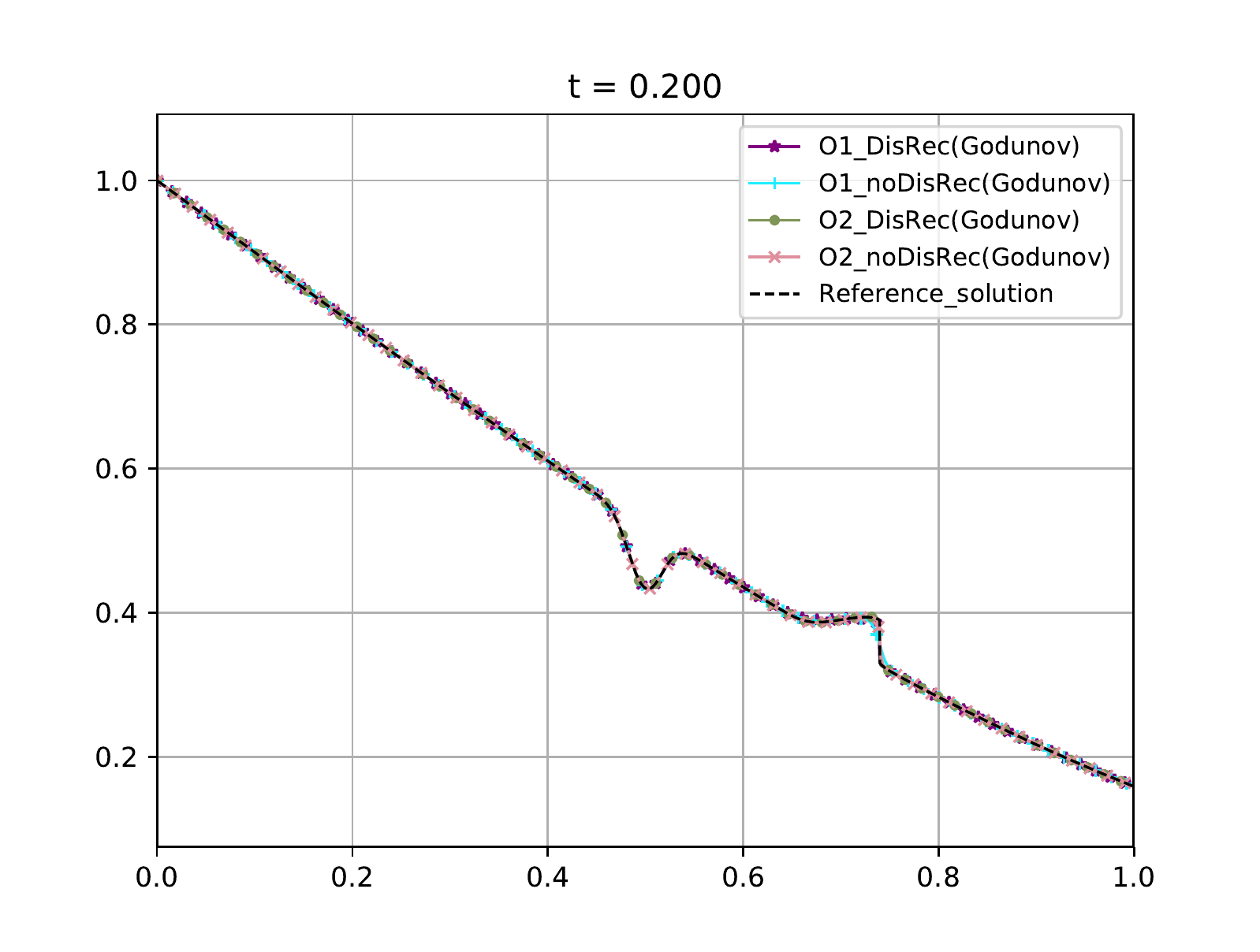}
		\end{subfigure}
		\newline
		\begin{subfigure}{0.5\textwidth}
			\includegraphics[width=1.1\linewidth]{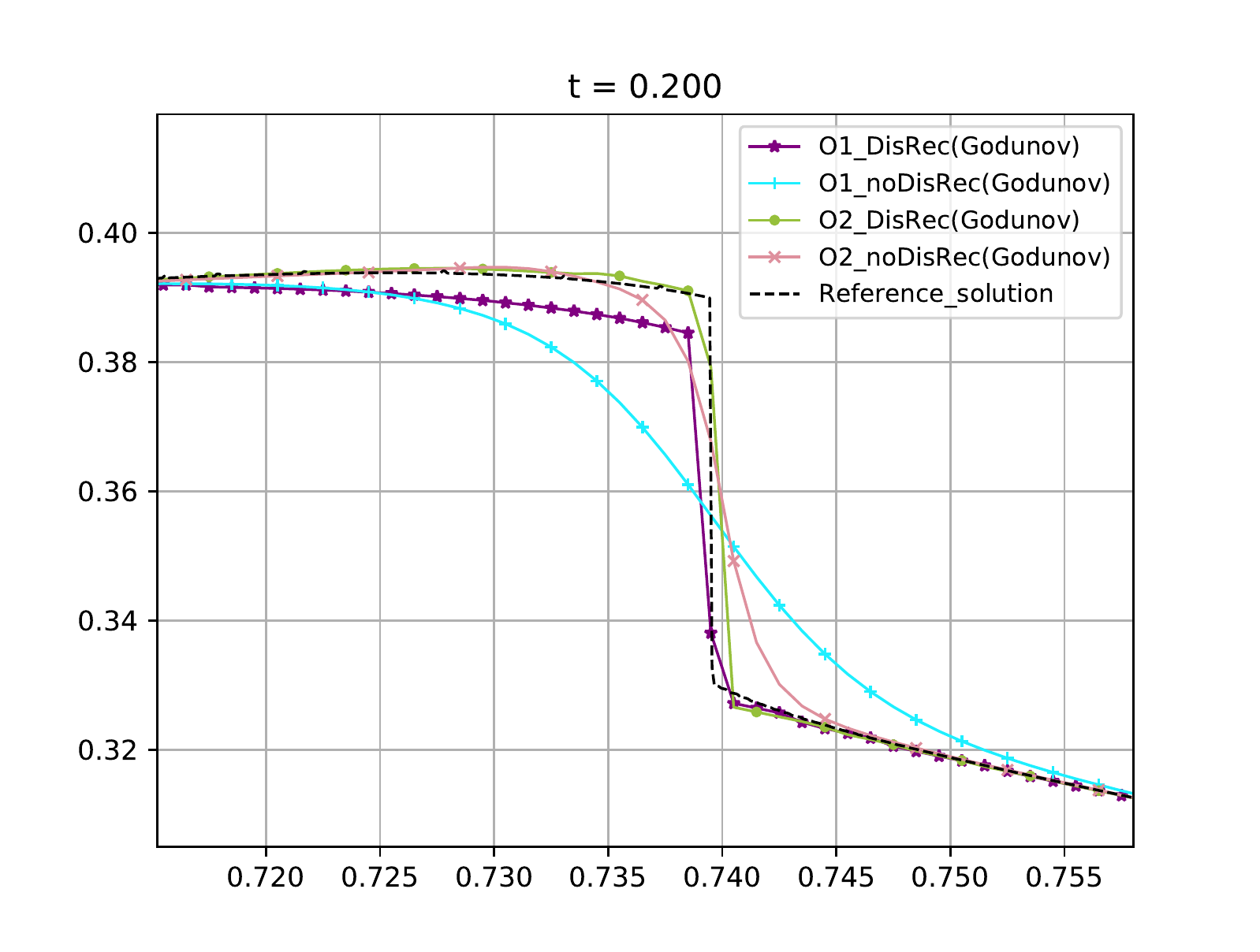}
			\caption{zoom}
		\end{subfigure}
		\begin{subfigure}{0.5\textwidth}
				\includegraphics[width=1.1\linewidth]{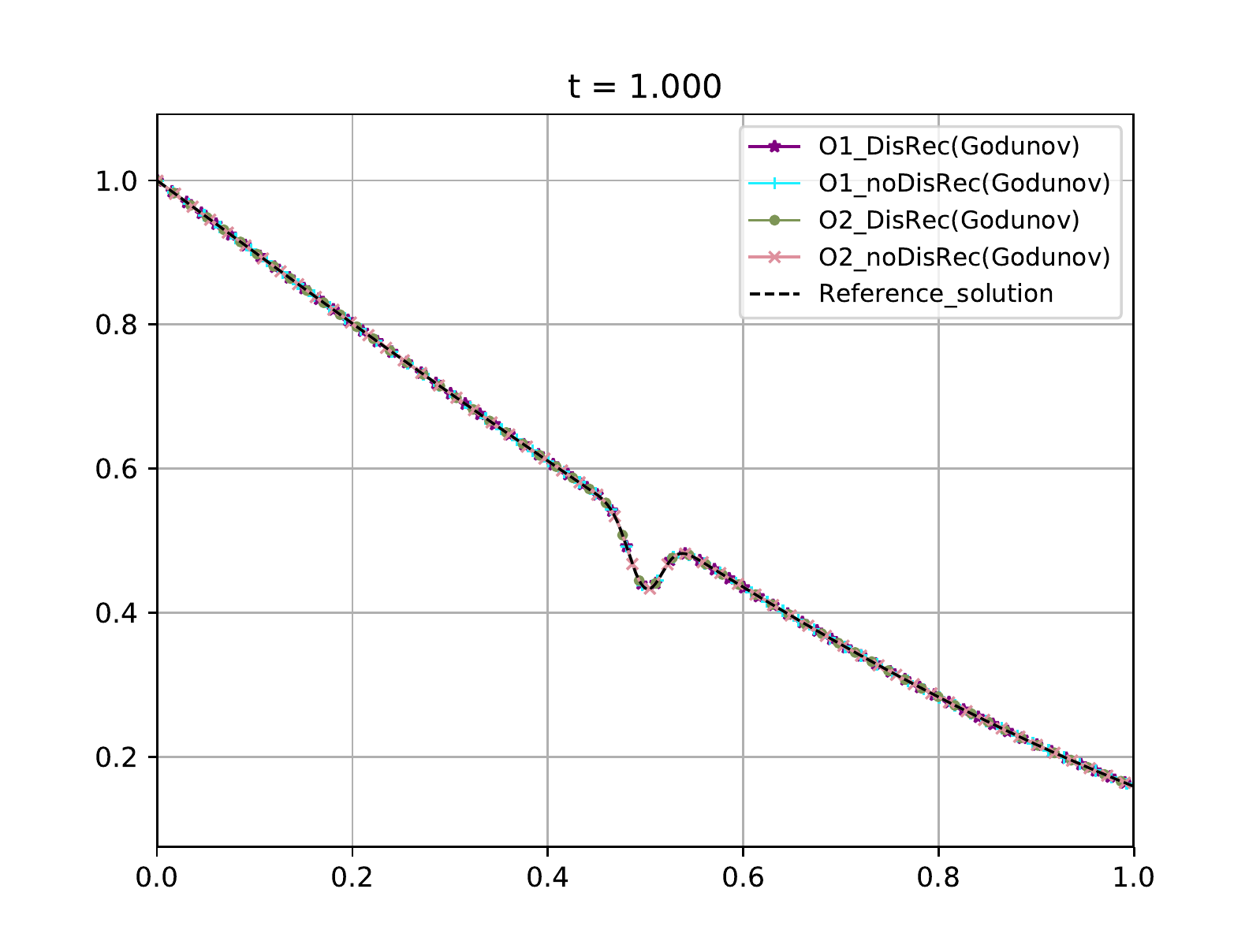}
		\end{subfigure}
		\caption{Coupled Burgers system. Test 6: variable $v$. Top: initial condition (left),  reference and numerical solutions obtained at time $t = 0.2$ with 1000 cells (right). Down: zoom of the perturbation area at time $t = 0.2$ (left), reference and numerical solutions obtained at time $t = 1$ (right).}
		\label{fig:Burgers_Test6_O1_vs_O2_DisRec_vs_noDisRec_Godunov_v}
	\end{figure}

\subsection{Gas dynamics equations in Lagrangian coordinates}
Let us consider the gas dynamics equations in Lagrangian coordinates: 
\begin{equation} \label{systeme}
\left\{
\begin{array}{l}
\partial_t \tau - \partial_x u = 0,\\
\partial_t u + \partial_x p = 0,\\
\partial_t E + \partial_x (pu) = 0,
\end{array}
\right.
\end{equation}
where $\tau >0$ represents the inverse of the density, $u$ is the velocity, $p=p(\tau,e) > 0$ is the pressure, $e$ is the internal energy,  and  $E=e+{u^2}/{2}$ the total energy. For the sake of simplicity, we consider a perfect gas equation of state $p (\tau,e) = (\gamma - 1) {e}/{\tau}$ 
where $\gamma > 1$. System (\ref{systeme}) can be rewritten in nonconservative form as follows
\begin{equation} \label{systemenon}
\left\{
\begin{array}{l}
\partial_t \tau - \partial_x u = 0,\\
\partial_t u + \partial_x p = 0,\\
\partial_t e + p\partial_x u = 0,
\end{array}
\right.
\end{equation}
that can be written in the form (\ref{sys:nonconservative}) with
$$\bu = \left(\begin{array}{c}
     \tau  \\
     u     \\
     e
\end{array}\right), \quad \mathcal{A}(\bu)=\left(\begin{array}{ccc}
    0 & -1 & 0\\
    \displaystyle -\frac{(\gamma -1)e}{\tau^{2}} & 0 & \displaystyle \frac{\gamma-1}{\tau} \\
    0 & \displaystyle \frac{(\gamma-1)e}{\tau} & 0
\end{array}\right).$$

The system is strictly hyperbolic with eigenvalues
$$
\lambda_1 (\bu) = -\sqrt{{\gamma p}/{\tau}}, \quad \lambda_2 (\bu) = 0, \quad \lambda_3 (\bu) = \sqrt{{\gamma p}/{\tau}},
$$
whose characteristic fields are given by the eigenvectors
$$
R_1(\bu) = [1 , \sqrt{{\gamma p}/{\tau}}, -p]^T, \quad R_2(\bu) = [1 , 0, p/(\gamma-1)], \quad R_3(\bu) = [1, -\sqrt{{\gamma p}/{\tau}}, -p]^T.
$$
$R_2(\bu)$ is linearly degenerate and $R_i(\bu)$, $i = 1,3$ are genuinely nonlinear: see \cite{godlewski1995numerical}. On the other hand, the admissible solutions of (\ref{systeme}) are selected by Lax entropy inequalities, which here are equivalent to:
\begin{equation}\label{gasentropy}
\sigma (\tau_+ - \tau_-) \geq 0,
\end{equation}
where $\tau_-$ and $\tau_+$ are the values of $\tau$ at both sides of the discontinuity and $\sigma$ its speed of propagation.

Once the family of paths has been chosen, the simple waves of this system are:
\begin{itemize}
    \item  Stationary contact discontinuities linking states $\bu_l$, $\bu_r$ such that
    $$ u_l = u_r.$$
    
    \item Rarefactions waves joining states $\bu_l$, $\bu_r$ that satisfy 
    $$ u_l< u_r,$$
    and the relations given by the Riemann invariants:
    \begin{itemize}
        \item 1-rarefactions: 
        $$ 2\sqrt{\frac{\gamma e_l}{\gamma - 1}}+u_l = 2\sqrt{\frac{\gamma e_r}{\gamma - 1}}+u_r, \quad  \frac{e_l}{\tau_l^{\gamma-1}} = \frac{e_r}{\tau_r^{\gamma-1}}.$$
        \item 2-rarefactions: 
        $$ 2\sqrt{\frac{\gamma e_l}{\gamma - 1}}-u_l = 2\sqrt{\frac{\gamma e_r}{\gamma - 1}}-u_r, \quad \frac{e_l}{\tau_l^{\gamma-1}} = \frac{e_r}{\tau_r^{\gamma-1}}.
        $$
    \end{itemize}
    
    \item Shock waves joining states $\bu_l$ and $\bu_r$  that satisfy
    $$ u_l  > u_r $$
and the jump conditions:
    \begin{eqnarray*}
    \sigma[\tau] & = & -\left[u \right],\\
    \sigma[u] & = & \left[p \right], \\
    \sigma[e] & = & \int_0^1 \phi_p(s; \bu_l, \bu_r) 
    \partial_s \phi_u(s; \bu_l, \bu_r) \,ds.
    \end{eqnarray*}
    
\end{itemize}

If, for instance, the family of straight segments is chosen for the variables $\tau,u,p$
$$
\phi_\tau(s; \bu_l, \bu_r) = \tau_l + s(\tau_r - \tau_l); \quad  \phi_u(s; \bu_l, \bu_r) = u_l + s(u_r - u_l); \quad \phi_p(s; \bu_l, \bu_r) = p_l + s(p_r - p_l),
$$
the jump conditions reduce to:
    \begin{eqnarray*}
    \sigma[\tau] & = & (u_l-u_r),\\
    \sigma[u] & = & p_r-p_l, \\
    \sigma[e] & = & \frac{1}{2}(p_r+p_l)(u_r-u_l).
    \end{eqnarray*}
It can be easily checked that these jump conditions are equivalent to the standard Rankine-Hugoniot conditions corresponding to the conservative formulation (\ref{systeme}) and thus, the weak solutions are the same.

A Roe matrix is given in this case by:
$$
\mathcal{A}(\bu_l, \bu_r) =\mathcal{A}(\bar{\bu}), \quad \bar{\bu}(\bu_l, \bu_r) = (\bar{\tau},\bar{u}, \bar{p}),
$$
with 
$$\bar{\tau} = \frac{\tau_l + \tau_r}{2}, \quad \bar{u} = \frac{u_l + u_r}{2}, \quad \bar{e} = \frac{\bar{p} \bar{\tau}}{\gamma - 1}, \quad \bar{p} = \frac{p_l + p_r}{2},$$
see \cite{munz1994godunov}.\\

In
\cite{chalons2019path} the in-cell discontinuous reconstruction technique has been used to correct the results that are obtained with the standard Roe path-conservative scheme. To apply this technique, a cell is marked if
$$
u^n_{j-1} \geq u^n_{j+1}.
$$
The second strategy to select the speed, and the left and right states of the discontinuous reconstruction based on the Roe matrix is used here (see
Subsection \ref{ss:strategy}). More precisely:
\begin{itemize}
    \item If $u^n_{j-1}= u^n_{j+1}$ then 
    $$
    \sigma_j^n = 0, \quad \bu^n_{j,l} = \bu^n_{j-1}, \quad \bu^n_{j,r} = \bu^n_{j+1}.
    $$
    
    \item If $u^n_{j-1} > u^n_{j+1}$ and $\tau_{j+1}^{n}-\tau_{j-1}^{n}<0$ then
$$
\sigma_j^n =-\sqrt{\gamma\bar{p}/\bar{\tau}}, \quad  \bu^n_{j,l} = \bu^n_{j-1},  \quad \bu^n_{j,r} = \bu^n_{j-1} + \alpha_{1}R_1(\bu^n_{j-1},  \bu^n_{j+1}).
$$

\item If $u^n_{j-1} > u^n_{j+1}$ and $\tau_{j+1}^{n}-\tau_{j-1}^{n}>0$ then
$$
\sigma_j^n =\sqrt{\gamma\bar{p}/\bar{\tau}}, \quad \bu^n_{j,l} =  \bu^n_{j+1} - \alpha_{3}R_3(\bu^n_{j-1},  \bu^n_{j+1}), \quad \bu^n_{j,r} = \bu_{j+1}.
$$    
\end{itemize}
Here $\bar{p}$ and $\bar{\tau}$ represent the Roe intermediate values of $p$ and $\tau$, and  $\alpha_{k}$, $k=1, 2, 3$ the coordinates of $\bu^n_{j+1}-\bu^n_{j-1}$ in the basis of eigenvectors of the Roe matrix, i.e. $ \bu^n_{j+1}-\bu^n_{j-1} = \sum_{k=1}^{3} \alpha_{k}R_{k}(\bu^n_{j-1}, \bu^n_{j+1})$.  
This method is extended here to second order by following the procedure described in Section \ref{sec:methods}.

\subsubsection*{Test 1: Isolated 1-shock}

Let us consider the following initial condition taken from \cite{chalons2019path}

$$(\tau, u, p)_{0}(x)= \begin{cases}
     (2.09836065573770281, 2.3046638387921279, 1) & \text{if $x<0.5$,}  \\
     (8,0,0.1) & \text{otherwise.}
\end{cases} $$
The solution of the Riemann problem consists of a 1-shock wave joining the left and right states.
Figures \ref{fig:Gas_Test1_O1_vs_O2_DisRec_vs_noDisRec_Roe_tau},  \ref{fig:Gas_Test1_O1_vs_O2_DisRec_vs_noDisRec_Roe_u}, and \ref{fig:Gas_Test1_O1_vs_O2_DisRec_vs_noDisRec_Roe_e} compare the exact solution with the numerical approximations at time $t = 0.5$ obtained with Roe method, its second order extension based on the standard MUSCL reconstruction, and the first and second order discontinuous in-cell reconstruction schemes using 300-cell mesh and CFL = 0.5: as it can be seen Roe methods does not capture the discontinuities properly what is not the case for the two other methods. 

\begin{figure}[h]
		\begin{subfigure}{0.5\textwidth}
			\includegraphics[width=1.1\linewidth]{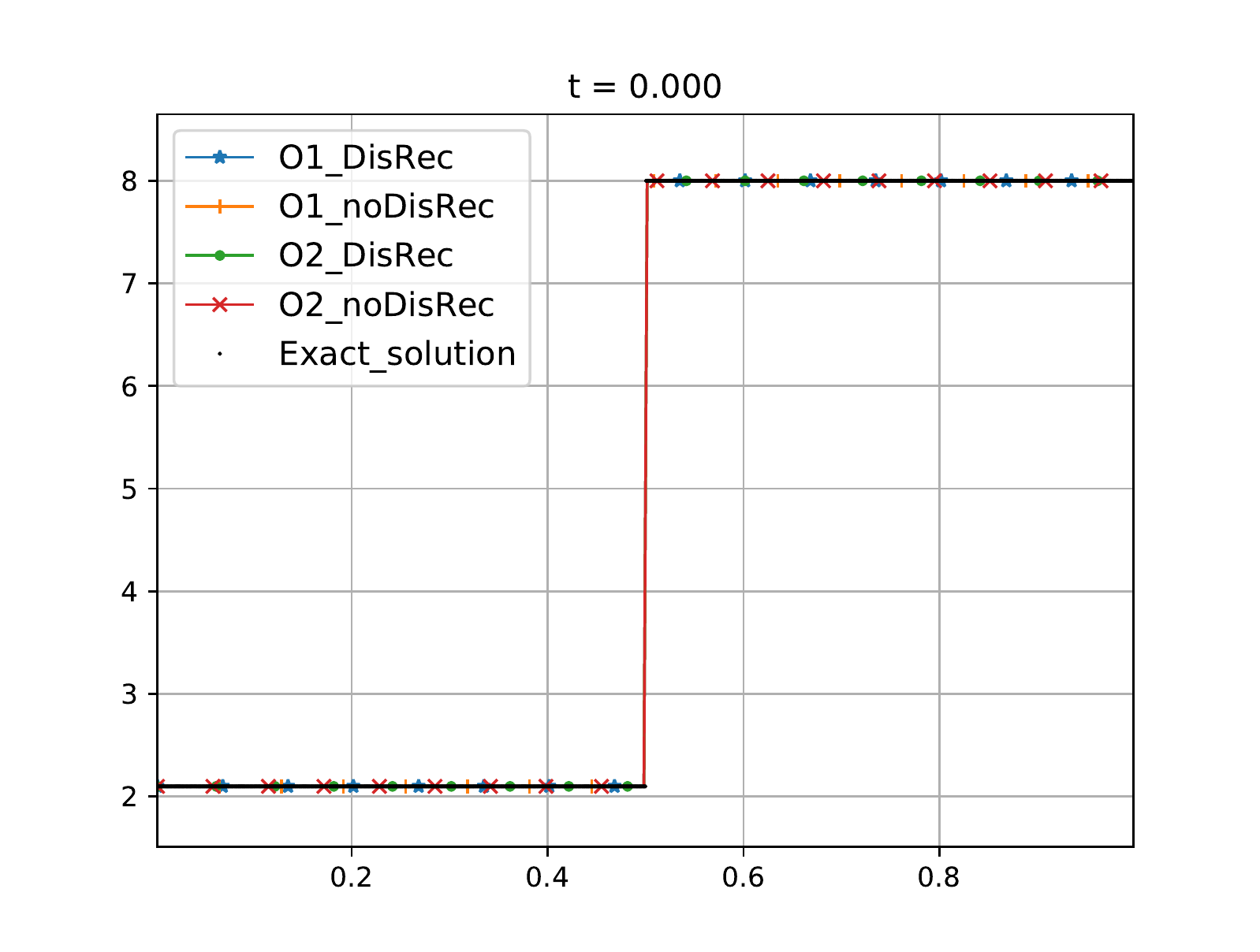}
		\end{subfigure}
		\begin{subfigure}{0.5\textwidth}
				\includegraphics[width=1.1\linewidth]{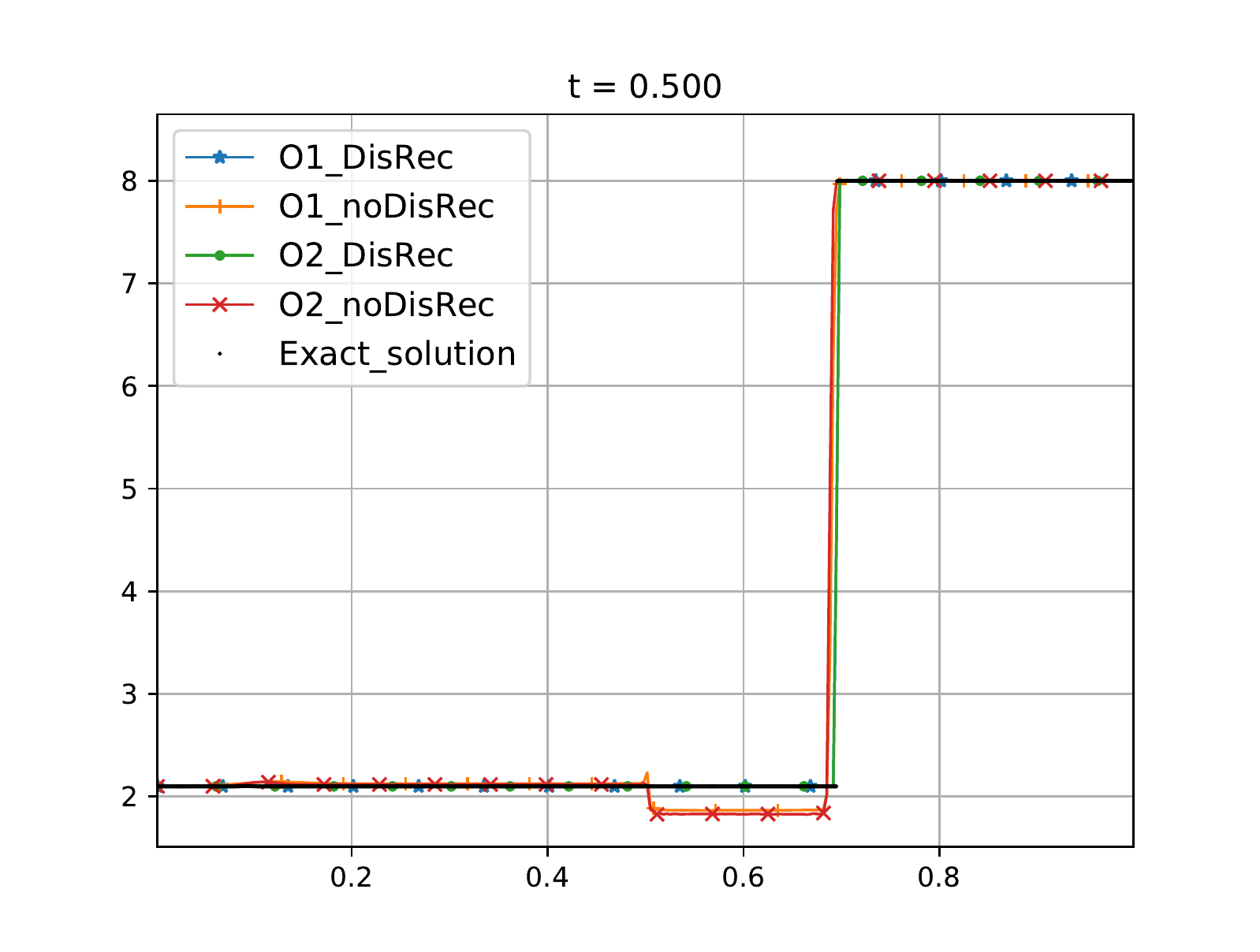}
		\end{subfigure}
		\caption{Gas dynamics equations in Lagrangian coordinates. Test 1: variable $\tau$. Left: initial condition. Right: exact solution and numerical solutions obtained at time $t = 0.5$ with 300 cells.}
		\label{fig:Gas_Test1_O1_vs_O2_DisRec_vs_noDisRec_Roe_tau}
	\end{figure}
	
	\begin{figure}[h]
		\begin{subfigure}{0.5\textwidth}
			\includegraphics[width=1.1\linewidth]{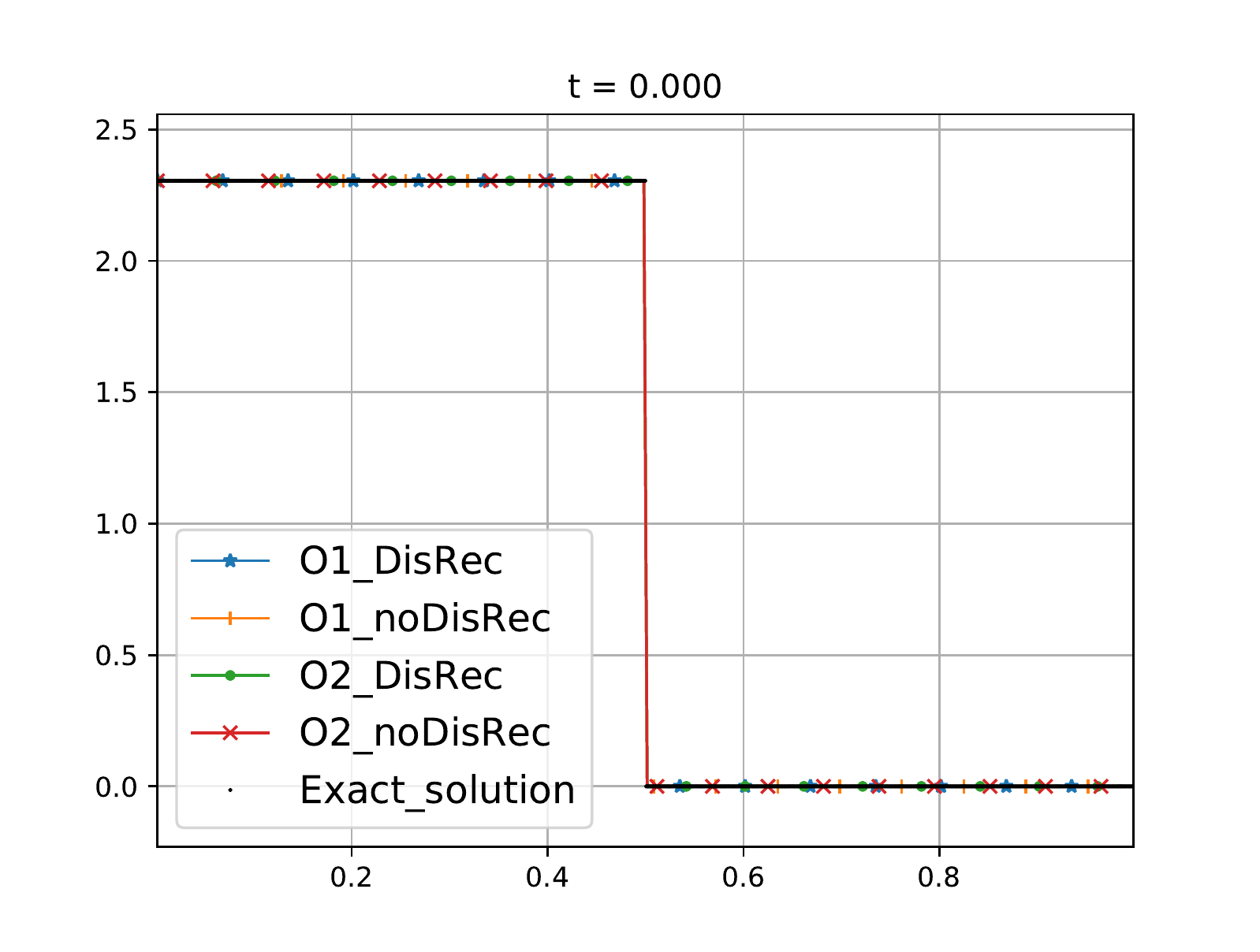}
		\end{subfigure}
		\begin{subfigure}{0.5\textwidth}
				\includegraphics[width=1.1\linewidth]{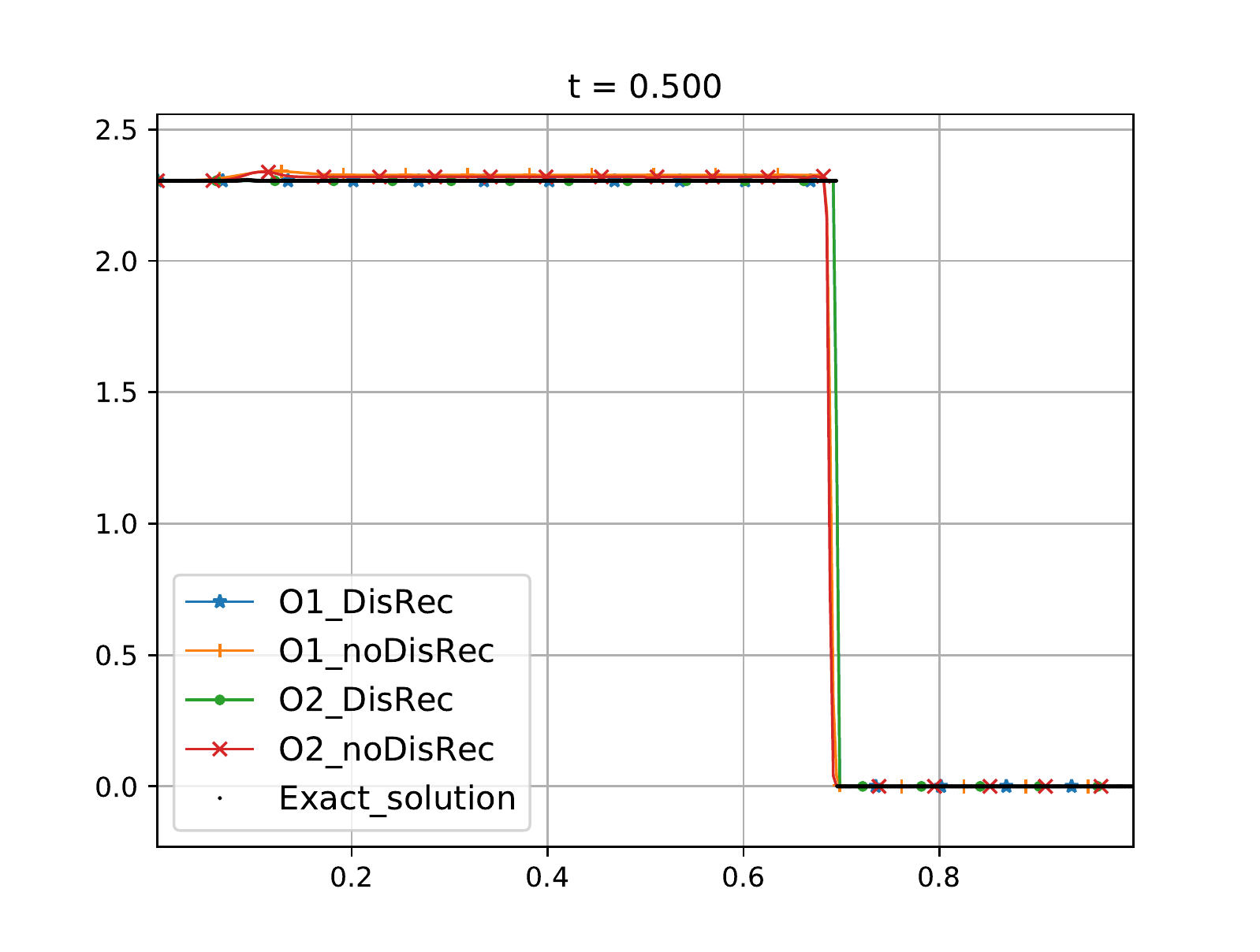}
		\end{subfigure}
		\caption{Gas dynamics equations in Lagrangian coordinates. Test 1: variable $u$. Left: initial condition. Right: exact solution and numerical solutions obtained at time $t = 0.5$ with 300 cells.}
		\label{fig:Gas_Test1_O1_vs_O2_DisRec_vs_noDisRec_Roe_u}
	\end{figure}
	
	\begin{figure}[h]
		\begin{subfigure}{0.5\textwidth}
			\includegraphics[width=1.1\linewidth]{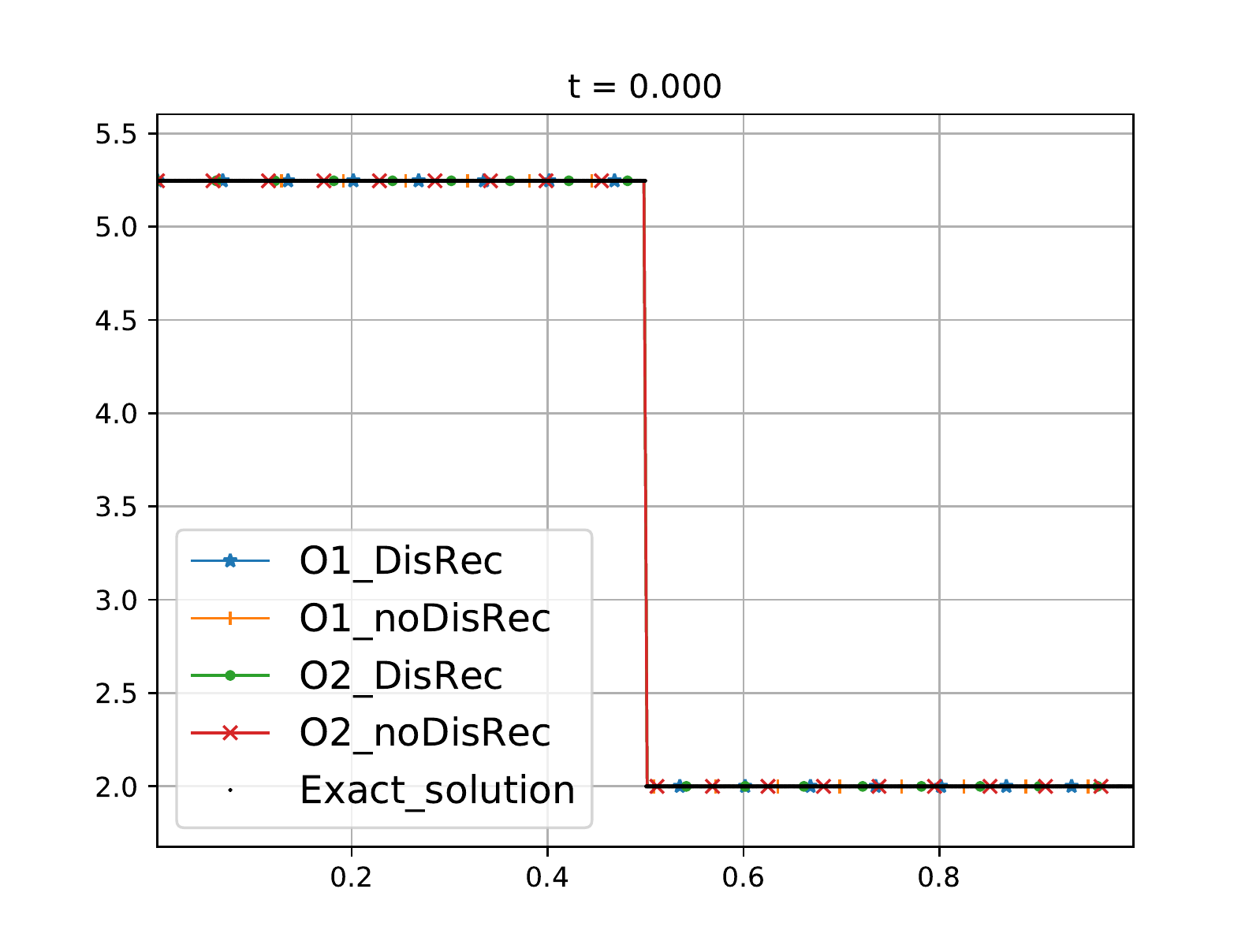}
		\end{subfigure}
		\begin{subfigure}{0.5\textwidth}
				\includegraphics[width=1.1\linewidth]{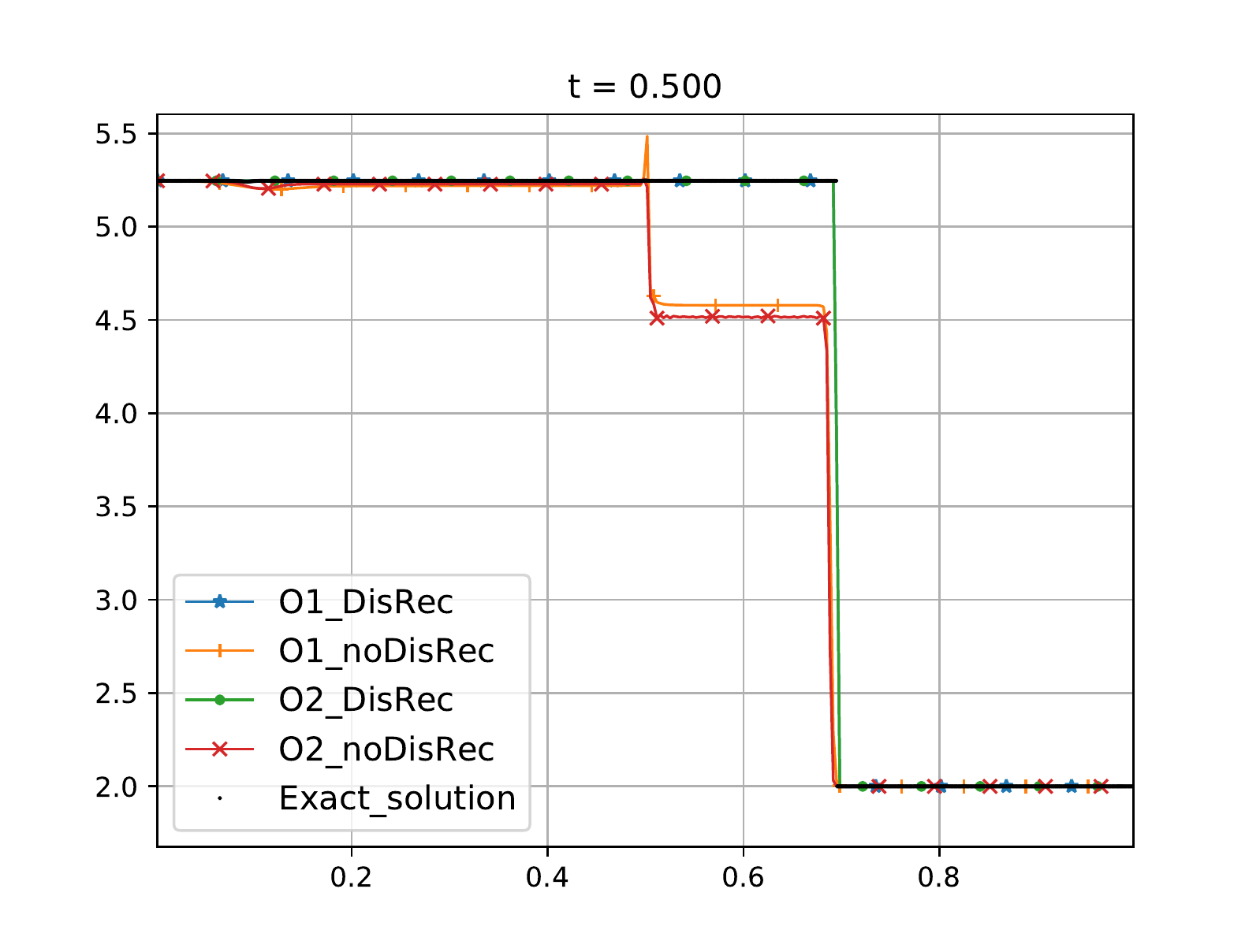}
		\end{subfigure}
		\caption{Gas dynamics equations in Lagrangian coordinates. Test 1: variable $e$. Left: initial condition. Right: exact solution and numerical solutions obtained at time $t = 0.5$ with 300 cells.}
		\label{fig:Gas_Test1_O1_vs_O2_DisRec_vs_noDisRec_Roe_e}
	\end{figure}
	
\subsubsection*{Test 2: 1-shock + contact discontinuity + 3-shock}

Let us consider the following initial condition taken from \cite{chalons2019path}
$$(\tau, u, p)_{0}(x)= \begin{cases}
     (5, 3.323013993227, 0.481481481481) & \text{if $x<0.5$,}  \\
     (8,0,0.1) & \text{otherwise.}
\end{cases}$$

The solution of the Riemann problem consists of a 1-shock wave with negative speed, a stationary contact discontinuity, and a 3-shock that coincides with the one in the first test problem.
Figures \ref{fig:Gas_Test2_O1_vs_O2_DisRec_vs_noDisRec_Roe_tau},  \ref{fig:Gas_Test2_O1_vs_O2_DisRec_vs_noDisRec_Roe_u}, and \ref{fig:Gas_Test2_O1_vs_O2_DisRec_vs_noDisRec_Roe_e} show the numerical solutions at time
$t = 0.5$ using a mesh of 300 cells and CFL = 0.5 and the conclusions are the same: the in-cell discontinuous reconstruction methods of order 1 and 2 get the exact solution while Roe method and its second-order extension based on the standard MUSCL reconstruction do not.

\begin{figure}[h]
		\begin{subfigure}{0.5\textwidth}
			\includegraphics[width=1.1\linewidth]{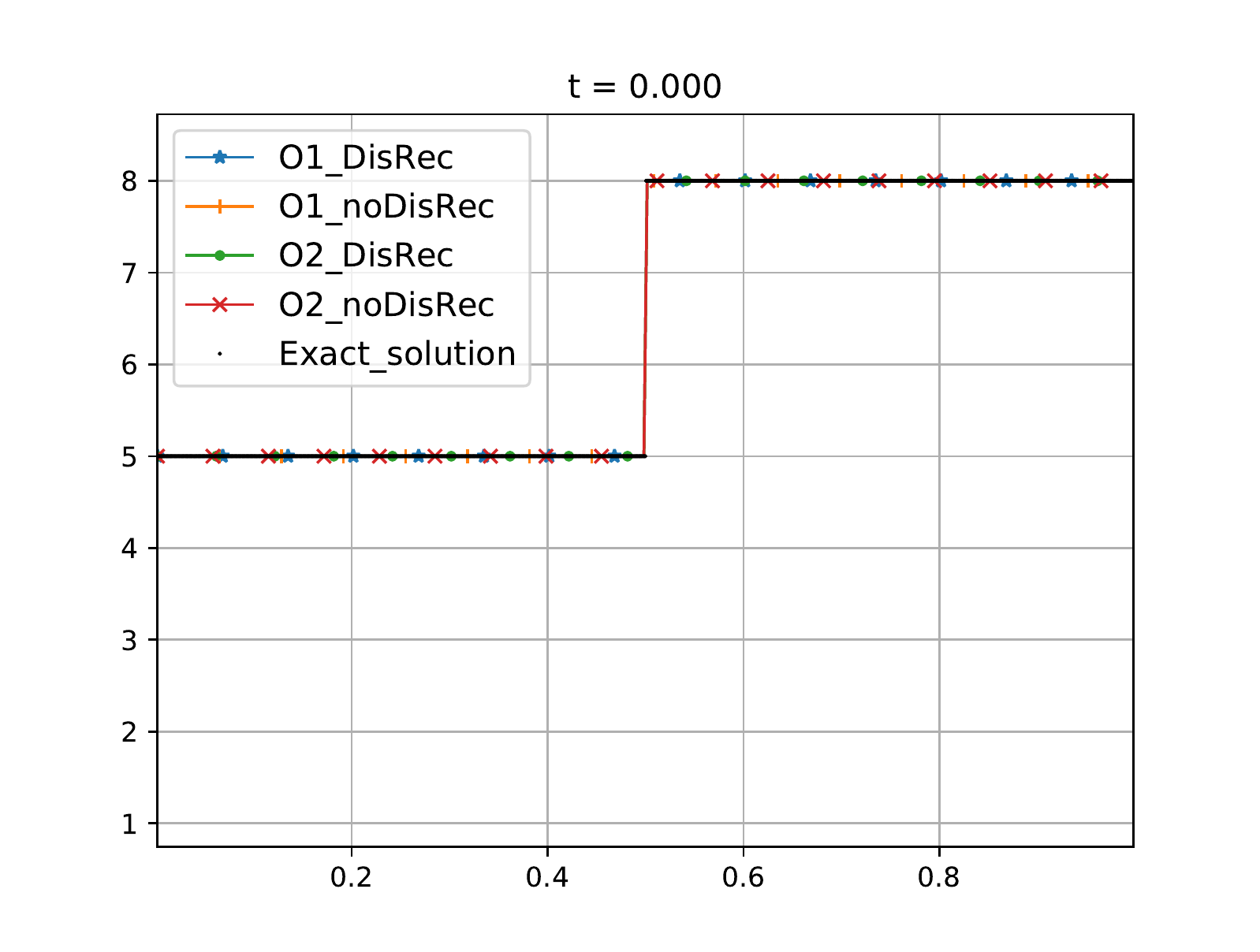}
		\end{subfigure}
		\begin{subfigure}{0.5\textwidth}
				\includegraphics[width=1.1\linewidth]{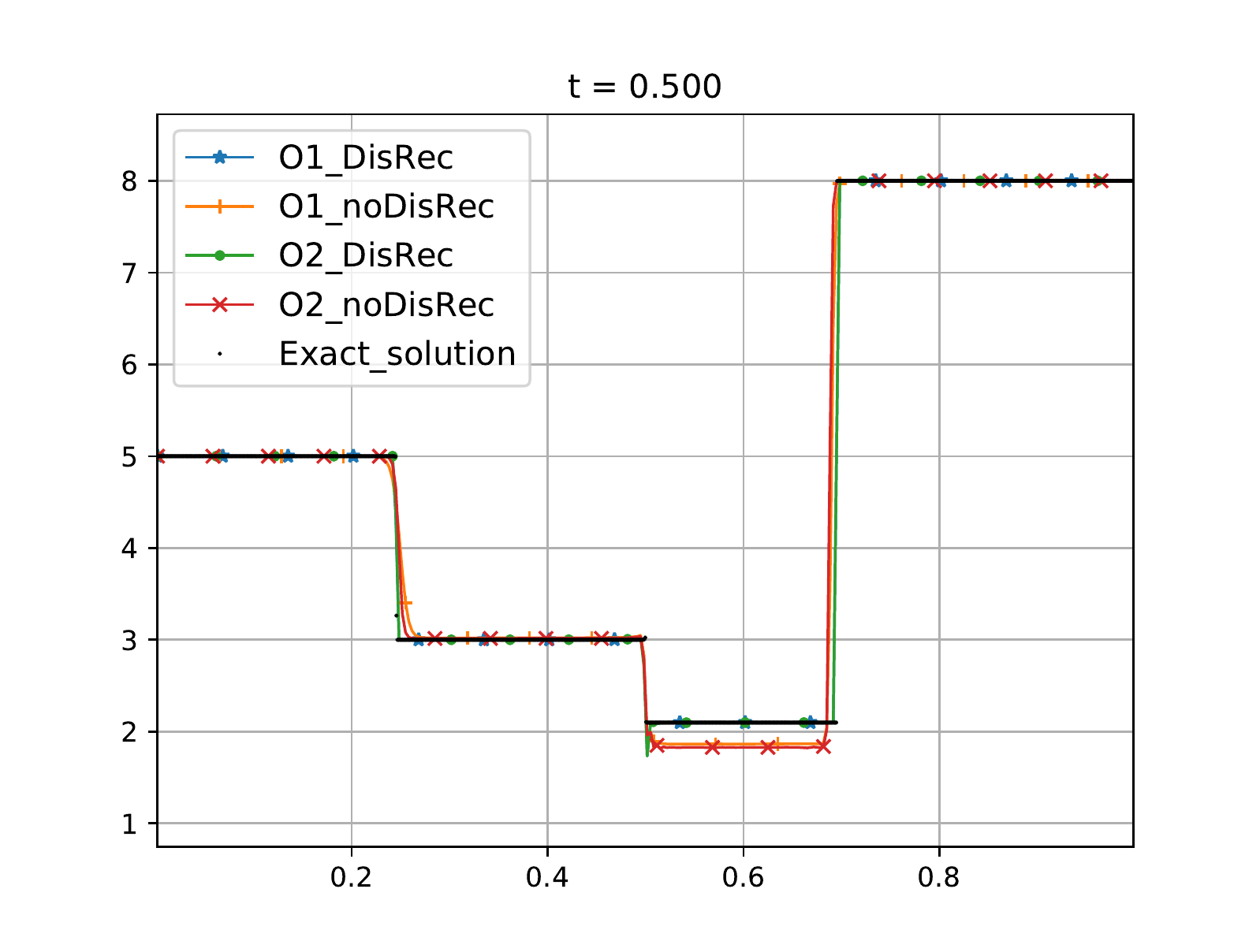}
		\end{subfigure}
		\caption{Gas dynamics equations in Lagrangian coordinates. Test 2: variable $\tau$. Left: initial condition. Right: exact solution and numerical solutions obtained at time $t = 0.5$ with 300 cells.}
		\label{fig:Gas_Test2_O1_vs_O2_DisRec_vs_noDisRec_Roe_tau}
	\end{figure}
	
	\begin{figure}[h]
		\begin{subfigure}{0.5\textwidth}
			\includegraphics[width=1.1\linewidth]{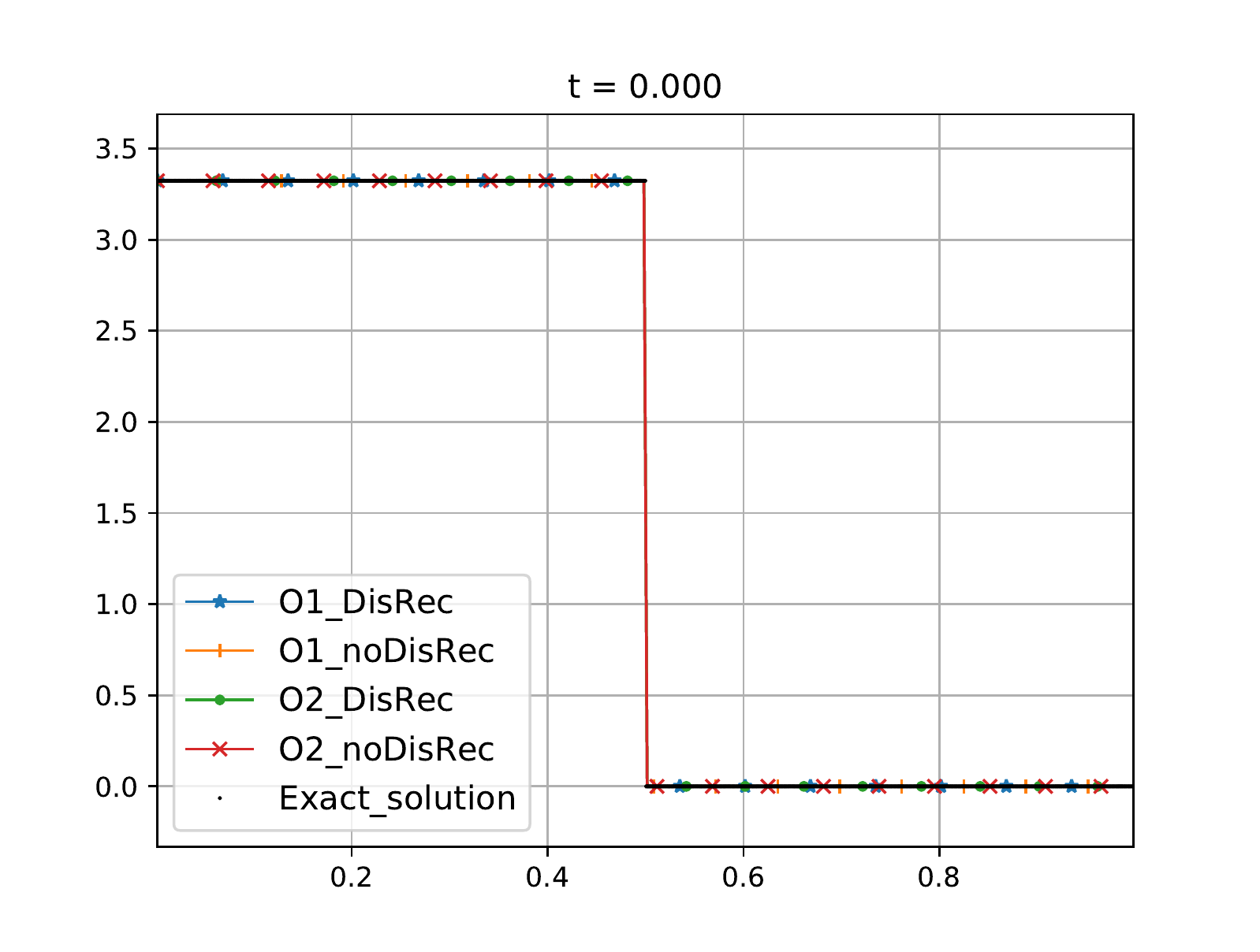}
		\end{subfigure}
		\begin{subfigure}{0.5\textwidth}
				\includegraphics[width=1.1\linewidth]{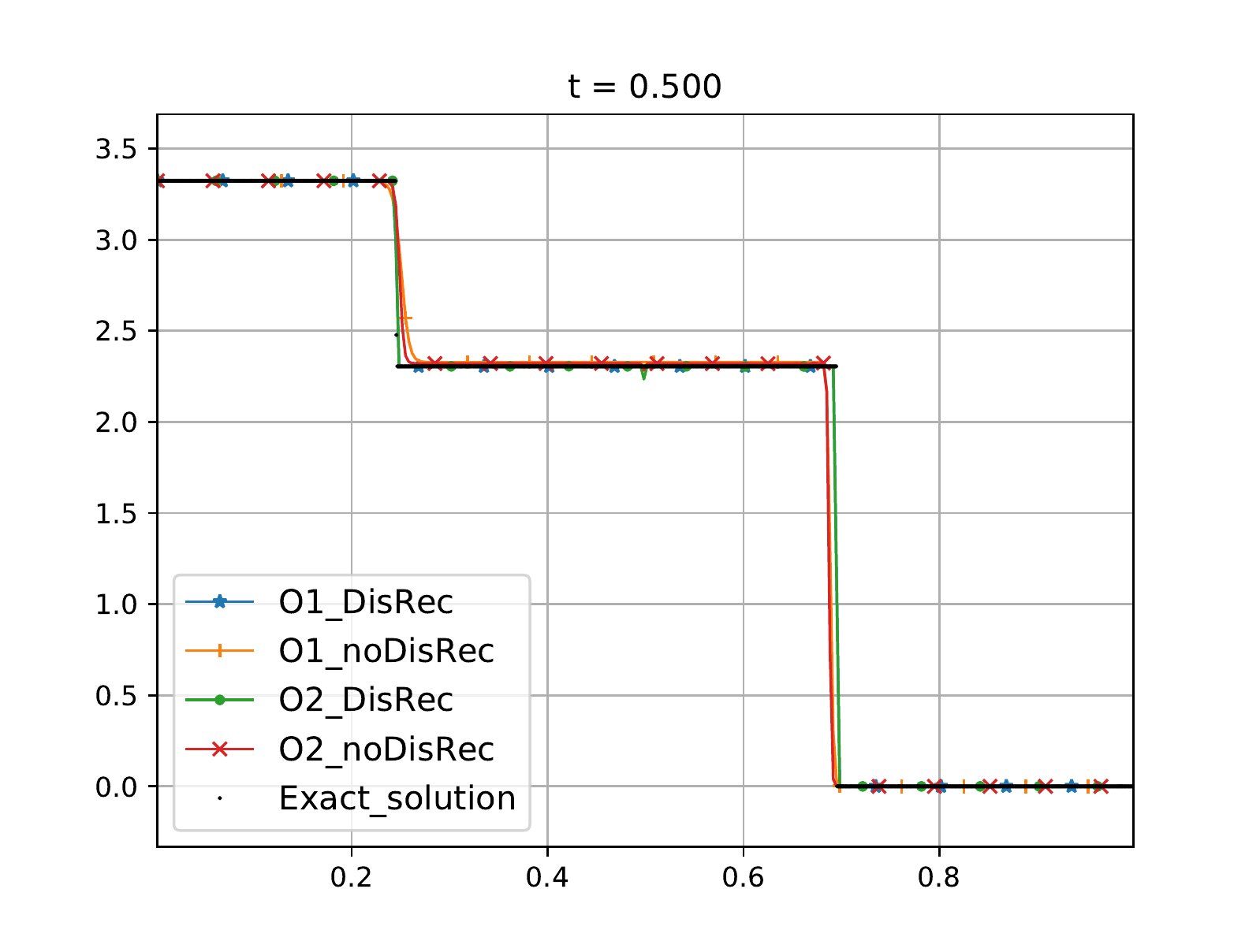}
		\end{subfigure}
		\caption{Gas dynamics equations in Lagrangian coordinates. Test 2: variable $u$. Left: initial condition. Right: exact solution and numerical solutions obtained at time $t = 0.5$ with 300 cells.}
		\label{fig:Gas_Test2_O1_vs_O2_DisRec_vs_noDisRec_Roe_u}
	\end{figure}
	
	\begin{figure}[h]
		\begin{subfigure}{0.5\textwidth}
			\includegraphics[width=1.1\linewidth]{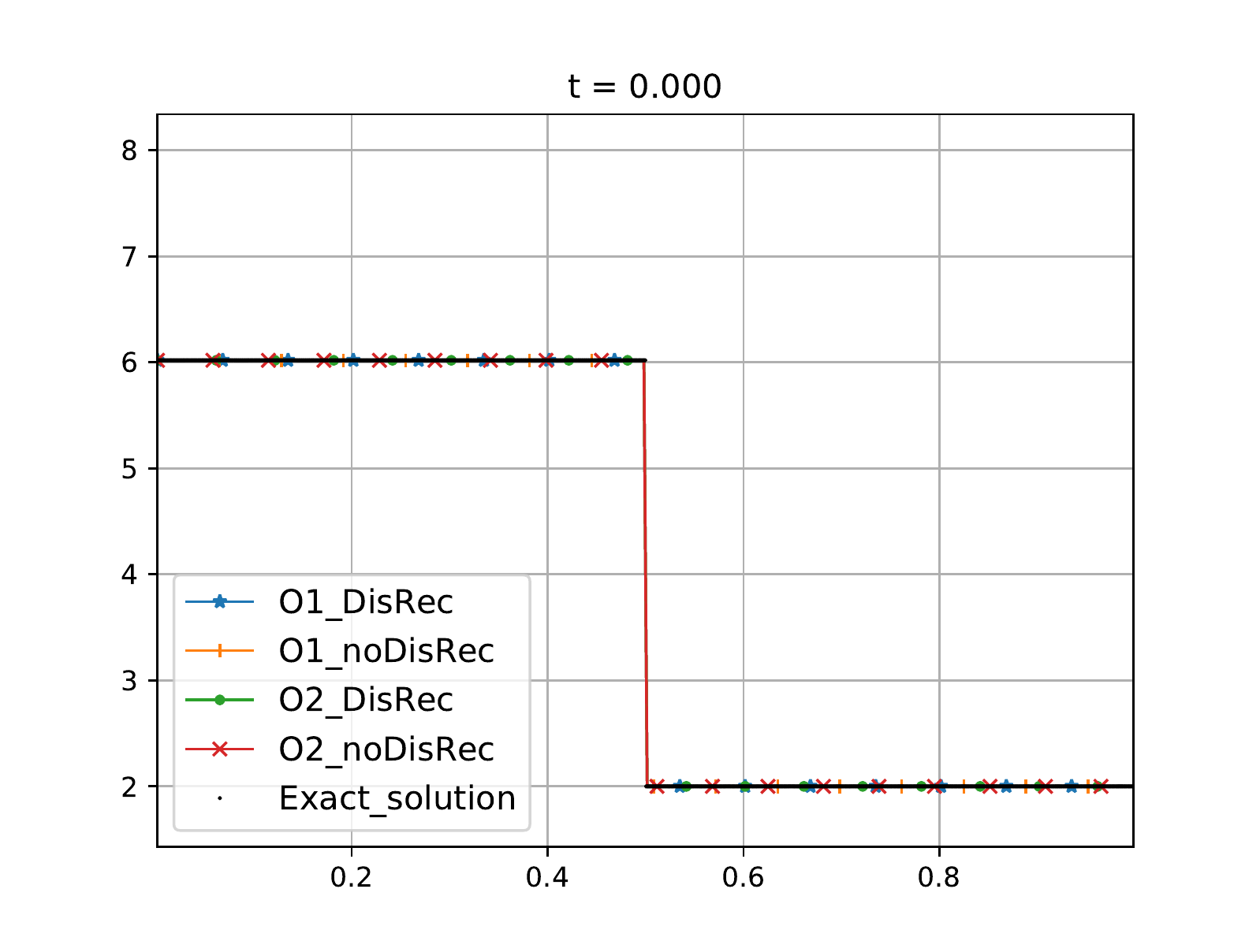}
		\end{subfigure}
		\begin{subfigure}{0.5\textwidth}
				\includegraphics[width=1.1\linewidth]{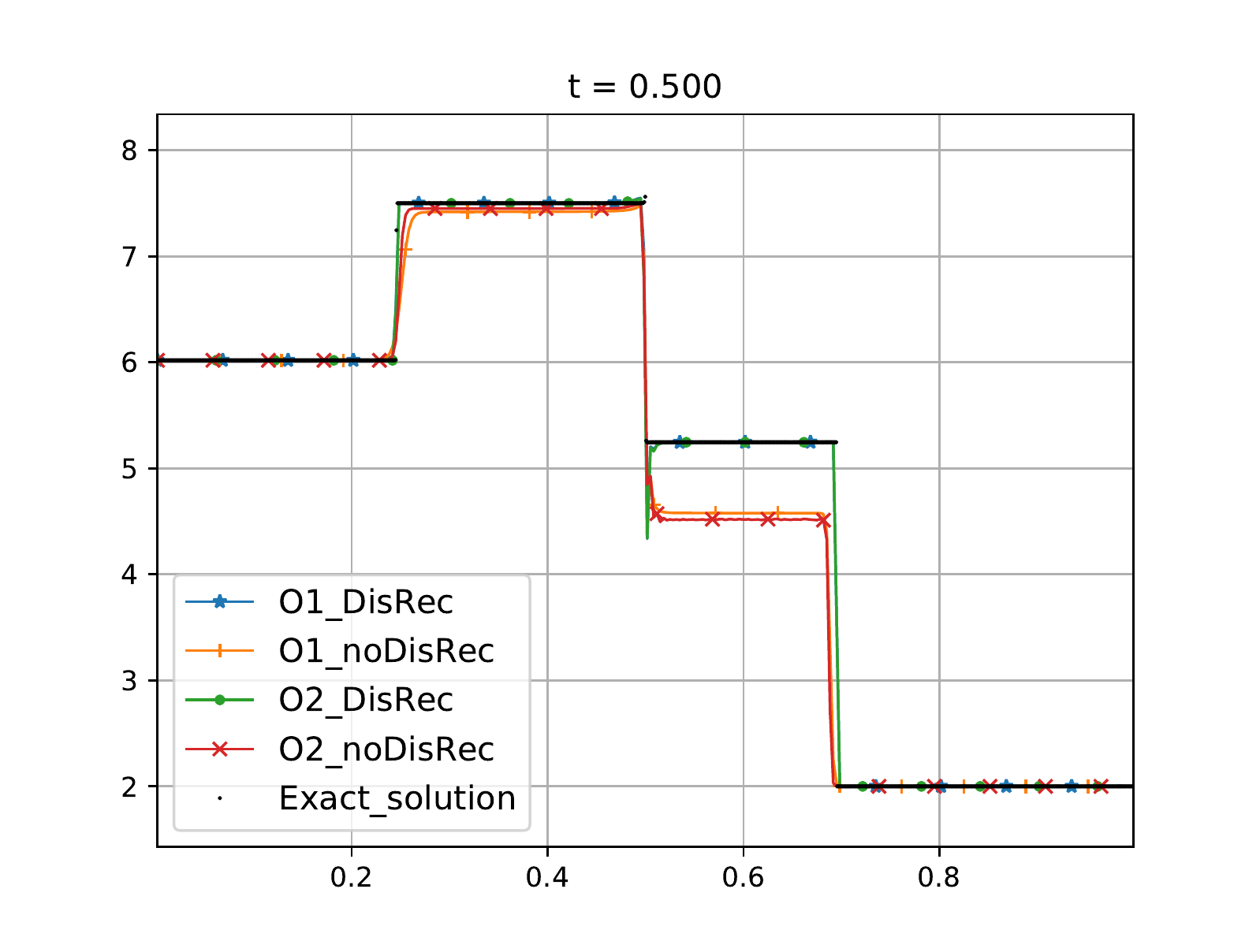}
		\end{subfigure}
		\caption{Gas dynamics equations in Lagrangian coordinates. Test 2: variable $e$. Left: initial condition. Right: exact solution and numerical solutions obtained at time $t = 0.5$ with 300 cells.}
		\label{fig:Gas_Test2_O1_vs_O2_DisRec_vs_noDisRec_Roe_e}
	\end{figure}
	
\subsubsection*{Test 3: 1-rarefaction + contact discontinuity + 3-shock}

Let us consider now the initial condition
$$(\tau, u, p)_{0}(x)= \begin{cases}
     (2.09836065573770281, 3.323013993227, 1) &\text{if $x<0.5$,}  \\
     (8,4,0.1)& \text{otherwise.}
\end{cases}$$
The solution of the Riemann problem consists of a 1-rarefaction wave whose head and tail have negative speeds, a stationary contact discontinuity,  and a 3-shock with positive speed.
Figures \ref{fig:Gas_Test3_O1_vs_O2_DisRec_vs_noDisRec_Roe_tau},  \ref{fig:Gas_Test3_O1_vs_O2_DisRec_vs_noDisRec_Roe_u}, and \ref{fig:Gas_Test3_O1_vs_O2_DisRec_vs_noDisRec_Roe_e} show the numerical solutions at time $t = 0.5$ using a mesh of 300 cells and CFL = 0.5. Although all the methods capture correctly the rarefaction wave, second order methods do it better, as expected; concerning the stationary contact discontinuity and the shock wave,  only the first and second order in-cell discontinuous reconstruction methods capture the exact solution.

\begin{figure}[h]
		\begin{subfigure}{0.5\textwidth}
			\includegraphics[width=1.1\linewidth]{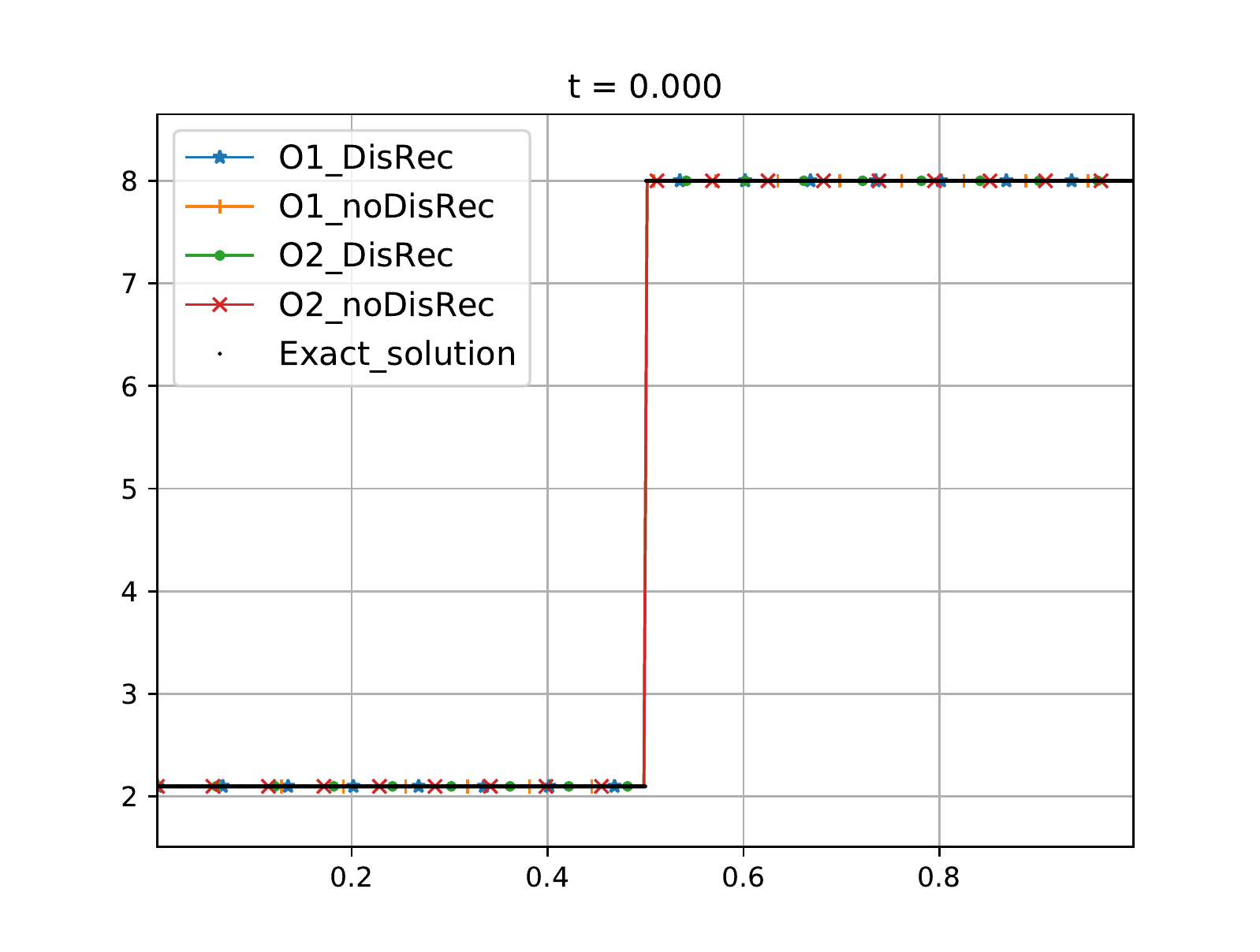}
		\end{subfigure}
		\begin{subfigure}{0.5\textwidth}
				\includegraphics[width=1.1\linewidth]{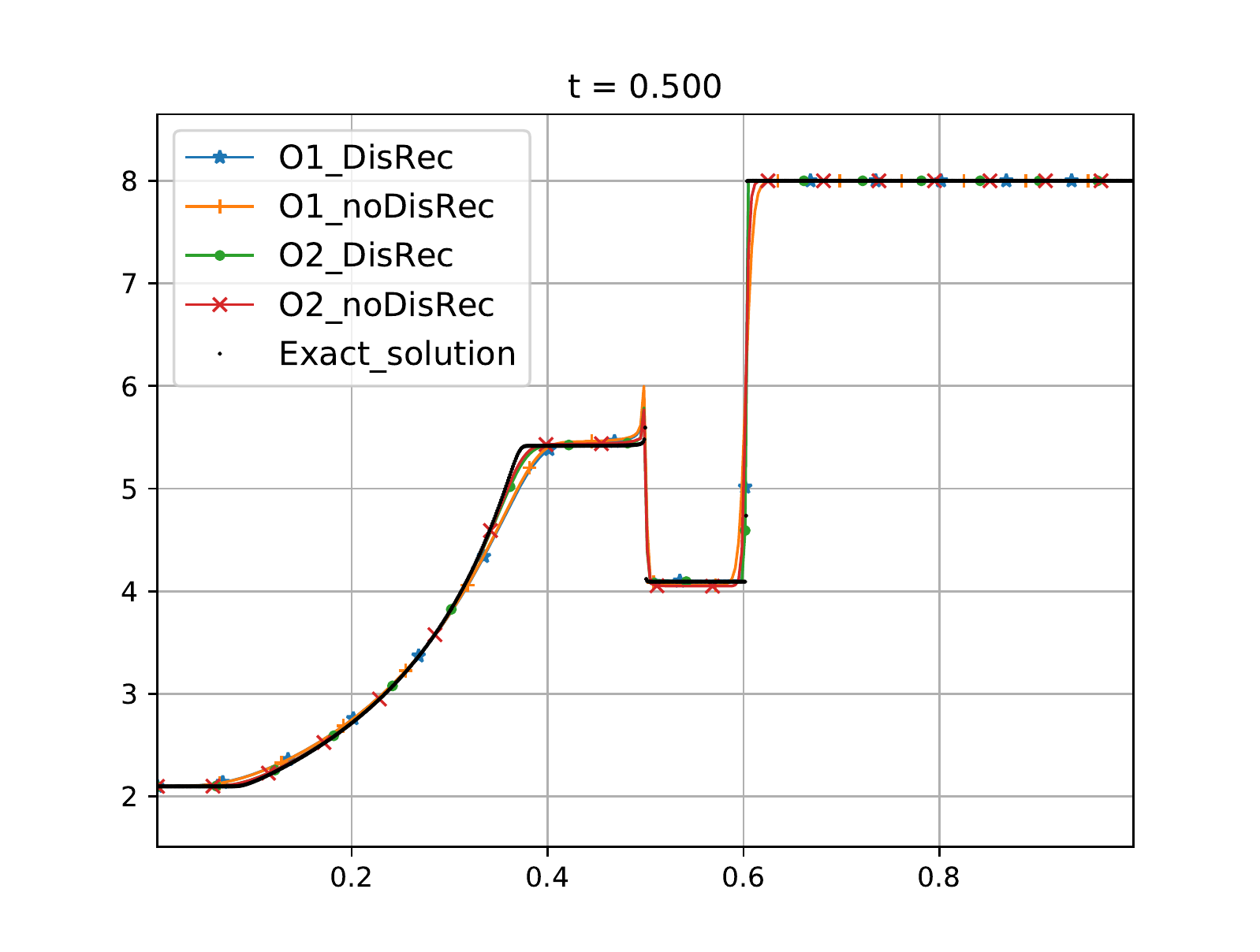}
		\end{subfigure}
		\newline
		\begin{subfigure}{0.5\textwidth}
			\includegraphics[width=1.1\linewidth]{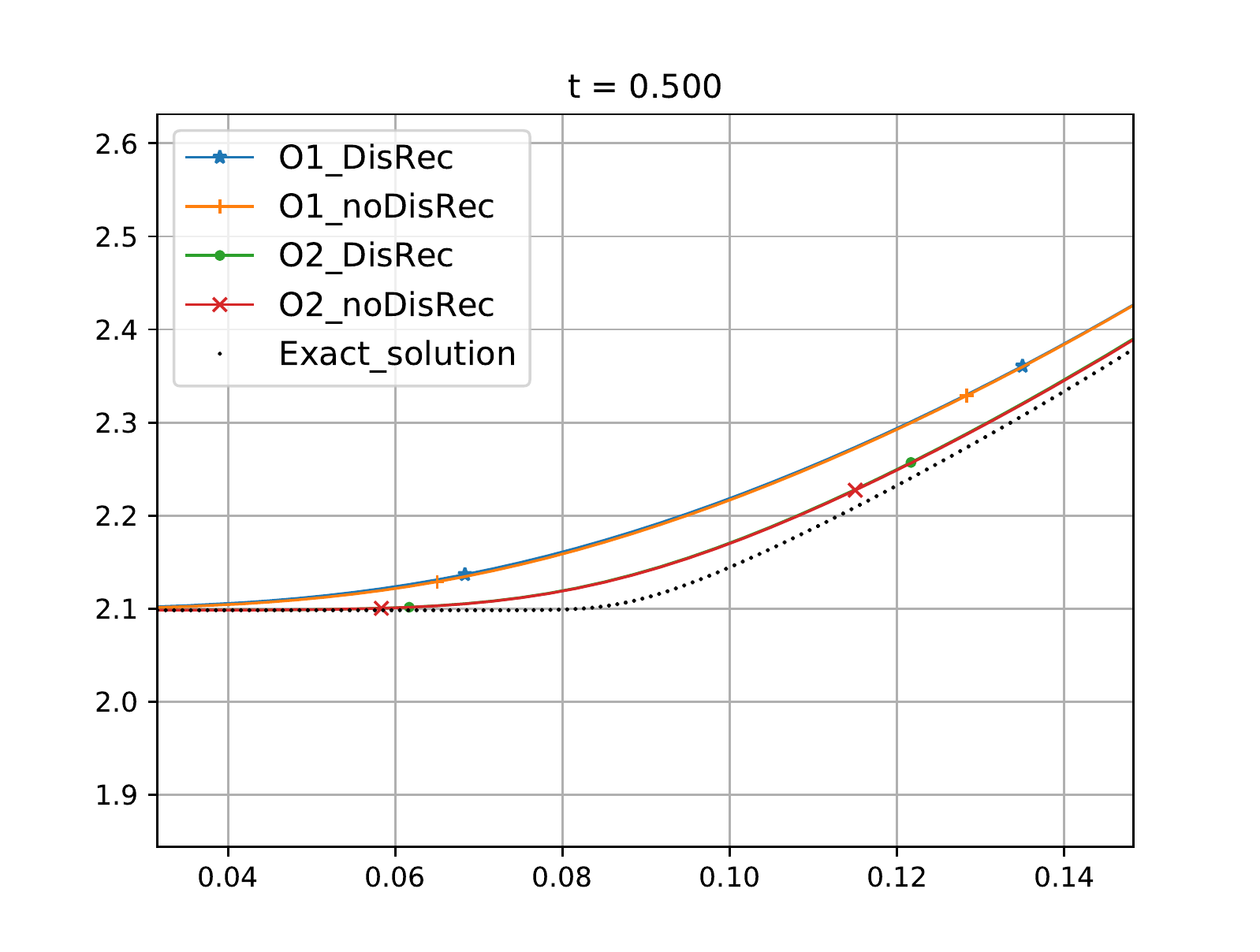}
			\caption{zoom rarefaction}
		\end{subfigure}
		\begin{subfigure}{0.5\textwidth}
				\includegraphics[width=1.1\linewidth]{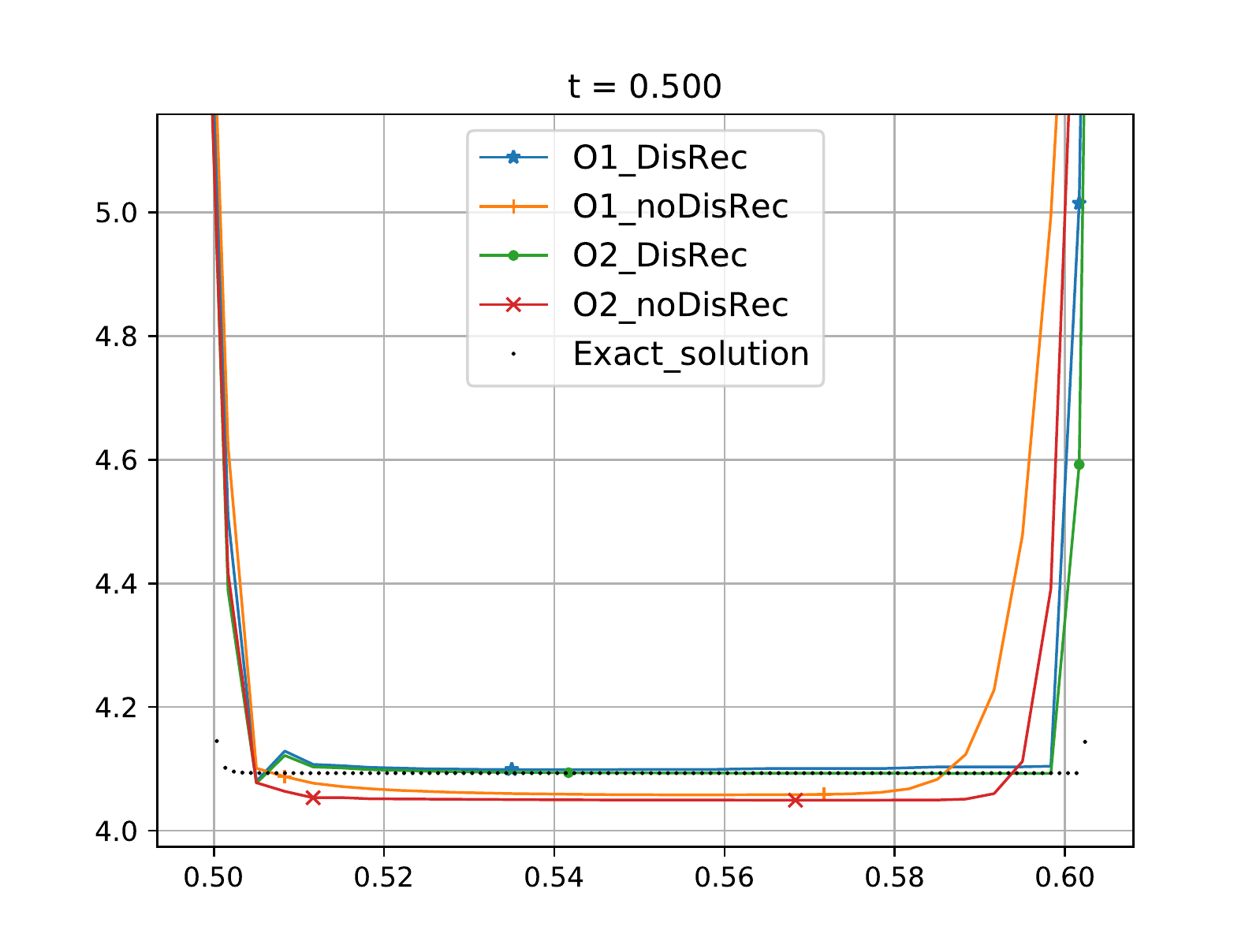}
				\caption{zoom shock}
		\end{subfigure}
		\caption{Gas dynamics equations in Lagrangian coordinates. Test 3: variable $\tau$. Top: initial condition (left),  exact solution and numerical solutions obtained at time $t = 0.5$ with 300 cells (right). Down: zooms of the rarefaction (left) and the shock waves (right) at time $t = 0.5$.}
		\label{fig:Gas_Test3_O1_vs_O2_DisRec_vs_noDisRec_Roe_tau}
	\end{figure}
	
	\begin{figure}[h]
		\begin{subfigure}{0.5\textwidth}
			\includegraphics[width=1.1\linewidth]{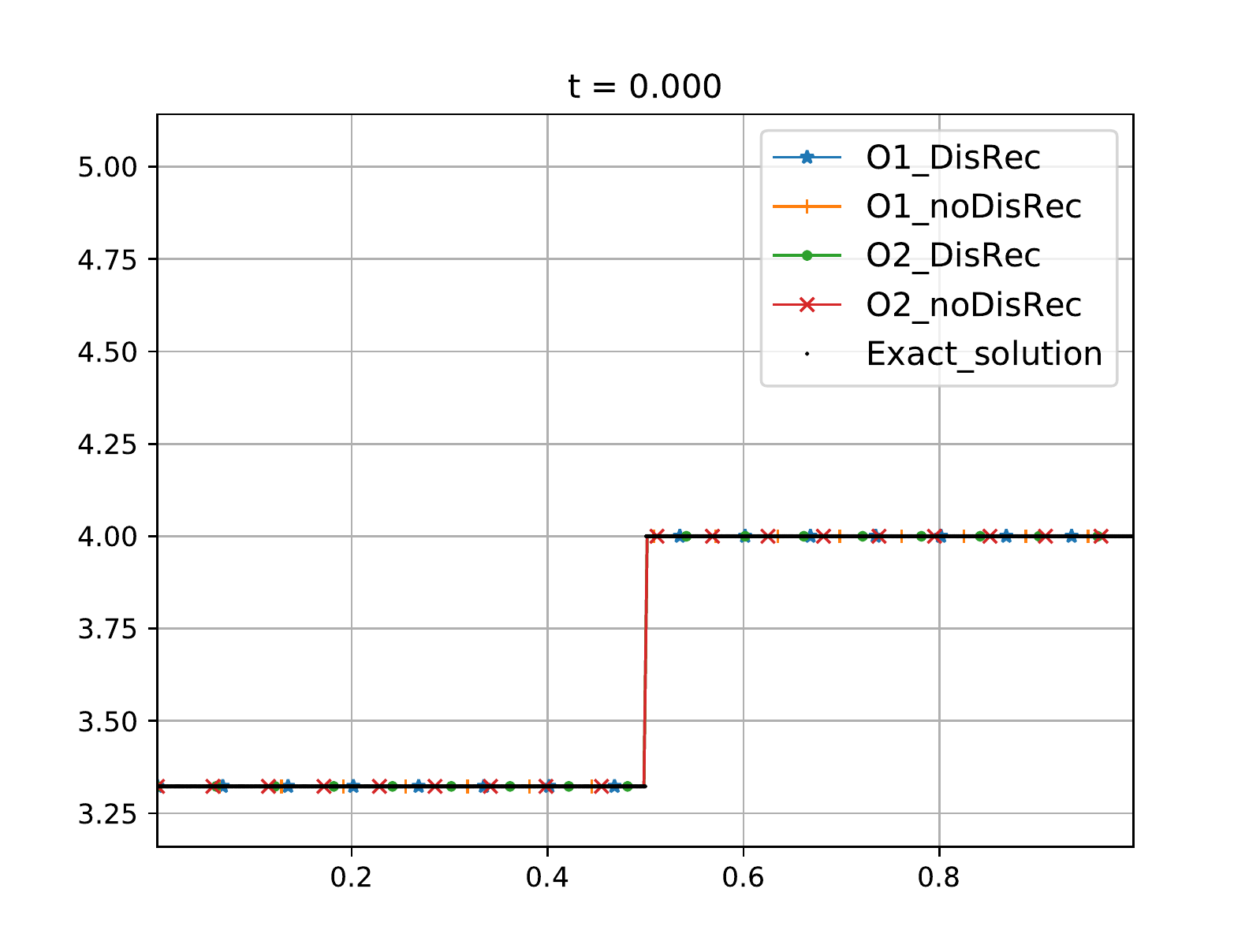}
		\end{subfigure}
		\begin{subfigure}{0.5\textwidth}
				\includegraphics[width=1.1\linewidth]{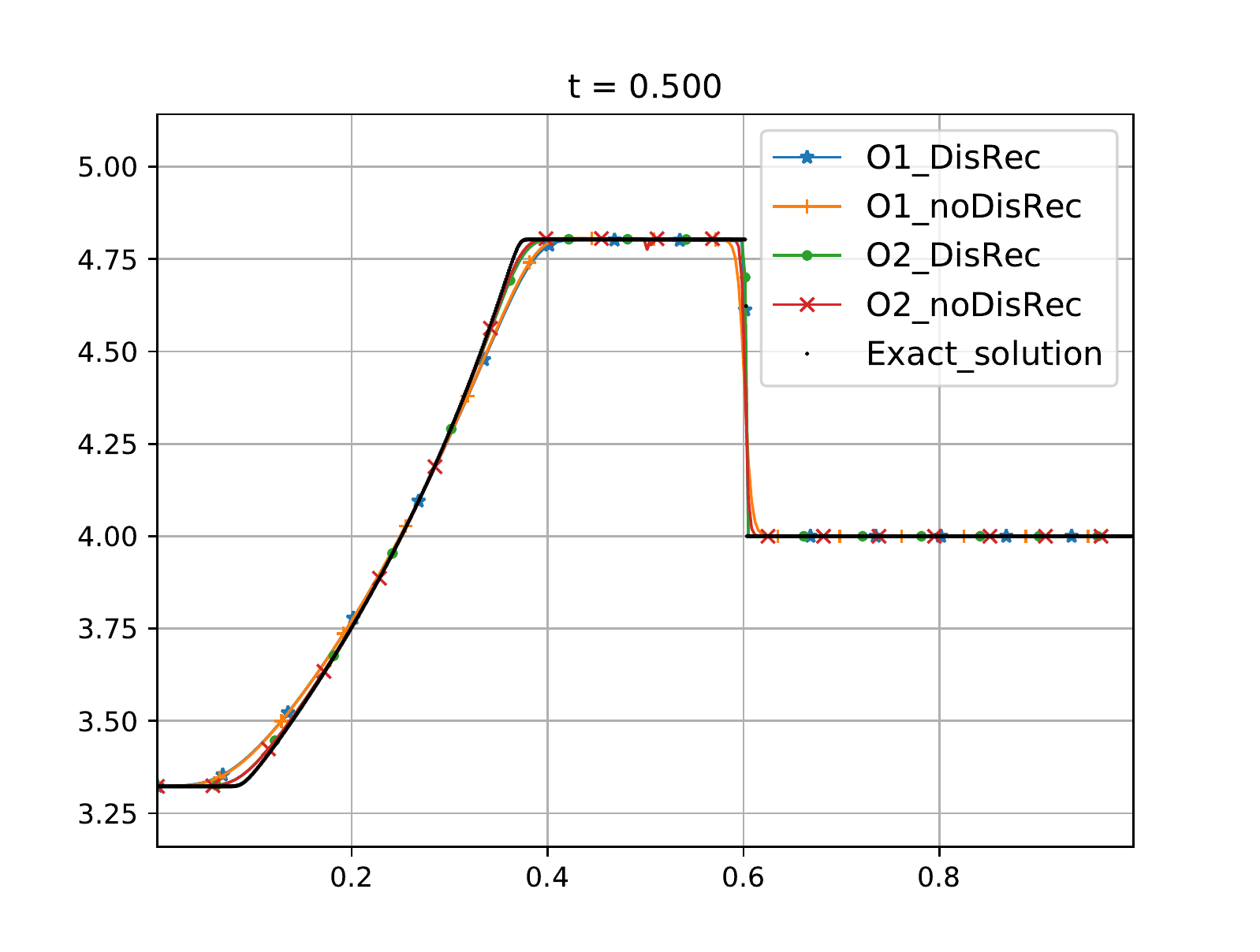}
		\end{subfigure}
		\newline
		\begin{subfigure}{0.5\textwidth}
			\includegraphics[width=1.1\linewidth]{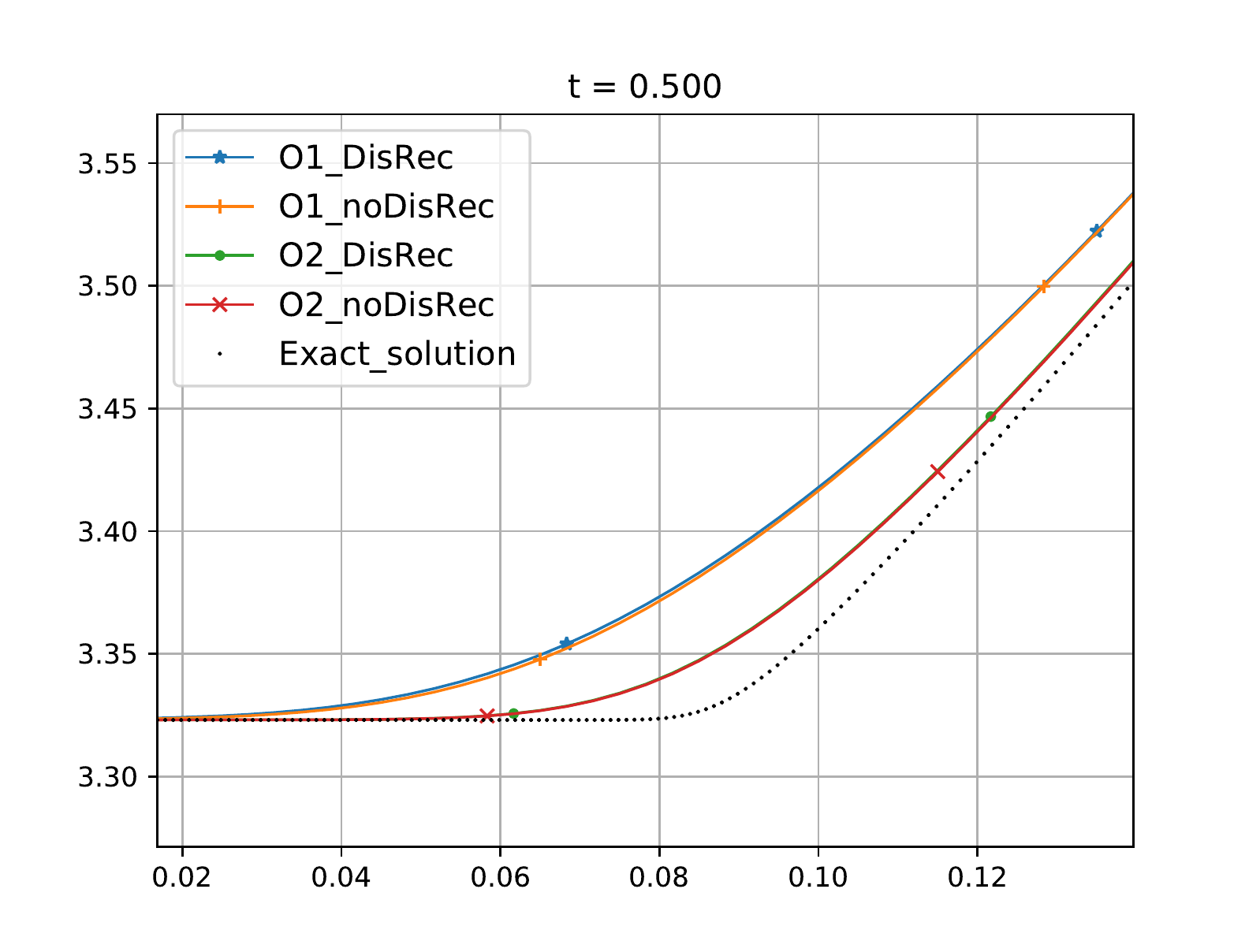}
			\caption{zoom rarefaction}
		\end{subfigure}
		\begin{subfigure}{0.5\textwidth}
				\includegraphics[width=1.1\linewidth]{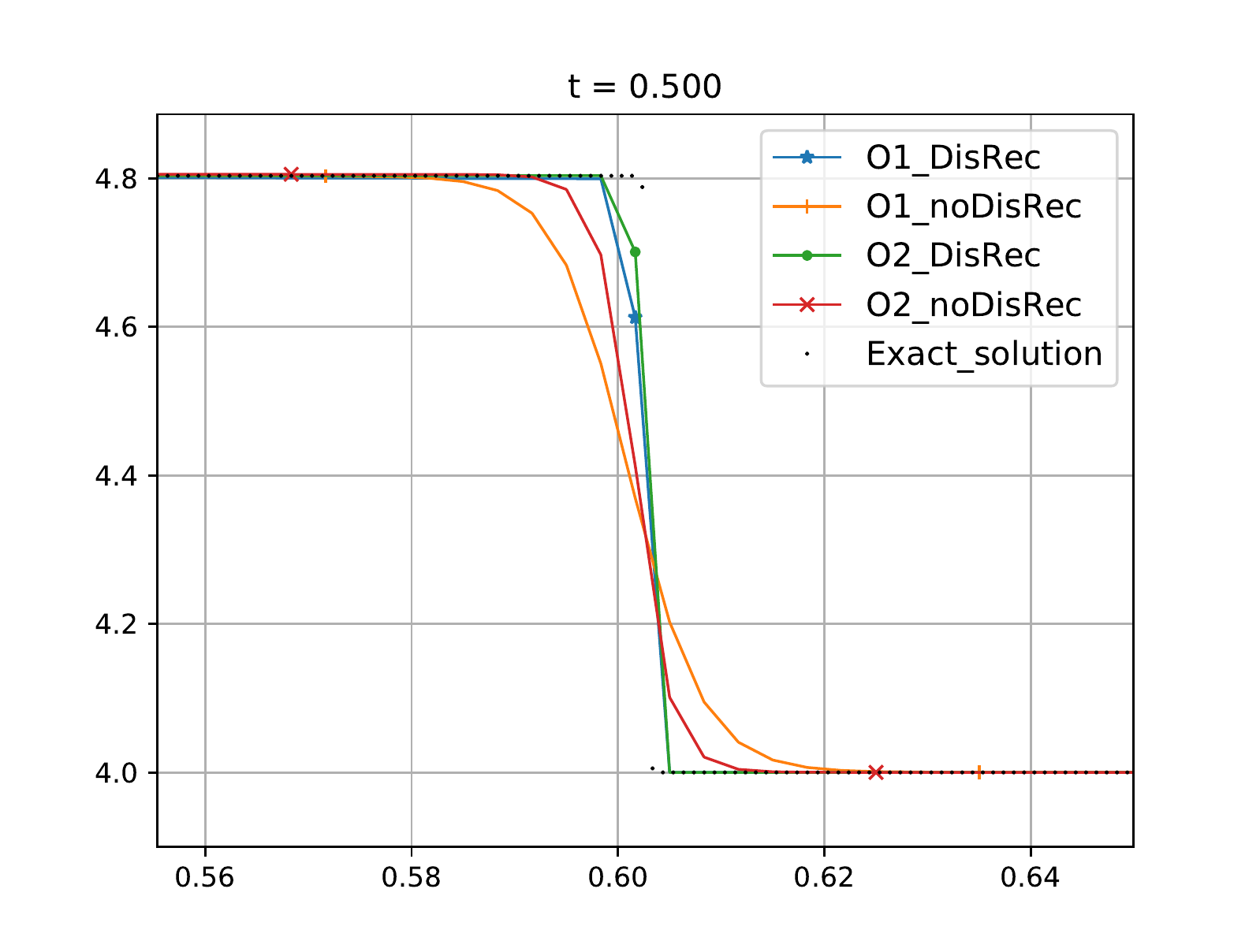}
				\caption{zoom shock}
		\end{subfigure}
		\caption{Gas dynamics equations in Lagrangian coordinates. Test 3: variable $u$. Top: initial condition (left),  exact solution and numerical solutions obtained at time $t = 0.5$ with 300 cells (right). Down: zooms of the rarefaction (left) and the shock waves (right) at time $t = 0.5$.}
		\label{fig:Gas_Test3_O1_vs_O2_DisRec_vs_noDisRec_Roe_u}
	\end{figure}
	
	\begin{figure}[h]
		\begin{subfigure}{0.5\textwidth}
			\includegraphics[width=1.1\linewidth]{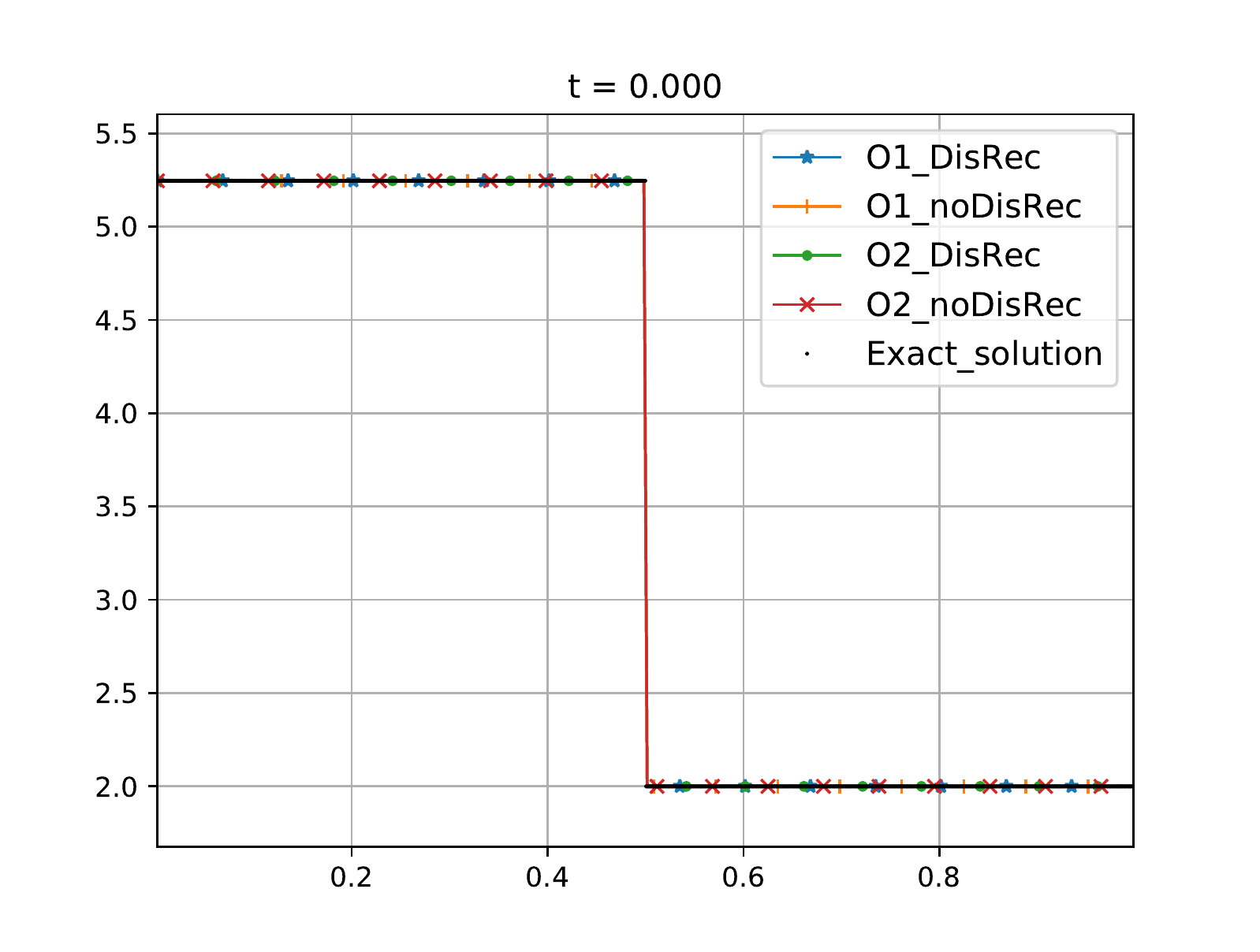}
		\end{subfigure}
		\begin{subfigure}{0.5\textwidth}
				\includegraphics[width=1.1\linewidth]{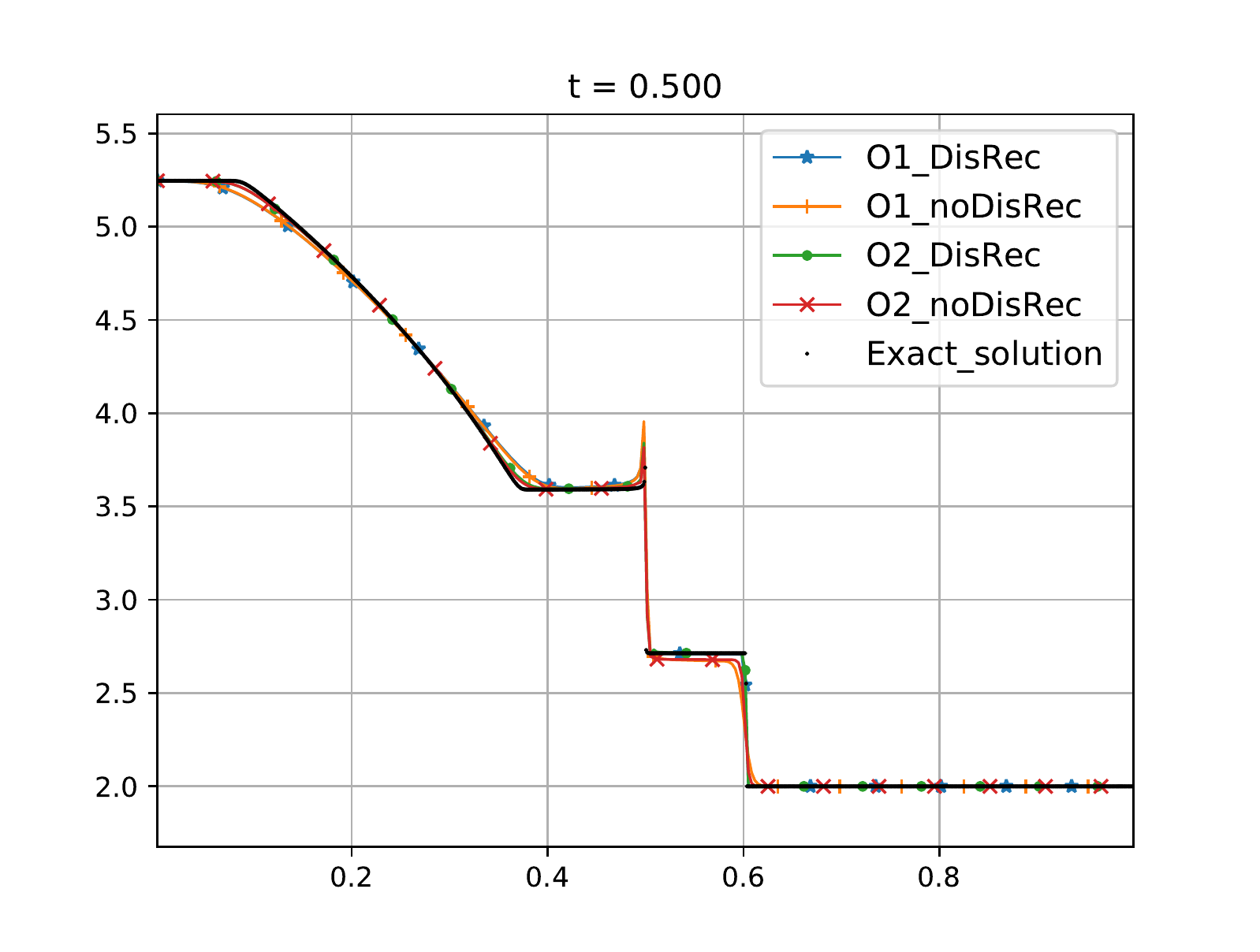}
		\end{subfigure}
		\newline
		\begin{subfigure}{0.5\textwidth}
			\includegraphics[width=1.1\linewidth]{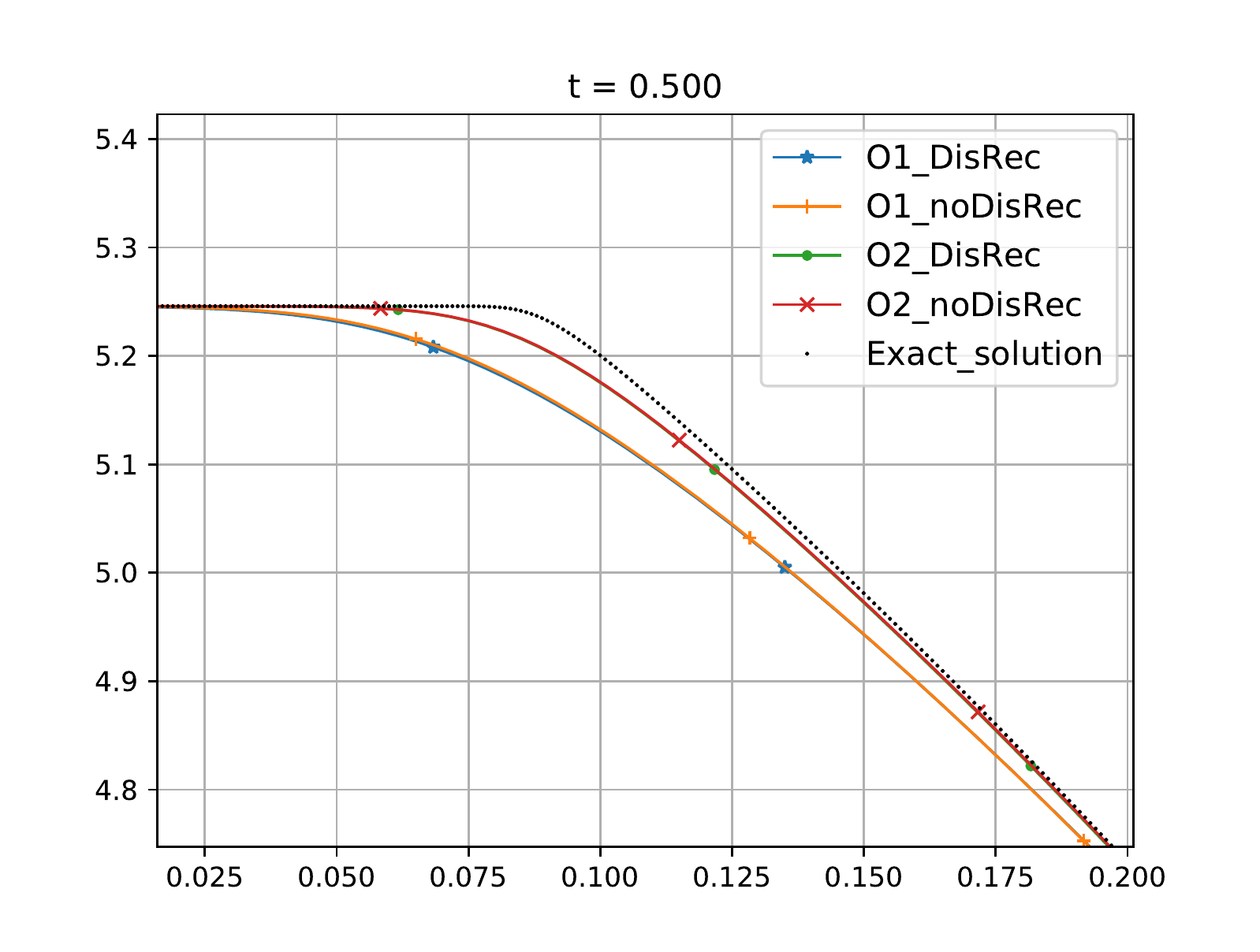}
			\caption{zoom rarefaction}
		\end{subfigure}
		\begin{subfigure}{0.5\textwidth}
				\includegraphics[width=1.1\linewidth]{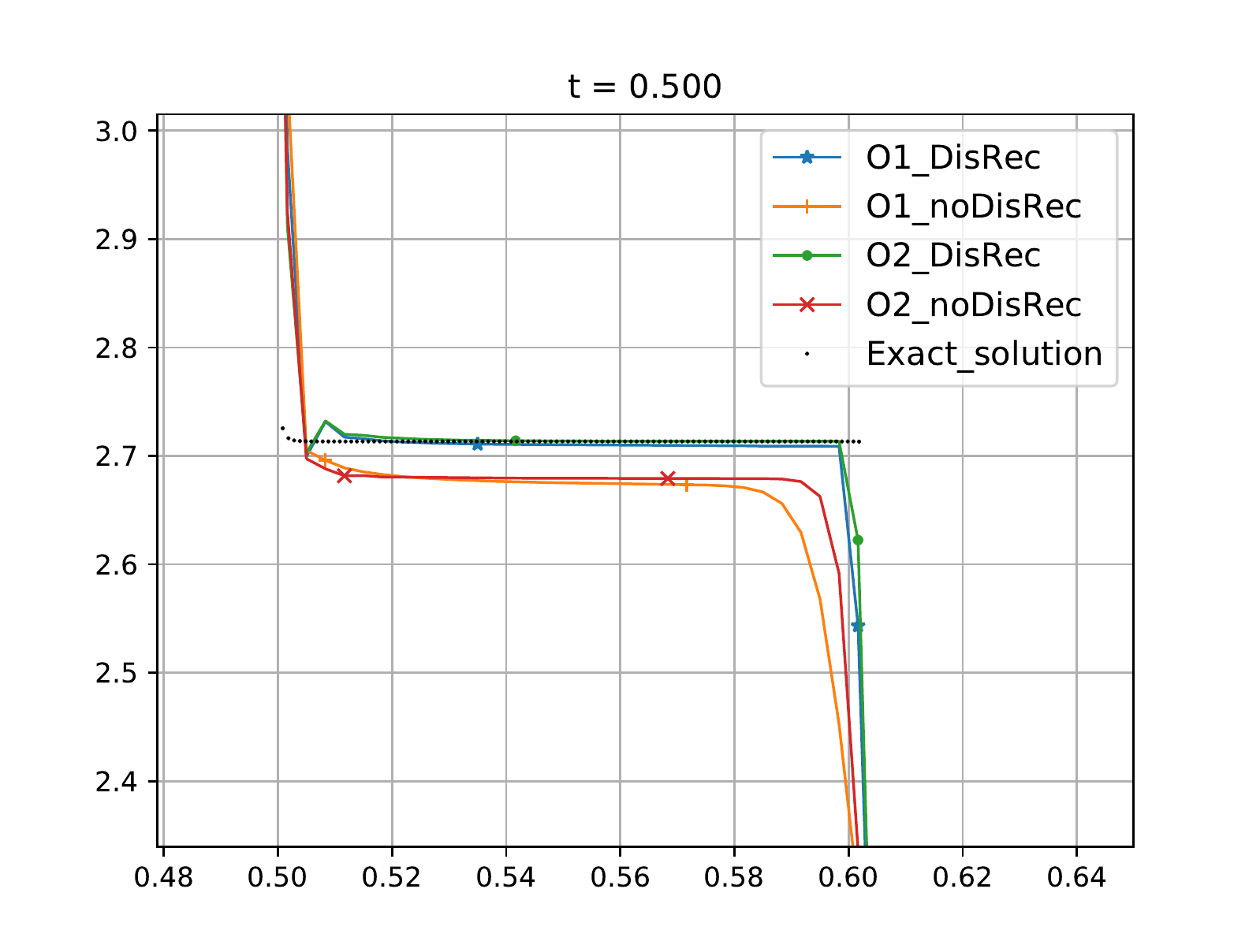}
				\caption{zoom shock}
		\end{subfigure}
		\caption{Gas dynamics equations in Lagrangian coordinates. Test 3: variable $e$. Top: initial condition (left),  exact solution and numerical solutions obtained at time $t = 0.5$ with 300 cells (right). Down: zooms of the rarefaction (left) and the shock waves (right) at time $t = 0.5$.}
		\label{fig:Gas_Test3_O1_vs_O2_DisRec_vs_noDisRec_Roe_e}
	\end{figure}
	
\subsection{Modified Shallow Water system}
Let us consider the modified Shallow Water system introduced in \cite{Castro2008}: 
\begin{equation} \label{simplifiedSW}
\left\{
\begin{array}{l}
\partial_t h + \partial_x q = 0,\\
\smallskip
\displaystyle\partial_t q + \partial_x \left(\frac{q^2}{h}\right) + qh\partial_x h = 0,
\end{array}
\right.
\end{equation}
where $\bu = (h,q)^t$ belongs to $\Omega = \{\bu \in \mathbb{R}^{2}| \quad 0<q, \ 0<h<(16q)^{1/3}\}$. This system can be written in the form (\ref{sys:nonconservative}) with
$$
\mathcal{A}(\bu) = \left[ \begin{array}{cc} 0  & 1 \\ -u^2+uh^2 & 2u \end{array} \right],
$$
being $u=q/h$. The system is strictly hyperbolic $\Omega$ with eigenvalues
$$\lambda_{1}(\bu) = u-h\sqrt{u}, \quad \lambda_{2}(\bu) = u+h\sqrt{u},$$
whose characteristic fields, given by the eigenvectors
$$
R_1(\bu) = [1 , u-h\sqrt{u}]^T, \quad R_2(\bu) = [1, u+h\sqrt{u}]^T,
$$
are genuinely nonlinear. Once the family of paths has been chosen, the simple waves of this system are:
\begin{itemize}
    \item 1-rarefaction waves joining states $\bu_l$, $\bu_r$ such that
    $$ h_r < h_l, \quad \sqrt{u_l} +h_l/2 = \sqrt{u_r} +h_r/2, $$
    and 2-rarefaction waves joining states $\bu_l$, $\bu_r$ such that
    $$ h_l < h_r, \quad \sqrt{u_l} -h_l/2 = \sqrt{u_r} -h_r/2. $$
    
    \item 1-shock and 2-shock waves joining states $\bu_l$ and $\bu_r$ such that
    $h_l <  h_r$ or $h_r < h_l$ respectively, that satisfy the jump conditions:
    \begin{eqnarray*}
    \sigma[h] & = & \left[q \right],\\
    \sigma[q] & = & \left[\frac{q^2}{h}\right] + \int_0^1 \phi_q(s; \bu_l, \bu_r) \phi_h(s; \bu_l, \bu_r) 
    \partial_s \phi_h(s; \bu_l, \bu_r) \,ds.
    \end{eqnarray*}
\end{itemize}
If, for instance, the following family of path is chosen:
$$
\phi(s; \bu_l, \bu_r) =  \left[\begin{array}{c}
     \phi_h(s; \bu_l, \bu_r)  \\
     \phi_q(s; \bu_l, \bu_r) 
\end{array}\right] = \left\{\begin{array}{l}
     \left[\begin{array}{c}
     h_l + 2s(h_r-h_l)  \\
     q_l 
\end{array}\right] \quad \text{if $0\leq s \leq \frac{1}{2}$},  \\
\\
     \left[\begin{array}{c}
     h_r  \\
     q_l + (2s-1)(q_r-q_l) 
\end{array}\right] \quad \text{if $\frac{1}{2} \leq s \leq 1$},
\end{array}\right.
$$
the jump conditions reduce to:
    \begin{eqnarray*}
    \sigma[h] & = & \left[q \right],\\
    \sigma[q] & = & \left[\frac{q^2}{h}\right] + q_l  \left[\frac{h^2}{2}\right].
    \end{eqnarray*}
 If this family of paths has been selected and Lax's entropy criterion is used, the simple waves of the system are as follows:
    \begin{itemize}
        \item Given a left-hand state $\bu_l$, the 1-shock $\mathcal{S}_1(\bu_l)$ and the 2-shock $\mathcal{S}_2(\bu_l)$ curves consisting of all the right-hand states that can be connected with $\bu_l$ through a 1-shock and a 2-shock wave respectively, are:
        \begin{equation}
            \mathcal{S}_1(\bu_l): u=u_l-\sqrt{\frac{u_l(h+h_l)}{2h}}(h-h_l), \quad h>h_l,
        \end{equation}
        \begin{equation}
            \mathcal{S}_2(\bu_l): u=u_l-\sqrt{\frac{u_l(h+h_l)}{2h}}(h-h_l), \quad h<h_l.
        \end{equation}
        Moreover, given two states $\bu_l$ and $\bu_r$ connected by a 1-shock wave or a 2-shock wave, the speed of the shock is given by:
        \begin{equation}
            \sigma_1(\bu_l,\bu_r) = u_l-\sqrt{h_ru_l\frac{h_l+h_r}{2}},
        \end{equation}
        \begin{equation}
            \sigma_2(\bu_l,\bu_r) = u_l+\sqrt{h_ru_l\frac{h_l+h_r}{2}},
        \end{equation}
        respectively.
        
        \item Given a left-hand state $\bu_l$, the 1-rarefaction $\mathcal{R}_1(\bu_l)$ and the 2-rarefaction $\mathcal{R}_2(\bu_l)$ consisting of all the right-hand states that can be connected with $\bu_l$ through a 1-rarefaction and a 2-rarefaction wave, respectively, are:
        \begin{equation}
            \mathcal{R}_1(\bu_l): u=\left(\frac{h_l-h}{2} + \sqrt{u_l}\right)^2, \quad h<h_l,
        \end{equation}
        \begin{equation}
            \mathcal{R}_2(\bu_l): u=\left(\frac{h-h_l}{2} + \sqrt{u_l}\right)^2, \quad h>h_l.
        \end{equation}
    \end{itemize}

The criterion to mark the cells  is the following:
\begin{enumerate}
    \item If $h^n_{j+1} > h^n_{j-1}$ and
        $$
        u^n_{j-1} -\sqrt{\frac{u^n_{j-1}
        (h^n_{j+1}+h^n_{j-1})}{2h^n_{j+1}}}(h^n_{j+1}-h^n_{j-1}) < u^n_{j+1}< \left(\frac{h^n_{j+1}-h^n_{j-1}}{2}+\sqrt{u^n_{j-1}}\right)^2,
        $$
        the solution of the Riemann problem consists of a 1-shock and a 2-rarefaction waves:   the cell is marked.
    \item If $h^n_{j+1} < h^n_{j-1}$ and 
        $$
        u^n_{j-1} +\sqrt{\frac{u^n_{j-1}
        (h^n_{j+1}+h^n_{j-1})}{2h^n_{j+1}}}(h^n_{j+1}-h^n_{j-1}) < u^n_{j+1}< \left(\frac{h^n_{j-1}-h^n_{j+1}}{2}+\sqrt{u^n_{j-1}}\right)^2,
        $$
        the solution of the Riemann problem consists of a 1-rarefaction and a 2-shock waves: the cell is marked. 
    \item If $h^n_{j+1} > h^n_{j-1}$ and 
    $$
    u^n_{j+1}< u^n_{j-1} -\sqrt{\frac{u^n_{j-1}
    (h^n_{j+1}+h^n_{j-1})}{2h^n_{j+1}}}(h^n_{j+1}-h^n_{j-1}),
    $$
    or $h^n_{j+1} < h^n_{j-1}$ and
     $$
     u^n_{j+1}< u^n_{j-1} +\sqrt{\frac{u^n_{j-1}
    (h^n_{j+1}+h^n_{j-1})}{2h^n_{j+1}}}(h^n_{j+1}-h^n_{j-1}),
    $$
    the solution of the Riemann problem consists of  a 1-shock and a 2-shock waves: the cell is marked.  
    \item Otherwise the solution of the Riemann problem consists of two rarefactions and the cell is not marked.
\end{enumerate}

A Roe matrix is given in this case by
$$
\mathcal{A}(\bu_l,\bu_r)= \left[\begin{array}{cc}
   0  &  1\\
   -\bar{u}^{2}+q_l\bar{h}  & 2\bar{u}
\end{array}\right],
$$
where
$$
\bar{u} = \frac{\sqrt{h_l}u_{l} + \sqrt{h_r}u_{r}}{\sqrt{h_l}+\sqrt{h_r}}, \quad \bar{h} = \frac{h_l+h_r}{2}.
$$
The following strategy based on this Roe matrix (see Subsection \ref{ss:strategy}) is used to select the speed, and the left and right states of the discontinuous reconstruction: 
\begin{itemize}
    \item If the solution of the Riemann problem consists of a  1-shock and a 2-rarefaction waves (case 1):
        $$
        \sigma_j^n =\bar{u} - h_{j-1}^n\sqrt{\bar{u}}, \quad  \bu^n_{j,l} = \bu^n_{j-1},  \quad \bu^n_{j,r} = \bu^n_{j-1} + \alpha_{1}R_1(\bu^n_{j-1},  \bu^n_{j+1}),
        $$
        where $\bar u$ is the Roe average of $u^n_{j-1}$ and $u^n_{j+1}$,  and $\alpha_{k}$, $k=1, 2$ represent the coordinates of $\bu^n_{j+1}-\bu^n_{j-1}$ in the basis of eigenvectors of the Roe matrix, i.e. $ \bu^n_{j+1}-\bu^n_{j-1} = \sum_{k=1}^{2} \alpha_{k}R_{k}(\bu^n_{j-1}, \bu^n_{j+1})$.
    \item If the solution of the Riemann problem consists of a 1-rarefaction and a 2-shock waves (case 2):
        $$
         \sigma_j^n =\bar{u} + h_{j-1}^n\sqrt{\bar{u}}, \quad \bu^n_{j,l} =  \bu^n_{j+1} - \alpha_{2}R_2(\bu^n_{j-1},  \bu^n_{j+1}), \quad \bu^n_{j,r} = \bu^n_{j+1}.
        $$
    \item If the solution of the Riemann problem consists of  a 1-shock and a 2-shock waves (case 3) we select one of them depending on the amplitude of the $\alpha_1$ and $\alpha_2$ coefficients in order to choose the 'dominant' one: 
    \begin{itemize}
        \item If $|\alpha_1| \leq |\alpha_2|$ then:
        $$
        \sigma_j^n =\bar{u} + h_{j-1}^n\sqrt{\bar{u}}, \quad \bu^n_{j,l} =  \bu^n_{j+1} - \alpha_{2}R_2(\bu^n_{j-1},  \bu^n_{j+1}), \quad \bu^n_{j,r} = \bu_{j+1}.
        $$
        \item If $|\alpha_1| > |\alpha_2|$ then:
        $$
        \sigma_j^n =\bar{u} - h_{j-1}^n\sqrt{\bar{u}}, \quad  \bu^n_{j,l} = \bu^n_{j-1},  \quad \bu^n_{j,r} = \bu^n_{j-1} + \alpha_{1}R_1(\bu_{j-1},  \bu_{j+1}).
        $$
    \end{itemize}
\end{itemize}
The variable $h$ is selected in \eqref{conservation}.

According to Theorem \ref{th}, the corresponding first and second-order in-cell discontinuous reconstruction methods capture correctly isolated shock waves and, as it will be also seen in Test \hyperref[ssstest3]{3}, it also captures correctly the solution of Riemann problems consisting of two shock waves traveling in the same direction. Nevertheless, although it improves the results obtained with the standard methods and gets closer to the exact solution when the mesh is refined, it fails in capturing exactly the solution of Riemann problems involving two shocks traveling in opposite directions: the reason is that the intermediate state linking the two shocks is not exactly captured by Roe method.


Nevertheless, a more sophisticated strategy based on the exact solution of the Riemann problems (see Subsection \ref{ss:strategy}) allows one to  handle correctly with these situations. The key ingredients are:
\begin{itemize}
    \item The solution of the Riemann problem with initial data $\bu^{n-1}_{j-1,r}$ and $\bu^{n-1}_{j+1,l}$ is used to mark the cells instead of the one corresponding to the initial data $\bu^n_{j-1}$ and $\bu^n_{j+1}$, where $\bu^{n-1}_{j-1,r}$ and $\bu^{n-1}_{j+1,l}$ are the states selected in the discontinuous reconstruction in the previous time step.
    
    \item The exact intermediate state is used when the solution of the Riemann problem involves two shock  waves.
    
    \item If the solution of this Riemann problem involves two shock waves traveling in the same direction, a reconstruction with two discontinuities (one for each of the shock waves)  is considered, so that the complete structure of the Riemann solution is imposed.
    
\end{itemize}

 In order to avoid an excess of indices the following notation will be used:
$$
\bu^{n-1}_{j-1,r} = \bu_L = [h_L,q_L]^T, \quad \bu^{n-1}_{j+1,l}=\bu_R = [h_R,q_R]^T.
$$
The discontinuous reconstruction is then as follows:

\begin{itemize}
        \item If the solution of the Riemann problem consists of 1-shock and a 2-rarefaction (case 1) then
        $$
        \sigma_j^n =\sigma_1(\bu_l,\bu_*), \quad  \bu^n_{j,l} = \bu^n_{j-1},  \quad \bu^n_{j,r} = \bu_*,
        $$        
        where $\bu_* = [h_*, q_*]^T$ is the intermediate state in the solution of the Riemann problem: $h_*$ is  the root of the function: 
        $$
        f_{s,r}(h) = \left(\frac{h-h_r}{2}+\sqrt{u_r}\right)^2 - u_l + \sqrt{\frac{u_l(h+h_l)}{2h}}(h-h_l),
        $$
        such that $h_l<h_*<h_r$. Once $h_*$ has been computed,  $q_*$ is given by
        $$
        q_* = h_*\left(\frac{h_*-h_r}{2}+\sqrt{u_r}\right)^2.
        $$

        \item If the solution of the Riemann problem consists of  a 1-rarefaction and a 2-shock (case 2), then:
      $$
        \sigma_j^n =\sigma_2(\bu_*,\bu_r), \quad  \bu^n_{j,l} = \bu_*,  \quad \bu^n_{j,r} = \bu^n_{j+1},
        $$        
        where $\bu_* = [h_*, q_*]^T$ is the intermediate state: $h_*$ is the root of the function: 
        $$
        f_{r,s}(h) = \left(\frac{h_l-h}{2}+\sqrt{u_l}\right)\left(\frac{h_l-h}{2}+\sqrt{u_l}+\sqrt{\frac{h_r+h}{2h_r}}(h_r-h)\right)-u_r,
        $$
        such that $h_r<h_*<h_l$. Once $h_*$ has been computed, $q_*$ is given by
        $$
        q_* = h_*\left(\frac{h_l-h_*}{2}+\sqrt{u_l}\right)^2.
        $$

    \item If the solution of the Riemann problem consists of  a 1-shock and a 2-shock (case 3), the intermediate state $\bu_* = [h_*, q_*]^T$ can be computed as follows: $h_*$ is the root of the function 
    $$
    f_{s,s}(h) = u_*(h) + \sqrt{\frac{u_*(h)(h+h_r)}{2h_r}}(h_r-h)-u_r,
    $$
    where
    $$
    u_*(h) = u_l - \sqrt{\frac{u_l(h+h_l)}{2h}}(h-h_l),
    $$
    such that $h_*< h_l$ and $h_*<h_r$. Once  $h_*$ has been computed, $q_*$ is obtained by:
    $$ q_* = h_*u_*(h_*).$$
    Let us denote by $\sigma_1$ and $\sigma_2$ the speeds of the 1 and the 2 shock waves $\sigma_1(\bu_l, \bu_*)$ and $\sigma_2(\bu_*, \bu_r)$.
    The discontinuous reconstruction is then selected as follows:
    \begin{itemize}
        \item If $\sigma_1< 0 < \sigma_2$: let $d_1$ and $d_2$ be given by
        $$
        d_1 = \frac{h_*-h^n_j}{h_*-h_l}, \quad d_2 = \displaystyle \frac{h_r-h^n_j}{h_r-h_*}.
        $$
        Then:
        \begin{itemize}
         \item If  $|\sigma_1| \leq |\sigma_2|$: 
            \begin{itemize}
                    \item If $0\leq d_2 \leq 1$, then 
                     $$
                     \sigma_j^n =\sigma_2, \quad \bu^n_{j,l} =  \bu_*, \quad \bu^n_{j,r} = \bu^n_{j+1}.
                     $$
                     \item Otherwise, if $0\leq d_1 \leq 1$, then 
                     $$
                     \sigma_j^n =\sigma_1, \quad  \bu^n_{j,l} =  \bu^n_{j-1},  \quad \bu^n_{j,r} = \bu_*.
                     $$
                \end{itemize}
         \item If   $|\sigma_1| > |\sigma_2|$: 
            \begin{itemize}
                    \item If $0\leq d_1 \leq 1$, then 
                     $$
                     \sigma_j^n =\sigma_1, \quad  \bu^n_{j,l} =  \bu^n_{j-1},  \quad \bu^n_{j,r} = \bu_*.
                     $$
                     \item Otherwise, if  $0\leq d_2 \leq 1$, then
                     $$
                     \sigma_j^n =\sigma_2, \quad \bu^n_{j,l} =  \bu_*, \quad \bu^n_{j,r} = \bu^n_{j+1}.
                     $$

                \end{itemize}

        \end{itemize}
    \item Otherwise (i.e. if $0 \leq  \sigma_1 < \sigma_2$  or $\sigma_1 < \sigma_2 \leq 0$): let  $d_1$ and $d_2$ be such that
                \begin{equation}\label{conservation_2}
                \begin{cases}
                 d_1 h_{l} + (d_2 -  d_1) h_{*} + (1-d_2) h_r = h_{j}^n, \\
                 d_1 q_{l} + (d_2 -  d_1) q_{*} + (1-d_2) q_r = q_{j}^n.
                \end{cases}
                \end{equation}
                Then:
                \begin{equation}\label{Reconstruction_operator_2}
                    P_{j}^{n}(x,t) = 
                \begin{cases}
                \bu_l & \text{ if $x \leq x_{j - 1/2} + d_1 \Delta x + \sigma_1 (t -t_n) $,}\\
                \bu_* &  \text{ if $x_{j - 1/2} + d_1 \Delta x + \sigma_1(t -t_n) \leq x \leq x_{j - 1/2} + d_2 \Delta x + \sigma_2 (t -t_n)$,}\\
                \bu_r & \text{otherwise.}
                \end{cases}
                \end{equation}
                 This in-cell discontinuous reconstruction can only be done if $0 \leq d_1, d_2 \leq 1$, otherwise the cell is unmarked. Moreover, if $d_1 = d_2 = 1$ and the speeds of the shocks are positive (resp. if $d_1 = d_2 =0$ and the speeds of the shocks are negative) the  cell is unmarked and the cell $I_{j+1}$ (resp. the cell $I_{j-1}$) is marked if necessary.

\end{itemize}

\end{itemize}

Observe that, when the speeds of the shocks have the same sign, the discontinuous reconstruction coincides with the solution of the Riemann problem. 

The numerical methods using the first strategy for the discontinuous reconstruction (based on the Roe matrix) will be labeled again by
 O$p$\_DisRec  and those using the second one (based on the exact solutions of the Riemann problems) by 
 O$p$\_ExactDisRec.

\subsubsection*{Test 1: Isolated 1-shock}

Let us consider the following initial condition taken from \cite{Castro2008}

$$
(h,q)_{0}(x)= \begin{cases}
     (1, 1) & \text{if $x<0$,}  \\
     (1.8,0.530039370688997) & \text{otherwise.}
\end{cases} 
$$
The solution of the Riemann problem consists of a 1-shock wave joining the left and right states. Figure \ref{fig:1DSimplifiedSW_Test1_ko1_vs_ko2_Dis_vs_NoDis} compares the exact solution and the numerical approximations at time $t = 0.15$ obtained with Roe method, its second order extension based on the standard MUSCL-Hancock reconstruction, and the first and second order discontinuous in-cell reconstruction schemes based on the Roe matrix using 1000-cell mesh and CFL = 0.5: as it can be seen the standard Roe methods does not capture the discontinuities properly what is not the case for the in-cell discontinuous reconstruction methods based on the Roe structure. The results obtained with O$p$\_ExactDisRec are similar. 

\begin{figure}[h]
		\begin{subfigure}{0.5\textwidth}
			\includegraphics[width=1.1\linewidth]{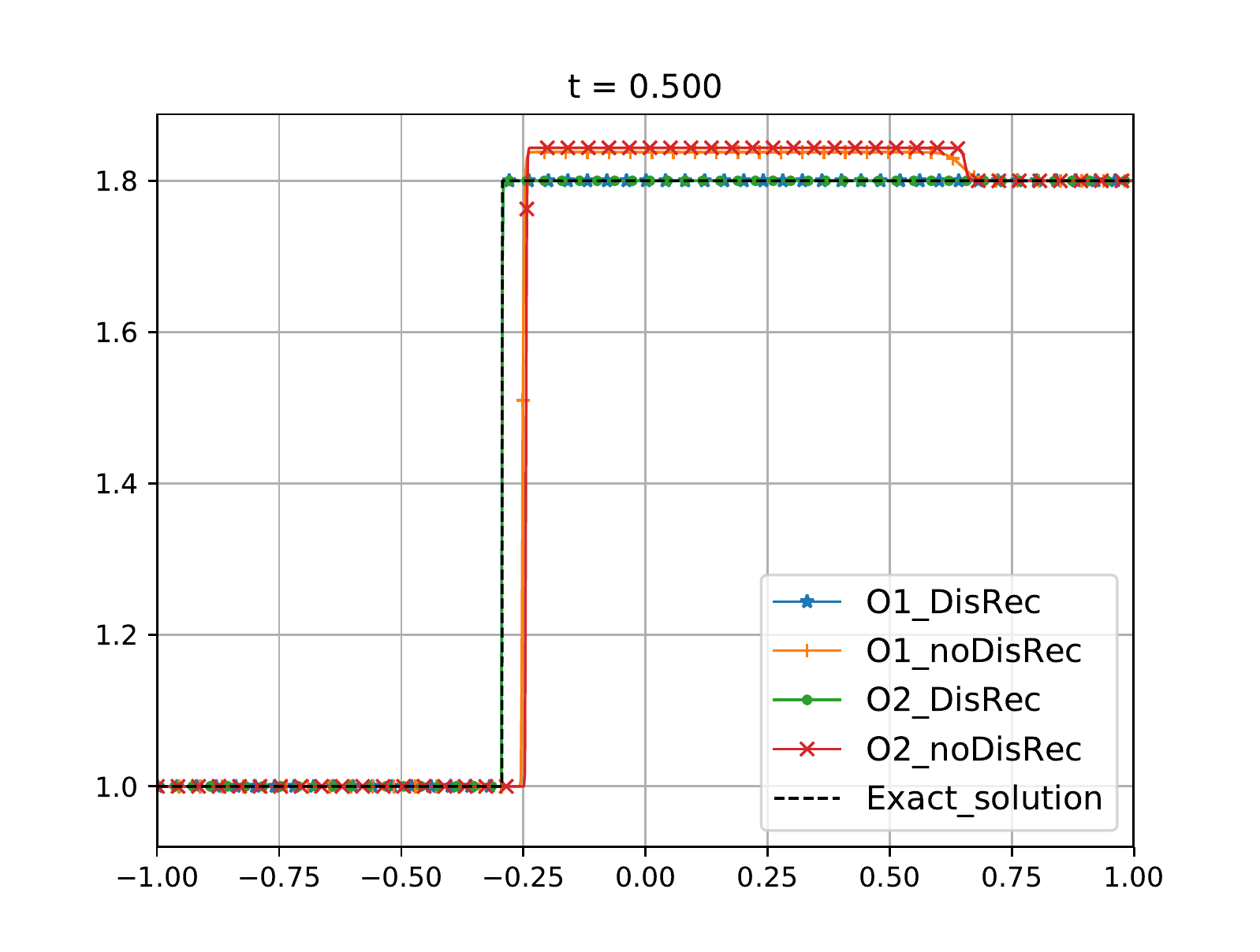}
			\caption{Variable $h$}
		\end{subfigure}
		\begin{subfigure}{0.5\textwidth}
				\includegraphics[width=1.1\linewidth]{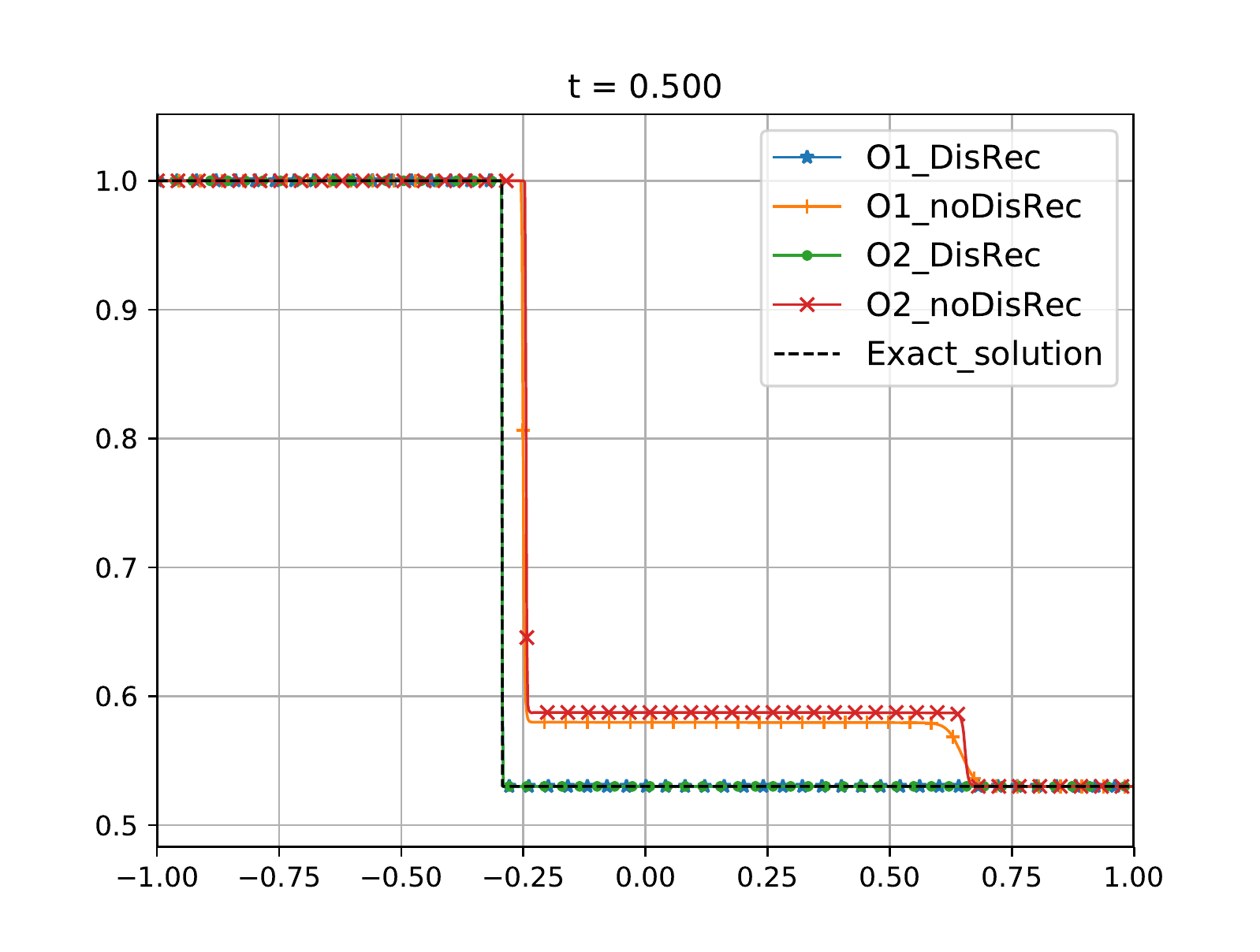}
				\caption{Variable $q$}
		\end{subfigure}
		\caption{Modified Shallow Water system. Test 1: Numerical solutions obtained withe the  first and second-order methods with and without discontinuous reconstruction based on the Roe matrix at time $t=0.5$ with 1000 cells.  Left: variable $h$. Right: variable $q$}
		\label{fig:1DSimplifiedSW_Test1_ko1_vs_ko2_Dis_vs_NoDis}
	\end{figure}

\subsubsection*{Test 2: left-moving 1-shock + right-moving 2-shock}

Let us consider the following initial condition
\begin{equation}\label{eq:1DS_Test2}
    (h, q)_{0}(x)= \begin{cases}
     (1, 1) & \text{if $x<0$,}  \\
     (1.5,0.1855893974385) & \text{otherwise.}
\end{cases} 
\end{equation}
The solution of the Riemann problem consists of a 1-shock wave with negative speed and a 2-shock with positive speed  with intermediate state $\bu_* = [1.8, 0.530039370688997]^T$. Figures \ref{fig:1DSimplifiedSW_Test2_ko1_vs_ko2_Dis_vs_NoDis_h}
and \ref{fig:1DSimplifiedSW_Test2_ko1_vs_ko2_Dis_vs_NoDis_q}  compare the exact solution with the numerical approximations at time $t = 0.15$ obtained with Roe method, its second order extension based on the standard MUSCL-Hancock reconstruction, and the first and second order discontinuous in-cell reconstruction schemes based on the Roe matrix using 1000-cell mesh and CFL = 0.5: as it can be seen none of them capture the discontinuities properly, although the ones with using in-cell discontinuous reconstruction do it better. Figures \ref{fig:1DSimplifiedSW_Test2_ko1_convergence_comparison_h} and \ref{fig:1DSimplifiedSW_Test2_ko1_convergence_comparison_q} show the numerical solutions obtained with the first-order method with discontinuous reconstruction based on the Roe matrix at time $t=0.15$ using different cell meshes: as we can see the numerical solutions seem to converge to the exact solution when $\Delta x \rightarrow 0$. In Figure \ref{fig:1DSimplifiedSW_Test2_ko1_vs_ko2_ExactDis} the results given by the first and second order in-cell discontinuous schemes based in the exact solution of the Riemann problem are shown: we observe that both of them capture exactly the two shocks.


\begin{figure}[h]
		\begin{subfigure}{0.5\textwidth}
			\includegraphics[width=1.1\linewidth]{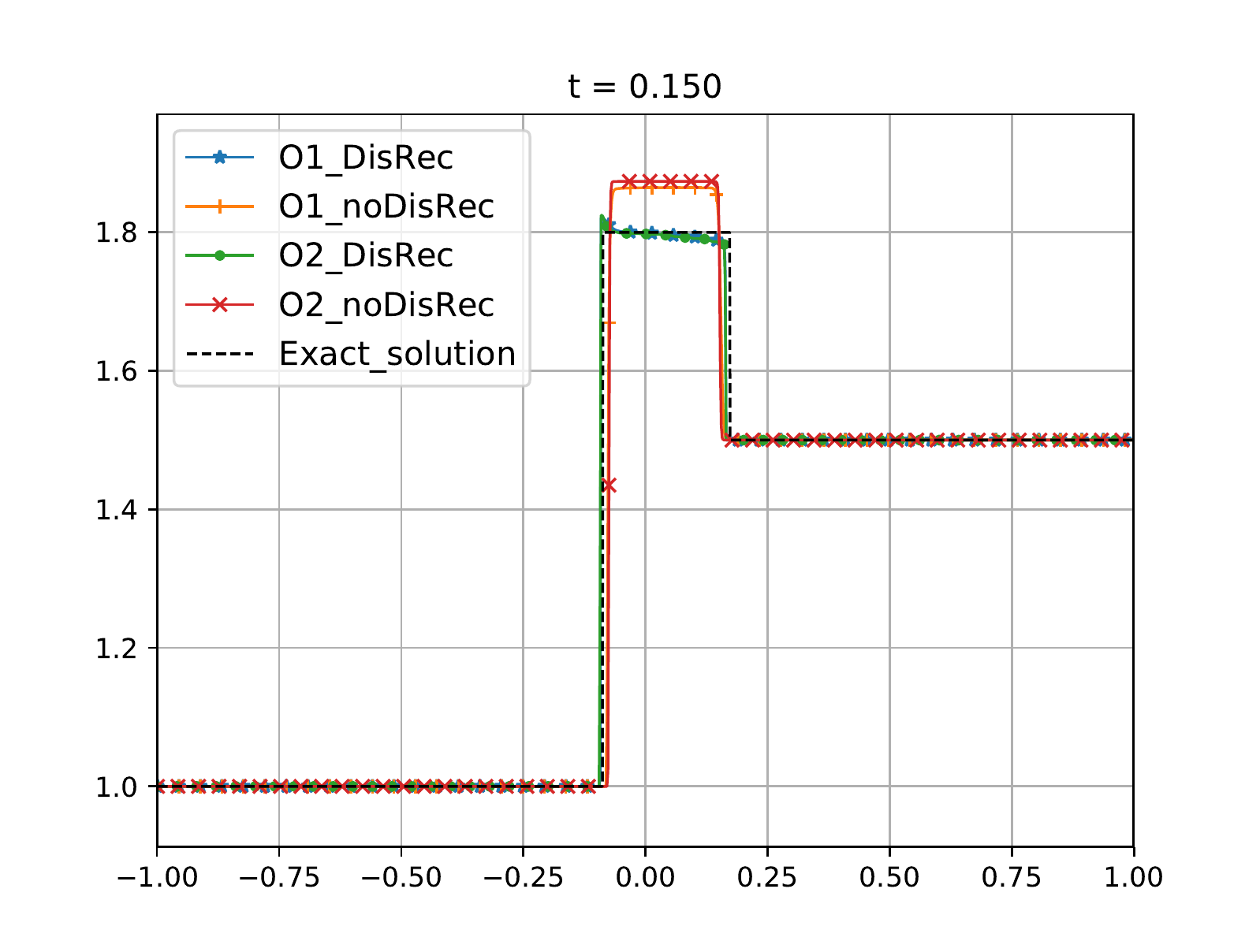}
		\end{subfigure}
		\begin{subfigure}{0.5\textwidth}
				\includegraphics[width=1.1\linewidth]{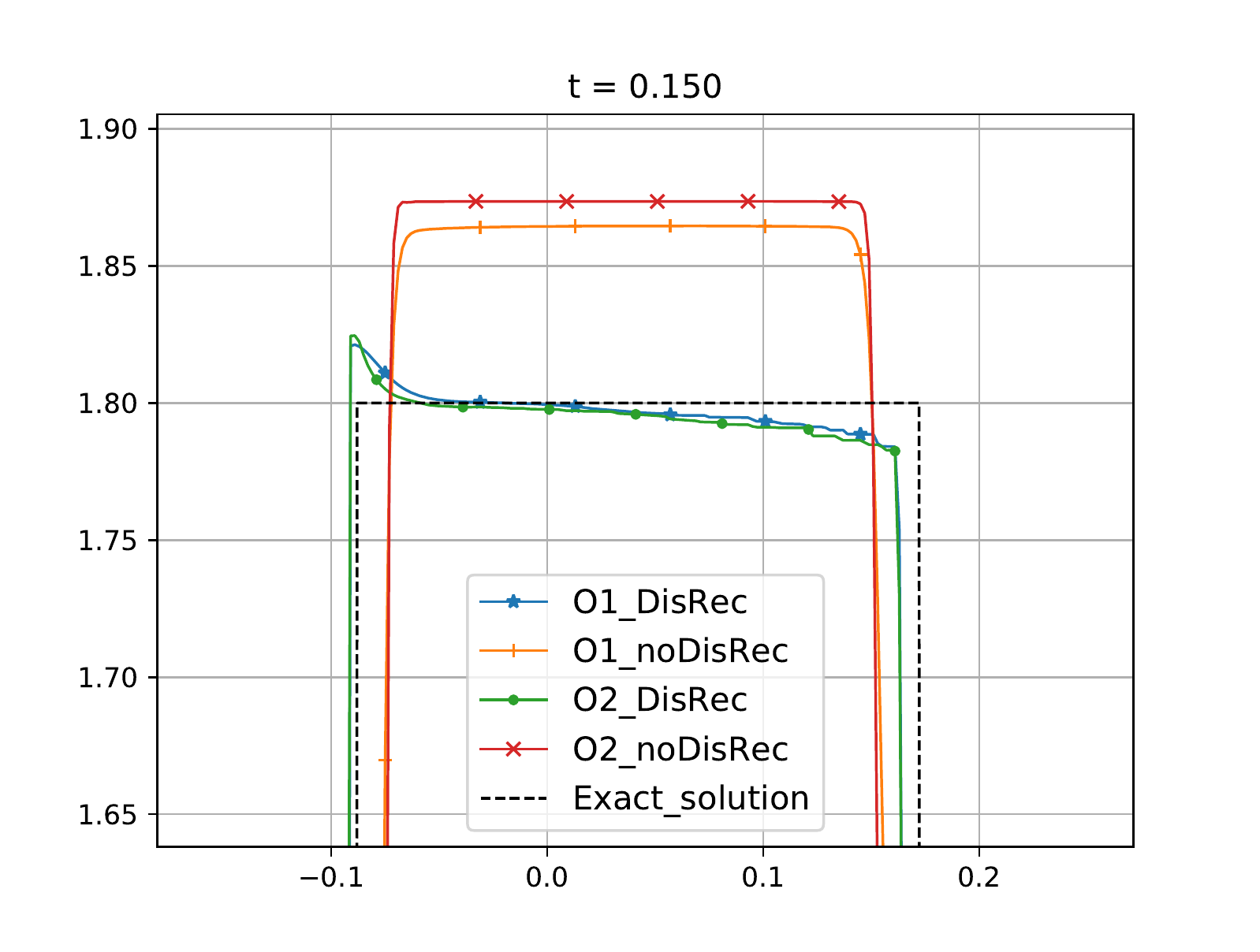}
				\caption{Zoom}
		\end{subfigure}
		\caption{Modified Shallow Water system. Test 2: variable $h$. Left: Numerical solutions obtained with the  first and second-order methods with and without discontinuous reconstruction based on the Roe matrix at time $t=0.15$ with 1000 cells. Right: zoom}
		\label{fig:1DSimplifiedSW_Test2_ko1_vs_ko2_Dis_vs_NoDis_h}
	\end{figure}
	
\begin{figure}[h]
		\begin{subfigure}{0.5\textwidth}
			\includegraphics[width=1.1\linewidth]{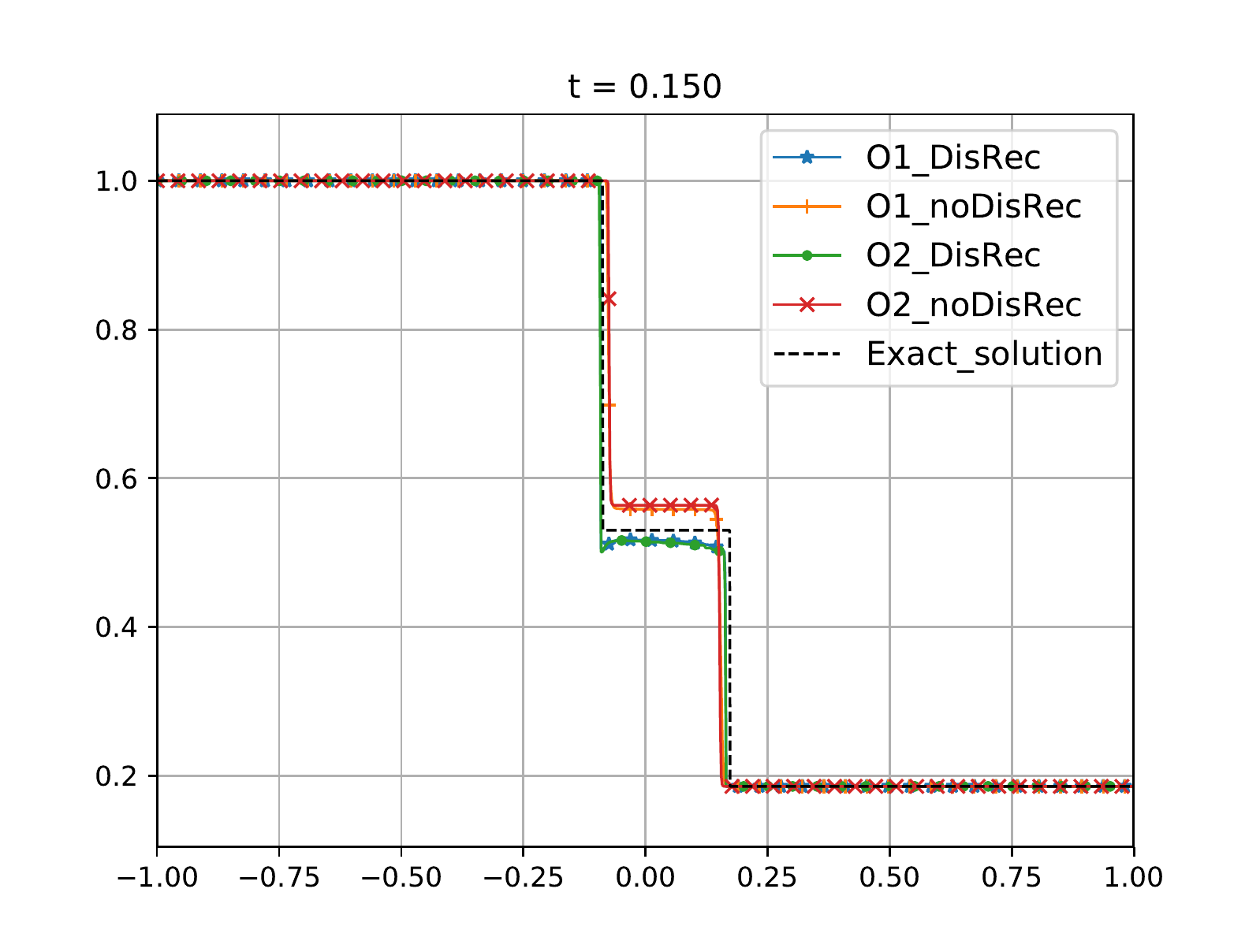}
		\end{subfigure}
		\begin{subfigure}{0.5\textwidth}
				\includegraphics[width=1.1\linewidth]{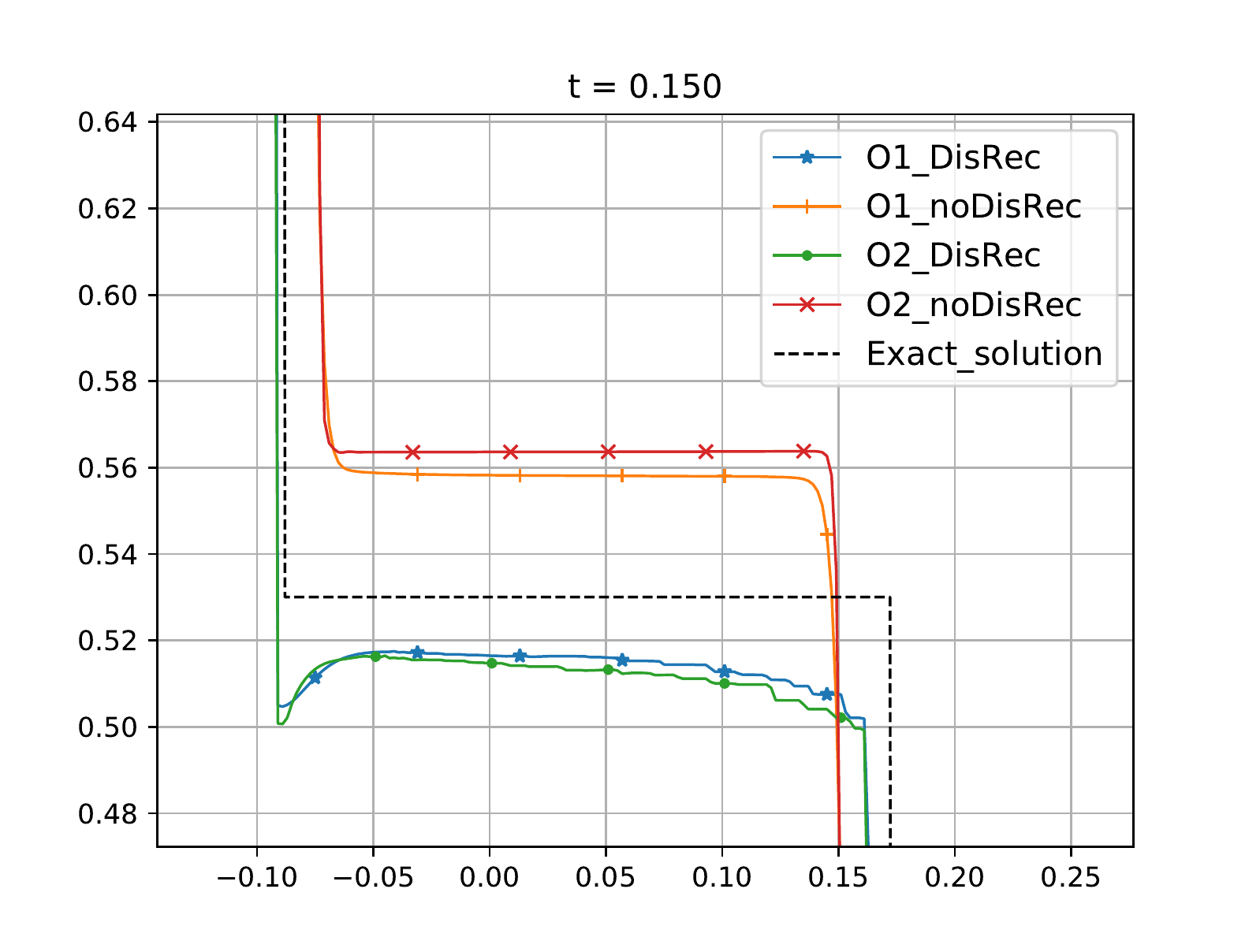}
				\caption{Zoom}
		\end{subfigure}
		\caption{Modified Shallow Water system. Test 2: variable $q$. Left: Numerical solutions obtained with the  first and second-order methods with and without discontinuous reconstruction based on the Roe matrix at time $t=0.15$ with 1000 cells. Right: zoom}
		\label{fig:1DSimplifiedSW_Test2_ko1_vs_ko2_Dis_vs_NoDis_q}
	\end{figure}

\begin{figure}[h]
		\begin{subfigure}{0.5\textwidth}
			\includegraphics[width=1.1\linewidth]{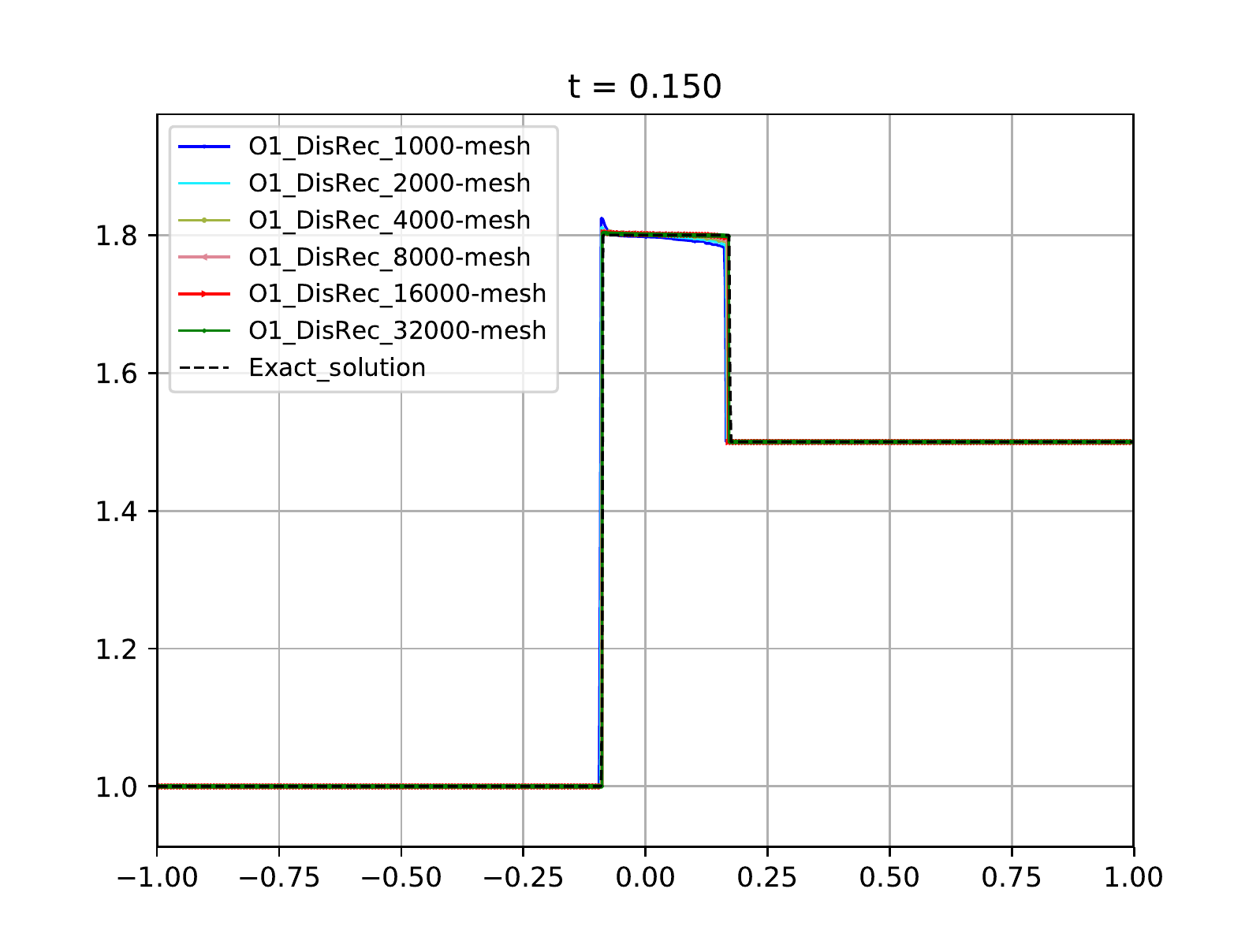}
		\end{subfigure}
		\begin{subfigure}{0.5\textwidth}
				\includegraphics[width=1.1\linewidth]{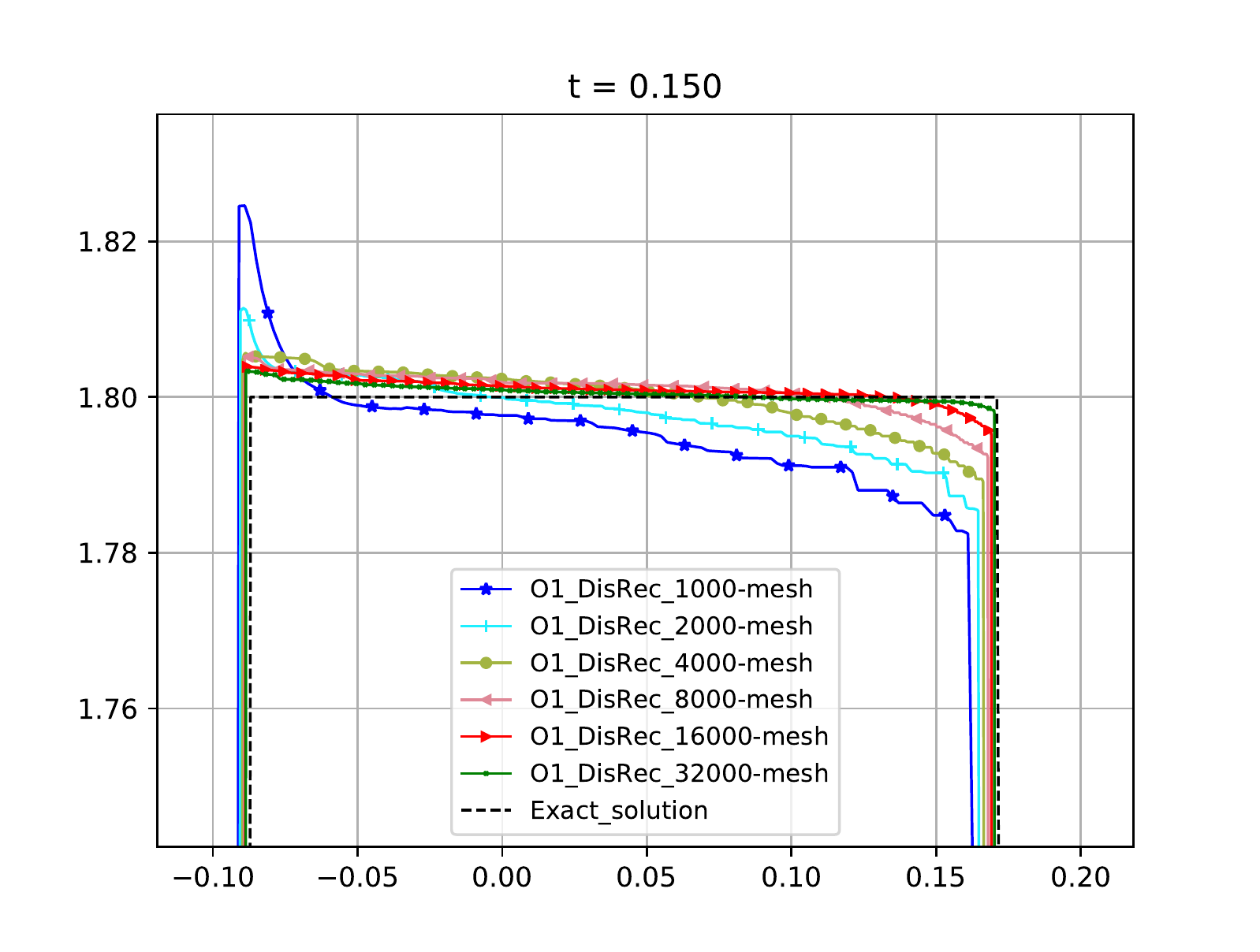}
				\caption{Zoom}
		\end{subfigure}
		\caption{Modified Shallow Water system. Test 2: variable $h$. Left: Numerical solutions obtained with the first-order methods with discontinuous reconstruction based on the Roe matrix at time $t=0.15$ with different cell meshes. Right: zoom}
		\label{fig:1DSimplifiedSW_Test2_ko1_convergence_comparison_h}
	\end{figure}
	
\begin{figure}[h]
		\begin{subfigure}{0.5\textwidth}
			\includegraphics[width=1.1\linewidth]{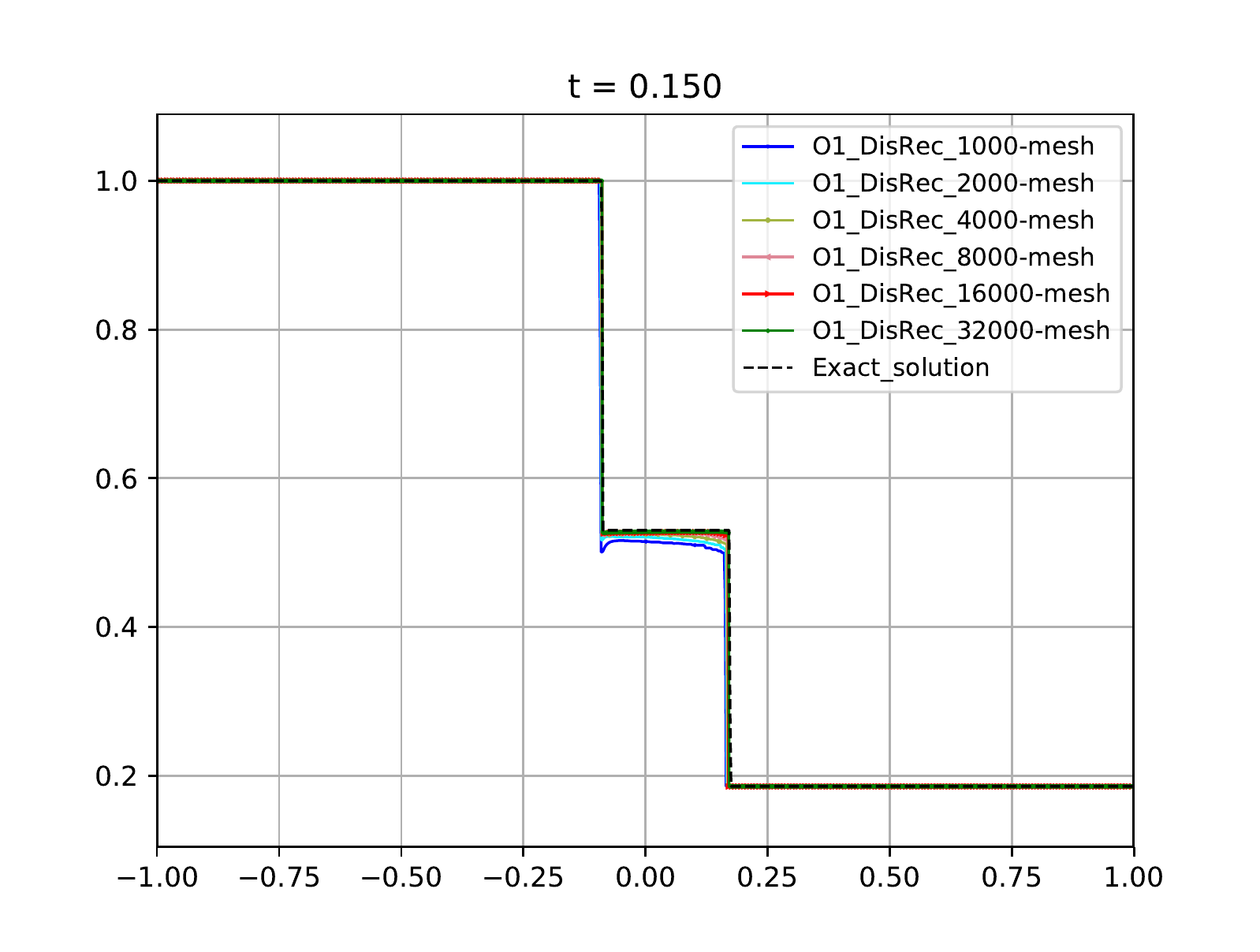}
		\end{subfigure}
		\begin{subfigure}{0.5\textwidth}
				\includegraphics[width=1.1\linewidth]{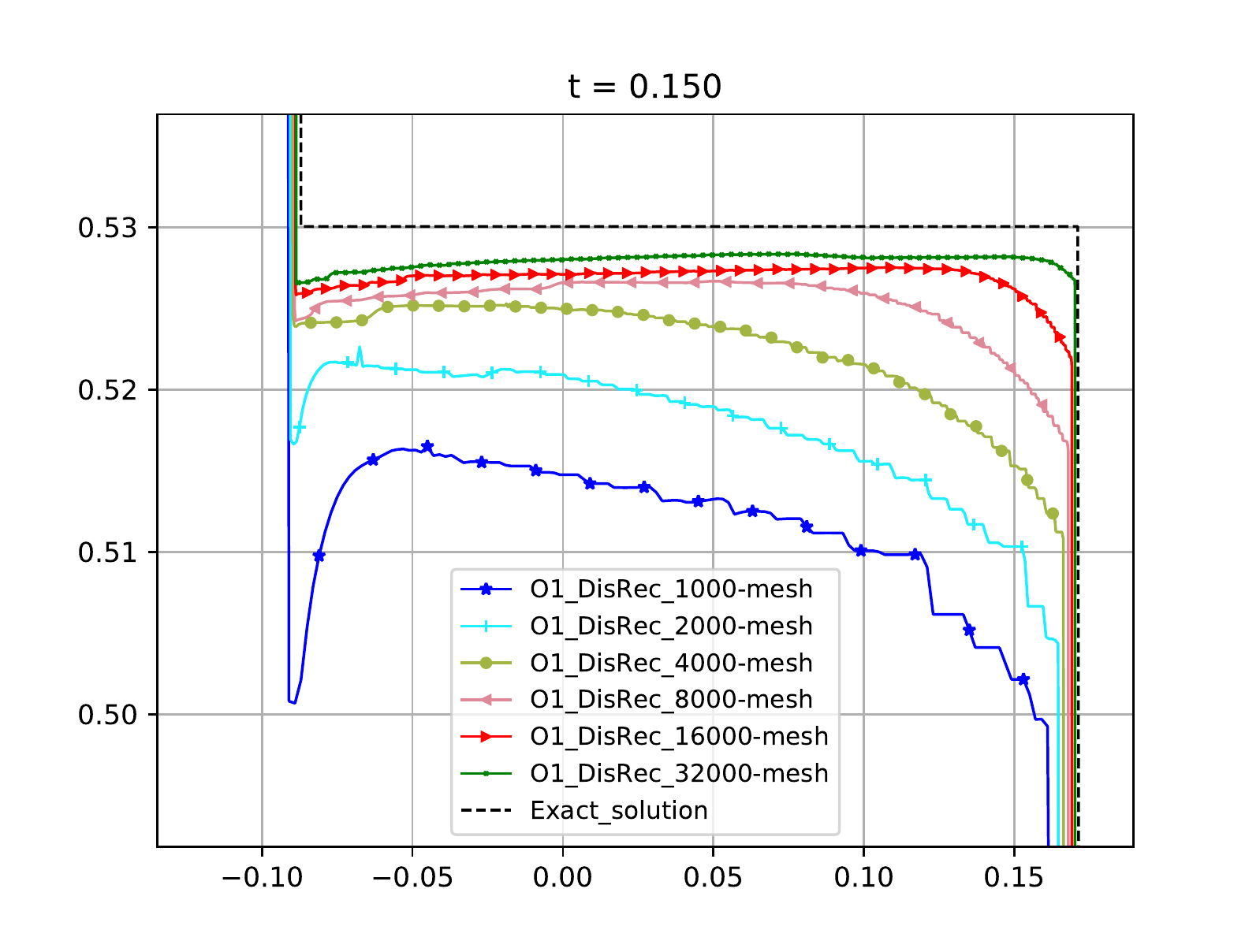}
				\caption{Zoom}
		\end{subfigure}
		\caption{Modified Shallow Water system. Test 2: variable $q$. Left: Numerical solutions obtained with the first-order methods with discontinuous reconstruction based on the Roe matrix at time $t=0.15$ with different cell meshes. Right: zoom}
		\label{fig:1DSimplifiedSW_Test2_ko1_convergence_comparison_q}
	\end{figure}
	
\begin{figure}[h]
		\begin{subfigure}{0.5\textwidth}
			\includegraphics[width=1.1\linewidth]{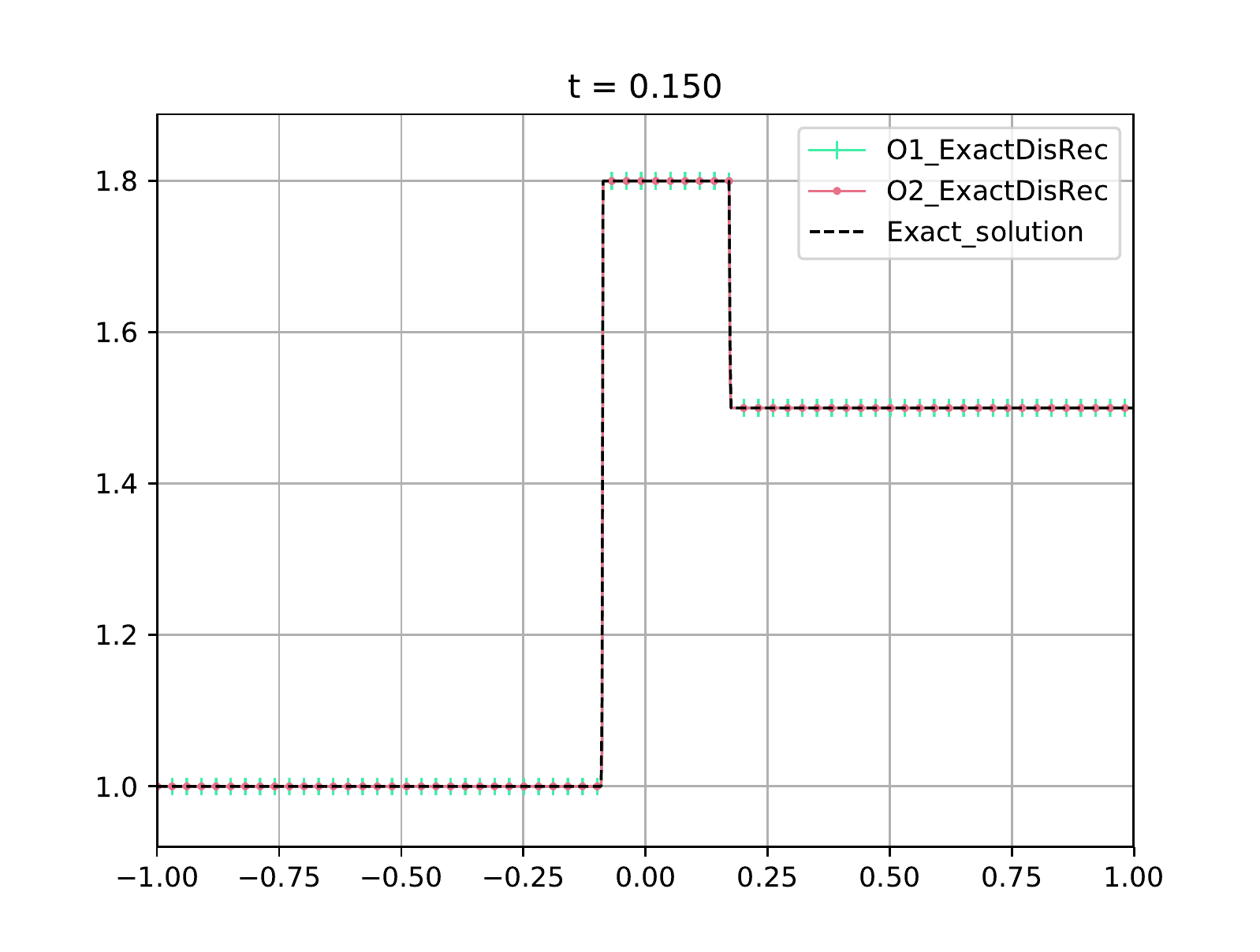}
			\caption{Variable $h$}
		\end{subfigure}
		\begin{subfigure}{0.5\textwidth}
				\includegraphics[width=1.1\linewidth]{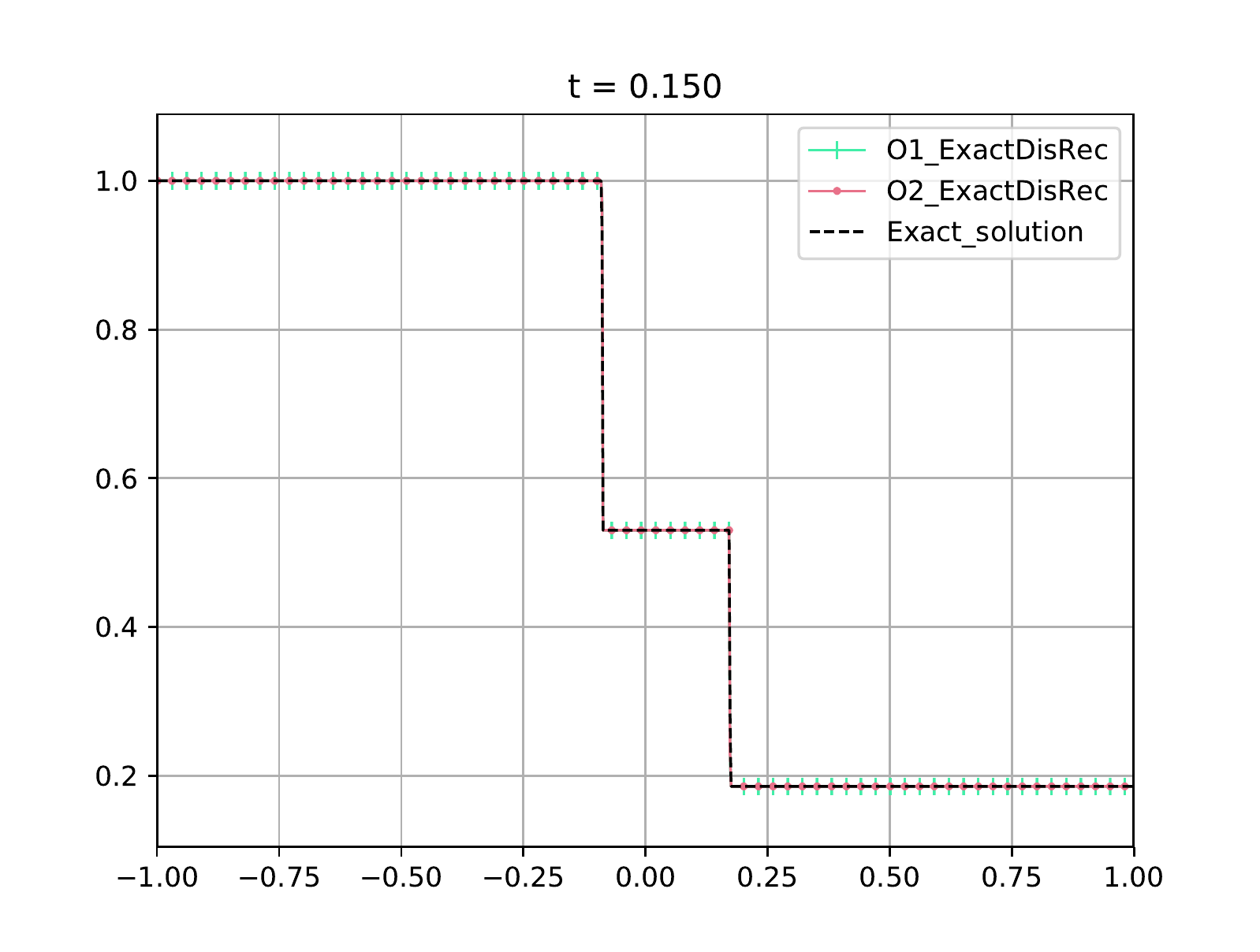}
				\caption{Variable $q$}
		\end{subfigure}
		\caption{Modified Shallow Water system. Test 2: Numerical solutions obtained with the  first and second-order methods with discontinuous reconstruction based on the exact solutions of the Riemann problems at time $t=0.15$ with 1000 cells. Left : variable $h$. Right: variable $q$.}
		\label{fig:1DSimplifiedSW_Test2_ko1_vs_ko2_ExactDis}
	\end{figure}

\subsubsection*{Test 3: right-moving 1-shock + right-moving 2-shock}\label{ssstest3}

Let us consider the following initial condition

\begin{equation}\label{eq:1DS_Test3}
    (h,q)_{0}(x)= \begin{cases}
     (1, 1) & \text{if $x<0$,}  \\
     (5,2.86423084288) & \text{otherwise.}
\end{cases} 
\end{equation}
The solution of the Riemann problem consists of a 1-shock and a 2-shock waves with positive speed and intermediate state $\bu_* = [1.5, 5.96906891076]^T$. Figures \ref{fig:1DSimplifiedSW_Test3_ko1_vs_ko2_Dis_vs_NoDis_h} and \ref{fig:1DSimplifiedSW_Test3_ko1_vs_ko2_Dis_vs_NoDis_q} show the exact solution and the numerical approximations at time $t = 0.06$ obtained with Roe method, its second order extension based on the standard MUSCL-Hancock reconstruction, and the first and second order discontinuous in-cell reconstruction schemes based on the Roe structure using 1000-cell mesh and CFL = 0.5: as in the previous test case, the in-cell discontinuous reconstruction captures the shocks and intermediate state much better than the standard first and second order Roe methods. In Figure \ref{fig:1DSimplifiedSW_Test3_ko1_vs_ko2_ExactDis} the results given by the first and second order in-cell discontinuous schemes based in the exact solution of the Riemann problems are shown:  both of them capture exactly the  exact solution.

\begin{figure}[h]
		\begin{subfigure}{0.5\textwidth}
			\includegraphics[width=1.1\linewidth]{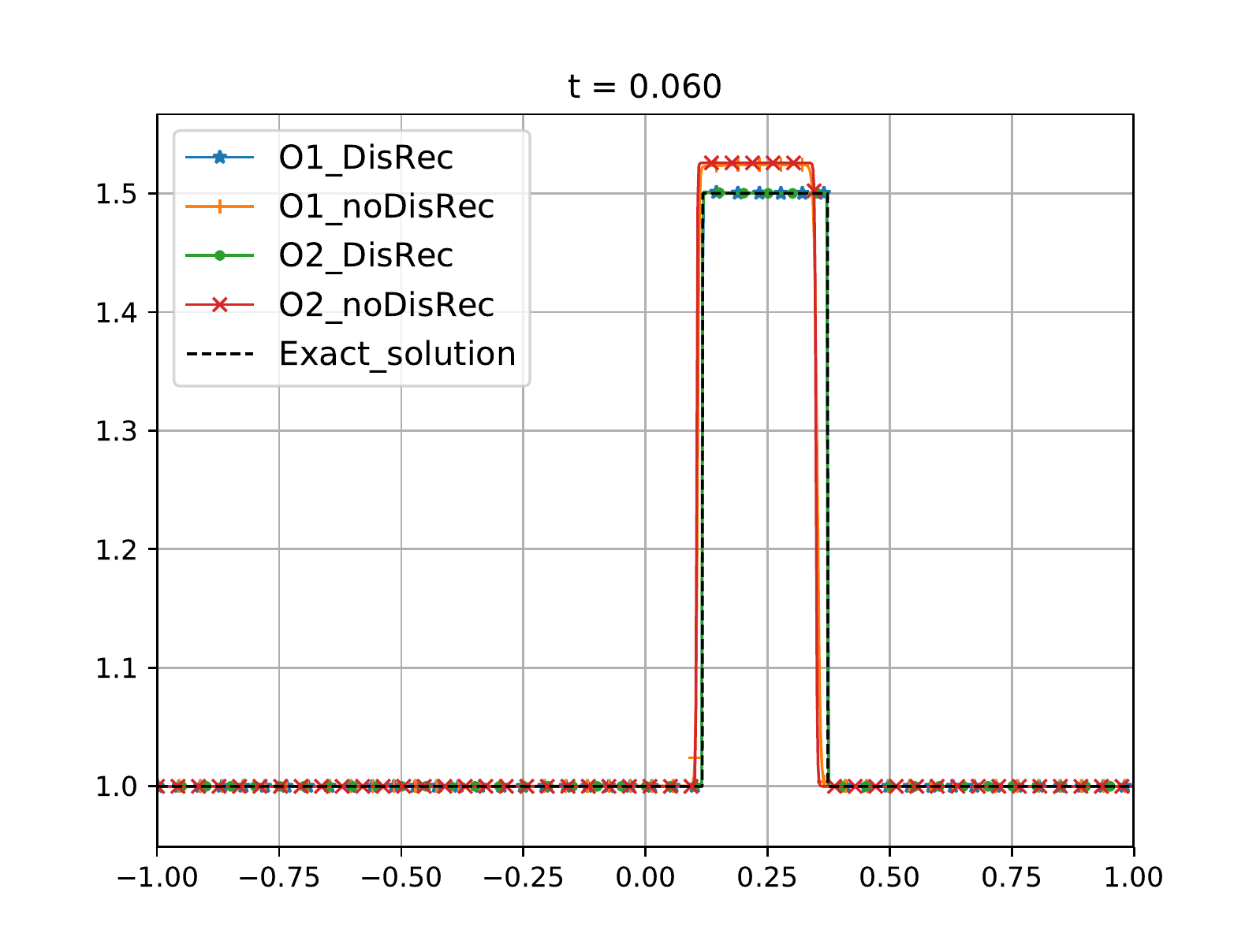}
		\end{subfigure}
		\begin{subfigure}{0.5\textwidth}
				\includegraphics[width=1.1\linewidth]{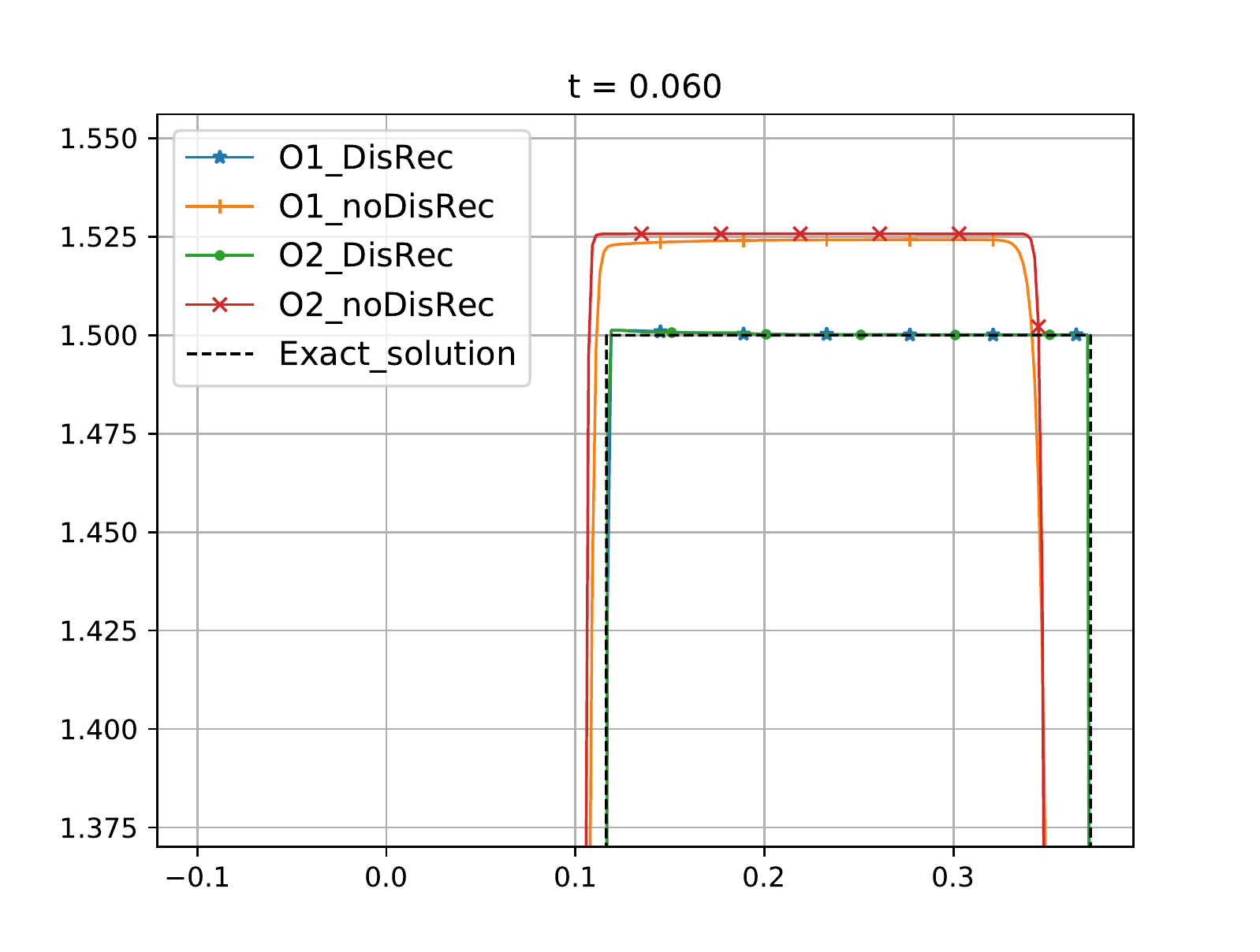}
				\caption{Zoom}
		\end{subfigure}
		\caption{Modified Shallow Water system. Test 3: variable $h$. Left: Numerical solutions obtained with the  first and second-order methods with and without discontinuous reconstruction based on the Roe matrix at time $t=0.06$ with 1000 cells. Right: zoom}
		\label{fig:1DSimplifiedSW_Test3_ko1_vs_ko2_Dis_vs_NoDis_h}
	\end{figure}
	
\begin{figure}[h]
		\begin{subfigure}{0.5\textwidth}
			\includegraphics[width=1.1\linewidth]{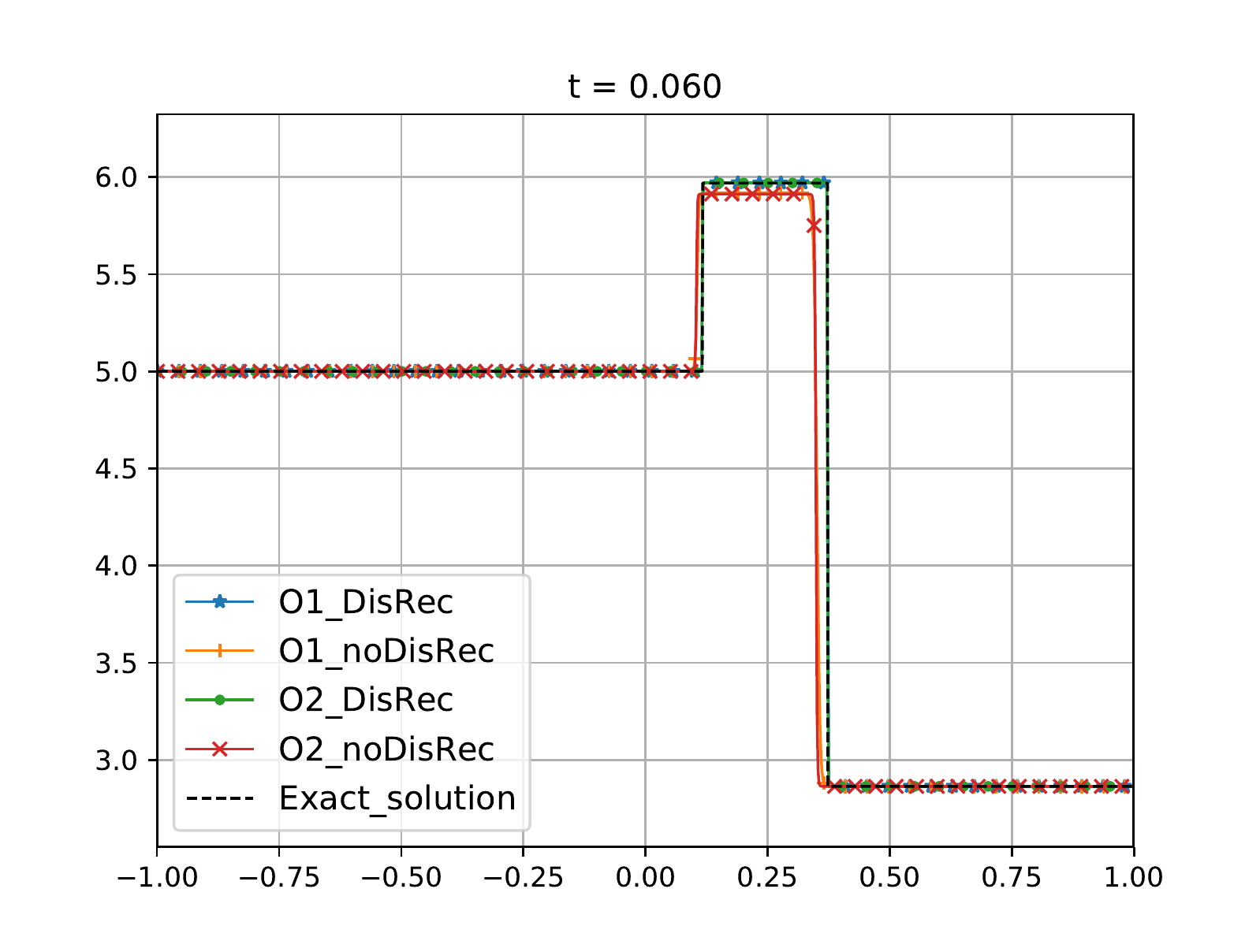}
		\end{subfigure}
		\begin{subfigure}{0.5\textwidth}
				\includegraphics[width=1.1\linewidth]{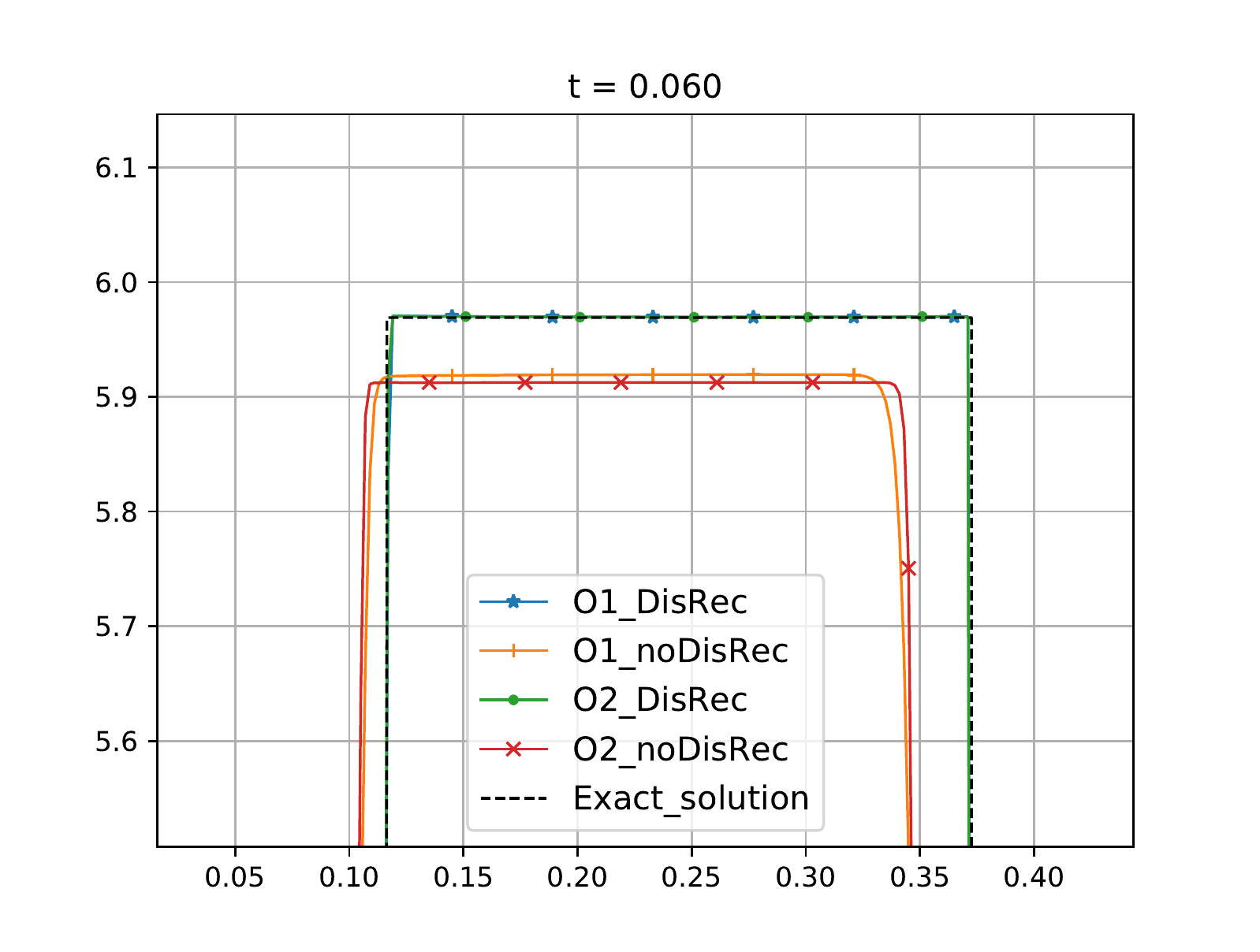}
				\caption{Zoom}
		\end{subfigure}
		\caption{Modified Shallow Water system. Test 3: variable $q$. Left: Numerical solutions obtained with the  first and second-order methods with and without discontinuous reconstruction based on the Roe matrix at time $t=0.06$ with 1000 cells. Right: zoom}
		\label{fig:1DSimplifiedSW_Test3_ko1_vs_ko2_Dis_vs_NoDis_q}
	\end{figure}
	
\begin{figure}[h]
		\begin{subfigure}{0.5\textwidth}
			\includegraphics[width=1.1\linewidth]{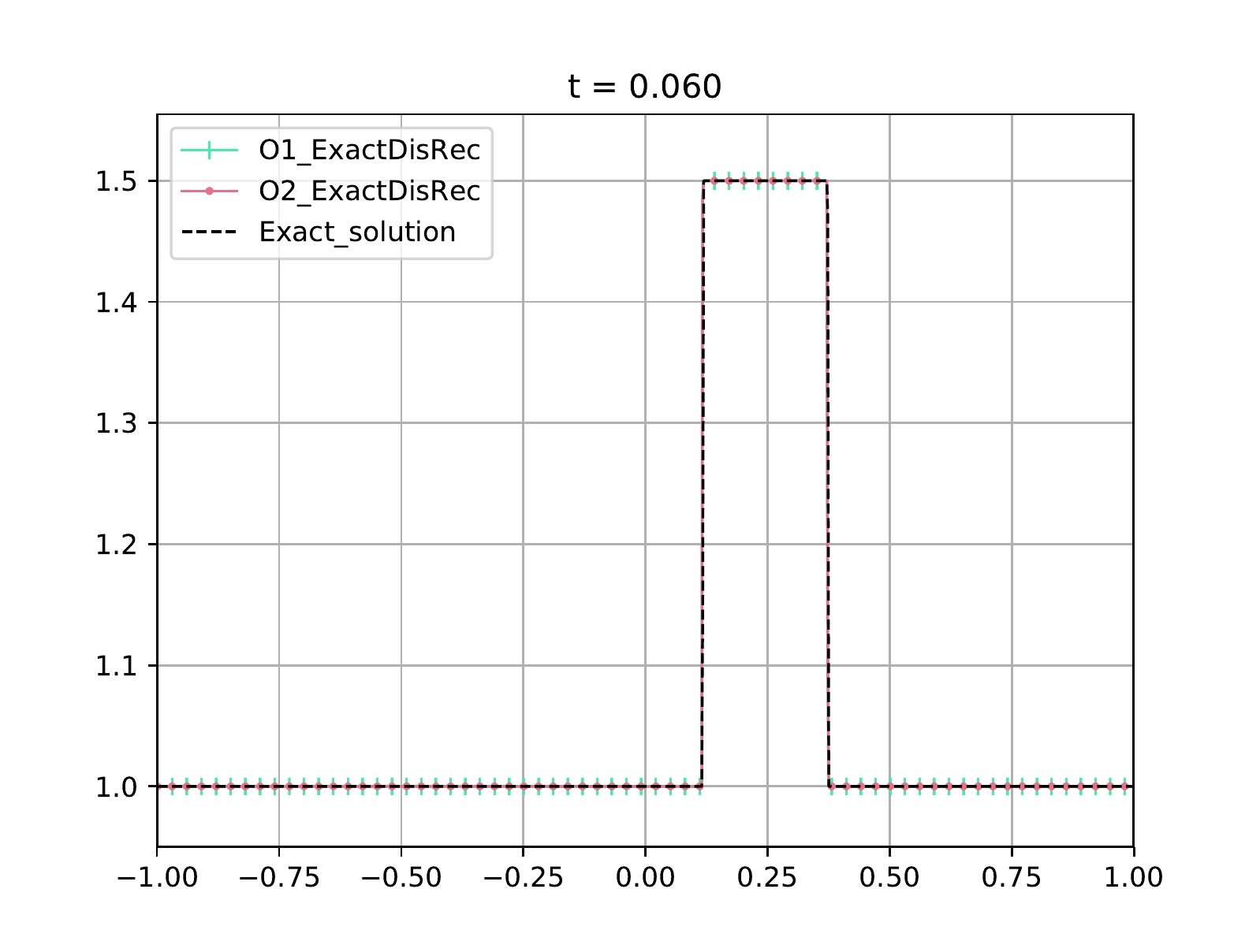}
			\caption{Variable $h$}
		\end{subfigure}
		\begin{subfigure}{0.5\textwidth}
				\includegraphics[width=1.1\linewidth]{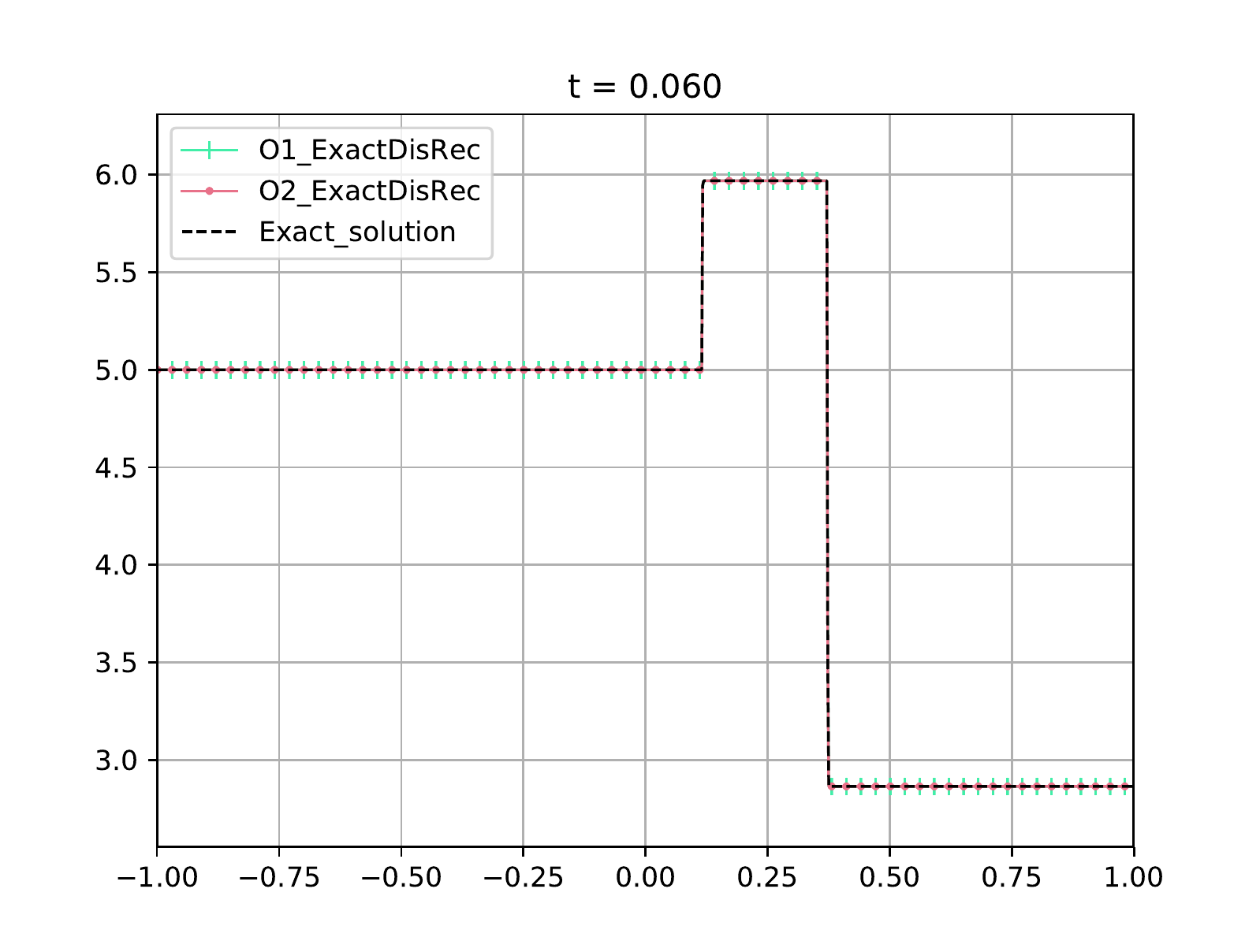}
				\caption{Variable $q$}
		\end{subfigure}
		\caption{Modified Shallow Water system. Test 3: Numerical solutions obtained with the  first and second-order methods with discontinuous reconstruction based on the exact solutions of the Riemann problems at time $t=0.06$ with 1000 cells.  Left: variable $h$. Right: variable $q$.}
		\label{fig:1DSimplifiedSW_Test3_ko1_vs_ko2_ExactDis}
	\end{figure}

\section{Conclusions}
In this paper, an extension to second-order accuracy of the in-cell discontinuous reconstruction methods introduced in \cite{chalons2019path} is presented: it has been compared with the first-order one using several numerical tests. We observe, as expected, an improvement in the smooth parts of the solutions. The isolated shock-capturing property is enunciated, proved and tested. In the presence of more than one shock we have used two different strategies: one based on the linearized Riemann problem when a Roe matrix is available, and another one based on the exact Riemann problems when the solution is explicitly known. We have observed that the strategy based on the Roe matrix can fail when the intermediate states appearing in the solution of the linearized Riemann problem do not coincide with the exact intermediate states appearing in the solution of the exact Riemann problem. The only important part in the in-cell reconstruction procedure is to know the exact intermediate states, so it is not necessary to know the entire structure of the exact Riemann problem. The extension to high-order accuracy can be done through the Taylor expansion and the application of the Cauchy-Kovalevski procedure. The idea behind the exact in-cell reconstruction used for capturing the two shocks for the simplified shallow water model can be extended for models with more waves appearing in their Riemann problems.

\section*{Acknowledgments}
The  research  of CP, MC  and  EPG  was  partially  supported  by  the  Spanish  Government(SG),  the  European  Regional  Development Fund(ERDF), the Regional Government of Andalusia(RGA), and the University of M\'alaga(UMA) through the projects  of  reference  RTI2018-096064-B-C21  (SG-ERDF),  UMA18-Federja-161  (RGA-ERDF-UMA), and P18-RT-3163 (RGA-ERDF). EPG was also financed by the Junior Scientific Visibility Program from the Foundation Math\'ematiques Jacques Hadamard for a stay of three month in the Laboratoire de Math\'ematiques de Versailles (LMV).
TML was supported by the Spanish  Government(SG) through the projects  of  reference RTI2018-096064-B-C22.

\clearpage

\bibliographystyle{abbrv}

\bibliography{ref_corrected}

\end{document}